\newtheorem{theorem}{Theorem}[section]
\newtheorem{corollary}[theorem]{Corollary}
\newtheorem{lemma}[theorem]{Lemma}
\newtheorem{proposition}[theorem]{Proposition}
\newtheorem*{meta}{Metatheorem}
\theoremstyle{definition}
\newtheorem{definition}[theorem]{Definition}
\newtheorem{remark}{Remark}
\DeclareMathOperator{\dist}{dist}
\DeclareMathOperator{\dv}{div}
\DeclareMathOperator\supp{supp}
\DeclareMathOperator\inter{int}
\def\N{\mathbb{N}}
\def\Z{\mathbb{Z}}
\def\R{\mathbb{R}}
\let\O=\Omega
\let\e=\varepsilon
\let\vt=\vartheta
\let\t=\tilde
\let\ol=\overline
\let\ul=\underline
\let\mc=\mathcal
\let\di=\displaystyle
\def\1{\mathbbm{1}}
\def\pp{,\dots,}
\def\Sph{S^{N-1}}
\def\pe{principal eigenvalue}
\def\MP{maximum principle}
\def\SMP{strong maximum principle}
\newcommand{\su}[2]{\genfrac{}{}{0pt}{}{#1}{#2}}
\def\thm#1{Theorem~\ref{thm:#1}}
\def\seq#1{(#1_n)_{n\in\N}}
\def\limn{\lim_{n\to\infty}}
\def\as{\quad\text{as }\;}
\newenvironment{formula}[1]{\begin{equation}\label{#1}}
                       {\end{equation}\noindent}
\def\Fi#1{\begin{formula}{#1}}
\def\Ff{\end{formula}\noindent}
\def\PTF{pulsating travelling front}
\def\sub{asymptotic subset of spreading}
\def\super{asymptotic superset of spreading}
\def\ass{asymptotic speed of spreading}
\def\W{\mc{W}}
\title{\bf The Freidlin-G\"artner formula\\ for general reaction terms}
\author{Luca {\sc Rossi}
\thanks{CNRS, Ecole des Hautes Etudes en Sciences Sociales, PSL Research University,  
	Centre d'Analyse et Math\'ematiques Sociales, 190-198 avenue de France 
	F-75244 Paris Cedex 13, France}\\
}
\date{}
\begin{document}

\maketitle

\vspace{-15pt}

\begin{abstract}
We devise a new geometric approach to study the propagation of disturbance - compactly supported data - in reaction diffusion equations. The method builds a bridge between the propagation of disturbance and of
almost planar solutions.
It applies to very general reaction-diffusion equations.
The main consequences we derive in this paper are: 
a new proof of the classical Freidlin-G\"artner formula for the asymptotic speed of 
spreading for periodic Fisher-KPP equations, extension of the formula to 
the monostable, combustion and bistable cases, existence of the 
\ass\ for equations with almost periodic temporal dependence, 
derivation of multilevel propagation for
multistable equations.
%
\end{abstract}

\vspace{-10pt}



\section{Introduction}

We deal with the reaction-diffusion equation
\Fi{future}
\partial_t u=\dv(A(x)\nabla u)+q(x)\.\nabla u + f(x,u),\quad t>0,\ x\in\R^N.
\Ff
This type of equation models a huge variety of phenomena in biology, 
chemistry, physics and  social sciences, 
such as population dynamics, gene diffusion, combustion, flame propagation, 
spread of epidemics. In applications, one typically considers the Cauchy problem 
with compactly supported initial data. 
Assuming that $0$ and $1$ are two steady states, $1$ being attractive, a natural question is: at which speed
does the set where solutions are close to $1$ spread?
To formulate this question in a precise way one introduces the notion of the
{\em \ass}: for a given direction
$\xi\in\Sph$, this is a quantity $w(\xi)$ such that the solution $u$ to
\eqref{future} emerging from a compactly supported
initial datum $u_0\geq0,\not\equiv0$ satisfies
\Fi{c>w}
\forall c>w(\xi),\quad
u(t,x+ct\xi)\to0 \quad\text{as }t\to+\infty,
\Ff
\Fi{c<w}
\forall 0\leq c<w(\xi),\quad
u(t,x+ct\xi)\to1 \quad\text{as }t\to+\infty,
\Ff
locally uniformly in $x\in\R^N$. 

Even assuming that $u\to1$ locally uniformly as 
$t\to+\infty$, it is not obvious that such a quantity $w(\xi)$ exists and 
that it does not depend on the initial datum~$u_0$.
These~properties are proved by Aronson-Weinberger \cite{AW} in the case of
the equation $\partial_t u-\Delta u=f(u)$. There, of course, 
$w$ is independent of $\xi$.
Using large deviation probabilistic techniques, Freidlin and G\"artner
extend the result in \cite{FG,Freidlin} to equation \eqref{future} under the
assumption that $A,q,f$ are periodic in $x$ and that 
$0<f(x,u)\leq \partial_u f(x,0)u$ for $0<u<1$. The latter is known as Fisher-KPP
condition. The authors obtain the following formula for the speed of spreading
$$
w(\xi):=\min_{\su{z\in\R^N}{z\.\xi>0}}\frac{k(z)}{z\.\xi},
$$
where $k(z)$ is the periodic \pe\ of the linear operator
$$L_z:=\dv(A\nabla)-2z\.A\nabla+q\.z+
\big(-\dv(Az)-q\.z+z\.Az+\partial_u f(x,0)\big).$$
Several years later, in \cite{BHN} (see also \cite{W02}), it has been shown 
that, for given $e\in\Sph$, the quantity
$c^*(e):= \min_{\lambda>0}k(\lambda e)/\lambda$
coincides with the critical (or minimal) speed of \PTF s in the
direction~$e$ (see Section \ref{intro:periodic} for the definition). Therefore, 
Freidlin-G\"artner's formula can be rewritten as
\Fi{FG}
w(\xi)=\min_{e\.\xi>0}\frac{c^*(e)}{e\.\xi}.
\Ff
Namely, $w(\xi)$ is the
minimizer of the speed in the direction $\xi$ among all the fronts, even 
those in directions $e\neq\xi$.

Pulsating travelling fronts exist not only in  the Fisher-KPP case, but also for
other classes of reaction terms, though their critical speed no longer 
fulfils 
the
previous eigenvalue representation. Then one might wonder if the 
formula \eqref{FG} holds true beyond the Fisher-KPP case.
In the present paper, using a new PDE approach, we show that this is always the case,
whenever \PTF s are known to 
exist: monostable, combustion and bistable equations. We point out that in the latter two
cases, where $f(x,u)$ is nonpositive in a neighbourhood of $u=0$, $c^*(e)$ is the unique
speed for which a \PTF\ in the direction $e$ exists. 
\\


The spreading properties for heterogeneous -~in particular 
periodic~- reaction-diffusion equations have been widely studied in the 
literature, with other approaches than the probabilistic one of \cite{FG}. One 
is the viscosity solutions/singular perturbations method of 
Evans-Souganidis~\cite{ES} for the Fisher-KPP equation and 
Barles, Soner and Souganidis \cite{BSShom,BShom} for the bistable equation.
There the authors characterise the 
asymptotic propagation of solutions in terms of the evolution of a set governed by a 
Hamilton-Jacobi equation.
An abstract monotone system approach relying on a discrete time-steps 
formalism is used in Weinberger~\cite{W02}. It provides a general spreading
result for monostable, combustion and bistable periodic equations without using
the existence of
\PTF s. Then, in the monostable case, 
i.e., when~$f(x,\.)$ is assumed to be positive in $(0,1)$, the method 
itself allows the 
author to show the existence of pulsating fronts and to derive the 
Freidlin-G\"artner formula~\eqref{FG} in such case. Instead, in the combustion 
or bistable cases, \cite[Theorems~2.1 and~2.2]{W02} assert the existence of the 
spreading speed but do not relate it with the speeds of pulsating fronts.
Finally, a PDE approach is adopted 
by Berestycki-Hamel-Nadin~\cite{BHNadin}. 
This yields the Freidlin-G\"artner formula in the Fisher-KPP case and partially 
extend it to equations with general space-time dependent
coefficients. 
\\

Let us describe our method. Property \eqref{c>w} with $w(\xi)$ given by
\eqref{FG} is essentially a direct consequence of the comparison
principle between $u$ and  the critical \PTF s in all directions $e$ satisfying
$e\.\xi>0$. A bit of work is however required in order to handle initial data
which are not strictly less than $1$ and also because we aim to a
uniform version of \eqref{c>w}. The real novelty of this paper consists in the
derivation of \eqref{c<w}. The reason why this property is harder to obtain than
\eqref{c>w}
can be explained in the following way: a solution $u$ emerging from a compactly
supported initial datum has bounded upper level sets at any time, whereas
the upper level sets of a front contain a half-space.
This is why one can manage to bound~$u$ from above by a suitable
translation of any travelling front and eventually get \eqref{c>w}, but cannot
bound $u$ from below by a front in order to get \eqref{c<w}. 
Nevertheless, assuming that $u$ converges locally uniformly to $1$ as 
$t\to+\infty$, 
its upper level sets eventually contain arbitrarily large 
portions of half-spaces, and thus it will be possible to put 
%
some front below the limit of translations of $u$ by $\{(t_n,x_n)\}$
with $t_n\to+\infty$.
So, supposing by way of contradiction that \eqref{c<w} does not hold, the 
key is to find a sequence of translations of $u$, by a suitable $\{(t_n,x_n)\}$,
whose limit propagates with an {\em average speed} slower than a front.
%
%
%
%
%
To achieve this, we need to deal with all directions of spreading 
simultaneously, by considering the Wulff shape of the speeds. As a by-product, 
we obtain \eqref{c>w}-\eqref{c<w} uniformly with
respect to $(\xi,c)$.

Let us point out that, unlike in the singular perturbation approach,
we just consider translations of the original equations, without any rescaling. 
One of the advantages is that the equation we obtain in the limit keeps the same form as the original one, 
in particular the uniform ellipticity. 
Another difference with the singular perturbation approach is that,
roughly speaking, the latter 
makes use of the travelling fronts for the original equation in order to obtain 
the evolution equation for level sets in the limit, whereas 
our method works the other way around: we start with the analysis of
 the motion of the level sets and exploit the existence of 
 travelling fronts only at the end.\\

%

We now introduce the object we want to study.
\begin{definition}\label{def:W}
We say that a closed set $\W\subset\R^N$, coinciding with the closure of its
interior, is the {\em asymptotic set of spreading} for a reaction-diffusion 
equation if, for any bounded solution $u$ with a compactly supported
initial datum $0\leq u_0\leq1$ such that $u(t,x)\to1$ as $t\to+\infty$
locally uniformly in $x\in\R^N$, there holds
\Fi{W-}
\forall \text{ compact }K\subset\inter(\W),\quad
\inf_{x\in K}u(t,xt)\to1\quad\text{\;as 
}t\to+\infty,
\Ff
\Fi{W+}
\forall \text{ closed set $C$ such that }C\cap\W=\emptyset,\quad
\sup_{x\in C}u(t,xt)\to0\quad\text{\;as }t\to+\infty.
\Ff
If only \eqref{W-} (resp.~\eqref{W+}) holds we say that $\W$ is an
{\em asymptotic subset} (resp.~{\em superset}) {\em of spreading}.
\end{definition}

The above definition essentially says that the upper level sets of $u$
look approximately like $t\W$ for $t$ large. 
The requirement that $\W$ coincides with the closure of its interior 
automatically implies that the asymptotic set of 
spreading is unique when it exists. The objective of this paper is to derive 
the existence of the asymptotic set of spreading and to express it.

If the asymptotic set of spreading $\W$ is bounded and
star-shaped with respect to the origin - all properties that it is natural to 
expect -  we can write 
$$\W=\{r\xi\,:\,\xi\in S^{N-1},\ \ 0\leq r\leq w(\xi)\},$$ 
with $w$ upper semicontinuous. If $w$ is strictly positive and 
continuous then~$w(\xi)$ is
the \ass\ in the direction $\xi$, in the sense of \eqref{c>w}-\eqref{c<w}. In
addition, those limits 
hold uniformly with respect to $(\xi,c)\in\Sph\times\R_+$
such that $|c-w(\xi)|>\e$, for any $\e>0$.


\begin{remark}\label{rem:invasion}
The requirement in Definition \ref{def:W} that $u\to1$ as $t\to+\infty$ locally 
uniformly in space (or equivalently pointwise, due to parabolic
estimates and strong maximum principle) is automatically fulfilled by any 
$u_0\not\equiv0$ in the periodic case when~$f$ is of KPP type and 
$q$ is divergence-free with average $0$, see \cite{Freidlin}.
A sharp condition for possibly negative $f$ is used in \cite{BHRoques1}, later 
extended to the non-periodic setting in \cite{BHR}.
In the non-KPP cases, it may happen that solutions converge uniformly to 
$0$ (when the ``hair-trigger'' effect 
fails in the monostable case or when the solution is ``quenched'' in the 
combustion or bistable case, see \cite{AW}).
However, some sufficient conditions for the convergence to $1$ can be
readily 
obtained by comparison with solutions of homogeneous equations, to which the
classical results of \cite{AW} apply.

Let us also mention that the requirement that $u_0$ has
compact support (which is only needed for the \super\ property) 
can be relaxed by a suitably fast exponential decay, and that the restriction  
$u_0\leq1$ can be dropped if $f(x,s)<0$ for $s>1$.
\end{remark}

Actually, our approach applies to general space-time dependent equations 
provided that front-like solutions are available, yielding some upper and lower 
bounds on the \ass. Results of this type are derived in the work in progress 
\cite{BNmultiD} in 
the case of Fischer-KPP reaction terms, combining homogenization techniques 
with the tool of the generalized principle eigenvalue. It is not always 
possible to deduce the existence of 
the asymptotic speed of spreading from such bounds, and there are indeed 
cases where the speed of spreading does not exist (see \cite{GGN}). In the 
present paper, beside the periodic framework, we derive the existence of the \ass\ 
for combustion and bistable equations with almost periodic dependence in time,
which was not previously known.
One could wonder if some weaker compactness properties -~such as random stationary ergodicity~- may guarantee the existence
of the speed of spreading, as shown for the Fischer-KPP equation in dimension 
$1$ in \cite{FG,BNuni} and for advection equations in 
\cite{NX09}.
Also, problems set in domains with periodic holes, under Neumann
boundary condition, may also be envisioned. We have chosen not to treat these cases
in the present paper in order to avoid further technicalities.

To sum up, the bridge we build between the propagation of compactly supported data and almost planar solutions turns out to be a powerful
tool, providing:
\begin{itemize}
	\item A new proof of the Freidlin-G\"artner formula.
	\item Extension of the formula to monostable, combustion, bistable reaction terms.
	\item Multi-tiered propagation of disturbance for multistable equations.
	\item Control of the propagation of disturbance in general non-autonomous media 
	in terms of almost planar transition fronts, which yields 
	the existence of the \ass\ for almost periodic, time dependent equations.
	\item For very general autonomous or periodic equations, a strategy
	to reduce compactly supported data to
	{\em front-like}  data,	that is, satisfying
	\Fi{f-l}
	\lim_{x\.e\to-\infty}u_0(x)=1,\qquad
	u_0(x)=0\quad\text{for $x\.e$ large enough},
	\Ff
which allows us to derive the following generalised Freidlin-G\"artner's formula.
\end{itemize}
\begin{meta}
For equations which are periodic in space and time, 
the asymptotic set of spreading exists and is given by
$$\W=\{r\xi\,:\,\xi\in S^{N-1},\ \ 0\leq r\leq w(\xi)\},\qquad\
\text{with}\quad w(\xi)=\inf_{\su{e\in S^{N-1}}{e\.\xi>0}}\frac{c^*(e)}{e\.\xi},$$
where $c^*(e)$ is the speed of spreading for front-like data, i.e., satisfying \eqref{f-l}.
\end{meta}

We give the proof of this metatheorem in Section \ref{sec:meta} below, highlighting 
the hypotheses required on the operator - essentially the validity of the comparison principle and a priori estimates.
If the operator is autonomous and rotationally invariant, then in most cases (such as local parabolic operators)
the problem for a truly {\em planar} datum reduces to an equation in one single space variable. 
It then follows from the above result that $\W$ is a ball with radius independent of $N$,
showing that the propagation of disturbance does not depend on the dimension in which the problem is set, at least at the level of the average speed. We recall that, going beyond the average speed, 
the location of the interface of the disturbance does depend in general on the dimension (by a $\log t$ order in the autonomous case, see \cite{Gartner,Uchi-bi}). 

We have chosen to present the above statement in the vague form of metatheorem,
without specifying the hypotheses on the operator,
in order to give the flavour of the kind of results one can obtain with the method of this paper.
We believe it will be susceptible to application to a wide class of equations.
One application we present here is the extension
to higher dimension and to compactly supported data of the spreading result for multistable autonomous
equations derived in \cite{Terrace,Terracik} using the notion of {\em propagating terrace}. 

%
%

The paper is organized as follows: in Section \ref{intro:periodic} we state
the result about the asymptotic set of spreading in periodic media,
which yields 
Freidlin-G\"artner's formula for general reaction terms. In Section
\ref{intro:general} we present the extension to equations depending on both
space and time, without any periodicity assumption; we also state the new result in the
case of almost periodic temporal dependence.  The remaining sections are
dedicated to the proofs of these results. Namely, the asymptotic subset and
superset of spreading are dealt with in Sections \ref{sec:sub} and
\ref{sec:super} respectively; in both cases, we start with proving the 
most general
statements, from which we deduce the ones in periodic media.
Section \ref{sec:Shen} is dedicated to the derivation of the \ass\ for almost periodic time-dependent equations. 
The Metatheorem and the multi-tiered 
propagation for multistable equations are proved in Sections \ref{sec:meta}
and \ref{sec:terrace} respectively.


\subsection{Periodic case}\label{intro:periodic}

We say that a function defined on $\R^N$ is 
$1$-periodic if it is periodic in each direction of the canonical 
basis, with period $1$, i.e., if it 
is invariant under the translations
$x\mapsto x+z$ for $z\in\Z^N$.
We restrict to functions with period $1$ just 
for the sake of simplicity; what really matters is that all terms in the 
equation have the same period in any given direction of the basis.

Our hypotheses in the periodic case are the ones
 required to apply the results of \cite{pulsating,Xin91,Xin93} 
concerning the existence of \PTF s. The hypotheses intrinsic to our 
method are weaker (cf.~the next subsection).

The matrix field $A$ and the vector field $q$ are smooth
\footnote{More precisely, $A$ is $C^3$ and $q$ 
is $C^{1+\delta}$ in the monostable or combustion cases \cite{pulsating}, and 
$A,q$ are $C^\infty$ in the bistable case \cite{Xin91,Xin93}.}
and satisfy
\Fi{A}
A\text{ is symmetric, uniformly elliptic and $1$-periodic},
\Ff 
\Fi{q}
\dv q=0,\qquad
\int_{[0,1]^N}q=0,\qquad
q\text{ is $1$-periodic}.
\Ff 
The function $f:\R^N\times[0,1]\to\R$ is 
of class $C^{1+\delta}$, for some $\delta\in(0,1)$, and satisfies
\Fi{f}
\begin{cases}
\forall x\in\R^N,\quad f(x,0)=f(x,1)=0,\\
\exists S\in(0,1),\quad\forall x\in\R^N,\quad f(x,\.)\text{ is nonincreasing in
}[S,1],\\
\forall s\in(0,1),\quad f(\.,s)\text{ is $1$-periodic}.  
\end{cases}
\Ff
We further assume that $f$ is in one of the following three classes:
\begin{align}
\text{\em Monostable }\quad & \forall s\in(0,1),\quad 
\min_{x\in\R^N}f(x,s)\geq0,\quad
\max_{x\in\R^N}f(x,s)>0, \label{mono}\\ \notag \\
\text{\em Combustion } \quad &
\begin{cases}
\exists \theta\in(0,1),\quad\forall (x,s)\in\R^N\times[0,\theta],\quad
f(x,s)=0,\\
\di\forall s\in(\theta,1),\quad \min_{x\in\R^N}f(x,s)\geq0,\quad
\max_{x\in\R^N}f(x,s)>0,\\
\end{cases}\label{igni}\\ \notag \\
\text{\em Bistable }\quad &
f(x,s)=s(1-s)(s-\theta),\quad \theta\in(0,1/2).
\label{bi}
\end{align}
In the bistable case, in order to apply the results of Xin \cite{Xin91,Xin93}, 
we need the terms $A$, $q$ to be close to constants, in the 
following sense: 
\Fi{Xin}
\exists h>N+1,\quad \Big\|A-\int_{[0,1]^N}A\Big\|_{C^h([0,1]^N)}<k
,\quad \Big\|q-\int_{[0,1]^N}q\Big\|_{C^h([0,1]^N)}<k,
\Ff
where $k$ is 
a suitable quantity also depending on $h$.

Under the above hypotheses, it follows from \cite{pulsating} in the cases
\eqref{mono} or \eqref{igni}, and from \cite{Xin91,Xin93} in the case 
\eqref{bi}, that \eqref{future} admits {\em \PTF s} in any direction 
$e\in\Sph$. 
These are entire (i.e., for all times) solutions $v$ satisfying
\Fi{ptf}
\begin{cases}
 \forall z\in\Z^N,\ x\in\R^N,\quad v(t+\frac{z\.e}c,x)=v(t,x-z)\\
 v(t,x)\to1\ \text{ as }x\.e\to-\infty,\quad
 v(t,x)\to0\ \text{ as }x\.e\to+\infty,
\end{cases}
\Ff
for some quantity $c$, called {\em speed} of the front.
The above limits are understood locally uniformly in $t\in\R$.
In the monostable case \eqref{mono}, such fronts exist if and only if $c$ is
larger than 
or equal to a critical value, depending on $e$, that we call $c^*(e)$. In 
the other two cases they exist only for a single value of $c$, still denoted by 
$c^*(e)$.
We further know from \cite{pulsating,Xin91,Xin93} that, under the above 
hypotheses, $c^*(e)>0$ for all $e\in\Sph$, and any front $v(t,x)$ is increasing 
in $t$.


Here is the generalization of Freidlin-G\"artner's result.
\begin{theorem}\label{thm:Freidlin}
Under the assumptions \eqref{A}-\eqref{f} and either \eqref{mono} or
\eqref{igni} or \eqref{bi}-\eqref{Xin}, the set
\Fi{W}
\W:=\{r\xi\,:\,\xi\in S^{N-1},\ \ 0\leq r\leq w(\xi)\}, \quad\text{with }\
w(\xi):=\inf_{e\.\xi>0}\frac{c^*(e)}{e\.\xi},
\Ff
is the asymptotic set of spreading for \eqref{future}, in the sense of 
Definition \ref{def:W}. 

Moreover, $w$ is positive and continuous and thus
$w(\xi)$ is the \ass\ in the direction $\xi$.
\end{theorem}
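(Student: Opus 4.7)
The plan is to establish separately the two ingredients of Definition \ref{def:W}: that $\W$ is an \super\ (i.e., \eqref{W+}) and that $\W$ is an \sub\ (i.e., \eqref{W-}). I would prove them in this order, then extract the pointwise limits \eqref{c>w}-\eqref{c<w} at the end.

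For the superset property, the natural tool is the family of \PTF s $v_e$ with critical speed $c^*(e)$, whose existence is granted in every direction $e \in \Sph$ by the periodic hypotheses \eqref{mono}-\eqref{bi}-\eqref{Xin} via \cite{pulsating,Xin91,Xin93}. Fix a closed $C$ disjoint from $\W$; then for every $\xi$ with $r\xi \in C$ we have $r > w(\xi)$, so there exists $e$ with $e \cdot \xi > 0$ and $c^*(e)/(e\cdot \xi) < r$. Translating $v_e$ suitably above a compactly supported $u_0 \leq 1$, and applying the parabolic comparison principle, gives an upper bound on $u(t,xt)$ that tends to $0$ for $x$ in a neighbourhood of the offending direction. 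A compactness argument on $C$ turns this into the required uniform decay. Small care is needed when $u_0$ may touch $1$ (one first reduces to $u_0 < 1$ by a slight time shift and parabolic estimates) and to ensure uniformity in all directions at once, which is precisely the advantage of the Wulff-shape formulation.

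The subset property is the crux and would be proved by contradiction along the lines sketched in the introduction. Assume there is a compact $K \subset \inter(\W)$ and points $x_n \in K$, $t_n \to +\infty$, such that $u(t_n,x_n t_n)$ stays bounded away from $1$. Passing to a subsequence I get a limit $x_n \to x_\infty \in \inter(\W)$, hence $x_\infty = c_\infty \xi_\infty$ with $c_\infty < w(\xi_\infty)$, so there exists a direction $e$ with $e\cdot\xi_\infty>0$ such that $c_\infty < c^*(e)/(e \cdot \xi_\infty)$. I translate the equation by $(t_n, x_n t_n)$ (no rescaling, and using the $\Z^N$-periodicity of $A,q,f$ to shift $x_n t_n$ within a single period after extracting a subsequence) and, thanks to parabolic estimates and uniform ellipticity preserved under translation, pass to a limit $u_\infty$ solving a reaction-diffusion equation of the same form on $\R \times \R^N$, with $u_\infty(0,0) < 1$. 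Because the original $u$ converges locally uniformly to $1$, the level sets $\{u_\infty(0,\cdot) > 1-\eta\}$ contain arbitrarily large portions of the half-space $\{x \cdot \xi_\infty < 0\}$; a pulsating front in direction $e$ with speed $c^*(e)$ can then be slid from far in the direction $-e$ underneath $u_\infty$ and propagated forward by comparison. This forces $u_\infty$ to reach values close to $1$ at the origin at a time much before the translates of $u$ would have, a contradiction with $u(t_n,x_n t_n) \not\to 1$.

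The main obstacle is the subset step, and within it the extraction and exploitation of the limiting solution $u_\infty$. One must simultaneously (i) choose the right window $(t_n,x_n t_n)$ from a slow-spreading sequence, which is cleaner in the Wulff-shape setting since it deals with all directions at once, (ii) verify that a front can actually be placed below $u_\infty(0,\cdot)$, which requires quantifying how large and how "planar" the level sets of $u$ become as $t \to +\infty$, and (iii) carry out the sliding using monotonicity in time of $v_e$ and the comparison principle. Once both inclusions are proved, $\W$ is clearly closed, star-shaped with respect to the origin, and coincides with the closure of its interior; positivity of $w$ follows from the uniform bound $c^*(e) > 0$ on the compact sphere, while continuity is read off from the infimum formula together with continuity of $e \mapsto c^*(e)$, and this upgrades \eqref{W-}-\eqref{W+} to the uniform pointwise statements \eqref{c>w}-\eqref{c<w}.
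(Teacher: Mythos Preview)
Your outline for the superset property is essentially what the paper does, and the overall architecture (superset by comparison with fronts, subset by contradiction via a limit of translations) is correct. However, the subset argument as you have sketched it has a real gap: your choice of the sequence $(t_n,x_n t_n)$ and the ``forward propagation'' step do not produce a contradiction.

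Concretely, you take any sequence with $x_n\in K\subset\inter(\W)$ and $u(t_n,x_nt_n)\not\to1$, pass to a limit $u_\infty$, claim that $\{u_\infty(0,\.)>1-\eta\}$ contains a large portion of $\{x\.\xi_\infty<0\}$, and then slide a front below $u_\infty$ at time $0$ and propagate forward. But forward comparison only controls $u_\infty(T,0)$ for $T>0$, whereas your contradiction concerns $u_\infty(0,0)$; nothing you have assumed about $u(t_n+T,x_nt_n)$ prevents it from being close to $1$. Moreover, there is no reason for $u_\infty(0,\.)$ to be large on a half-space: the mere fact that $u\to1$ locally uniformly gives no information about $u(t_n,\.)$ near $x_nt_n$, which is at distance of order $t_n$ from the origin. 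The paper fixes both issues by a specific choice of the bad sequence: it sets $\mc{R}(t)=\sup\{r:\,u(t,\.)>\eta\text{ on }r\W\}$ and takes $t_n$ to be the \emph{first} time at which $\mc{R}(t)-kt\leq-n$. This yields $\mc{R}(t)>\mc{R}(t_n)-k(t_n-t)$ for all $0\leq t<t_n$, so the limit $u^*$ satisfies $u^*(-T,x)\geq\eta$ for every $T\geq0$ and every $x$ in a half-space moving with speed $k\hat x\.\nu(\hat x)$ in the direction of the outer normal $\nu(\hat x)$ to $\W$ at the touching point. The contradiction then comes from a comparison on $\R_-\times\R^N$ (Lemma~\ref{lem:comparison}), not forward in time. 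Note also that the relevant half-space has normal $\nu(\hat x)$ (the minimizing direction $e$ in the formula for $w$), not the radial direction $\xi_\infty$.

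Two smaller points. First, continuity of $w$ does not require continuity of $c^*$; the paper only proves that $c^*$ is lower semicontinuous (Proposition~\ref{pro:c*LSC}), and continuity of $w$ follows from $\inf c^*>0$ alone (Proposition~\ref{pro:wC}). Second, in the monostable case one cannot directly use the front connecting $0$ to $1$ in the subset step (hypothesis~\eqref{others} fails), so the paper passes through an auxiliary combustion-type front connecting $-\e$ to $1$.
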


The infimum in the definition of $w$ is actually a minimum because the 
function~$c^*$ is lower semicontinuous. To our knowledge, the semicontinuity of 
$c^*$ - Proposition \ref{pro:c*LSC} here - had not been previously derived. 
The weaker property $\inf c^*>0$ ensures in general that a function $w$ as in 
\eqref{W} is continuous, as shown in Proposition~\ref{pro:wC} below. 
The continuity of $w$ is crucial for our
method to work, and we emphasize that it does not require $c^*$ to be
continuous.

\begin{remark}
For $\xi\in\Sph$, let $e_\xi$ be a minimizer of the expression for $w(\xi)$ in 
\eqref{W}. There holds that
$$\forall\xi'\in\Sph,\quad w(\xi')\xi'\.e_\xi\leq c^*(e_\xi)=w(\xi)\xi\.e_\xi,$$
that is, $e_\xi$ is an exterior normal to $\W$ at the point $w(\xi)\xi$.
It then follows that, if $\W$ is smooth, the family
$(t\W)_{t>0}$ expands in the normal direction $\nu$ with speed $c^*(\nu)$, 
exactly as in the homogeneous case. The results of the next section show that, 
in a sense, this property holds true in very general contexts.
\end{remark}


\subsection{Extension to general heterogeneous media}\label{intro:general}

We will derive \thm{Freidlin} as a consequence of two results concerning 
equations with non-periodic space/time dependent coefficients, in the 
general form
\Fi{future-gen}
\partial_t u=\dv(A(t,x)\nabla u)+q(t,x)\.\nabla u+f(t,x,u),\quad t>0,\ x\in\R^N,
\Ff
under milder regularity hypotheses. 
We assume here that there is $\delta>0$ such that
\footnote{\ For us, $g\in C^{k+\delta}$, $k\in\N$, $\delta\in(0,1)$, means that the derivatives of $g$ of order $k$ are {\em uniformly} H\"older-continuous 
with exponent $\delta$; $g=g(t,x)$ 
is in $C^{k+\delta,h+\gamma}$ if $g(\.,x)\in C^{k+\delta}$ and 
$g(t,\.)\in C^{h+\gamma}$.}

\Fi{coeff-gen}
\begin{cases}
A\in C^{\delta,1+\delta}(\R^{N+1})\text{ is symmetric and uniformly 
elliptic},\\
q\in C^\delta(\R^{N+1}),\\
f\in W^{1,\infty}(\R\times\R^N\times[0,1]).
\end{cases}
\Ff 
Notice that the regularity of $A$ allows one to write the equation in 
non-divergence form and to apply Schauder's regularity theory.
Further hypotheses on $f$ are:
\Fi{f-gen}
\begin{cases}
\forall(t,x)\in\R\times\R^N,\quad f(t,x,0)=f(t,x,1)=0,\\
\exists S\in(0,1),\quad\forall (t,x)\in\R\times\R^N,\quad f(t,x,\.)\text{ is
nonincreasing in }[S,1],\\
\forall s\in(S,1),\quad \exists\; 
E \text{ relatively dense in }\R^{N+1},\quad
\inf_{(t,x)\in E}f(t,x,s)>0.   
\end{cases}
\Ff
We recall that a set $E$ is relatively dense in $\R^{N+1}$ if the function 
$\dist(\.,E)$ is bounded on $\R^{N+1}$. 
The above properties are fulfilled by 
all classes of reaction terms considered in the previous section;
the second one is needed for the sliding method to work, the last one prevents 
from having constant solutions between $S$ and $1$.
In the combustion~\eqref{igni} or bistable \eqref{bi} cases, the following 
condition is further satisfied:
\Fi{others}
\exists \theta\in(0,S],\quad \forall (t,x)\in\R\times\R^N,\quad 
f(t,x,\.)\text{ is nonincreasing in }[0,\theta].
\Ff
This is essentially the condition that yields the uniqueness of the speed of 
the fronts (cf.~Lemma \ref{lem:comparison} below). We extend $f(t,x,s)$ to $0$ 
for $s\notin[0,1]$.

Note that in the generality of the above hypotheses, it may happen 
that all solutions emerging from compactly supported initial data 
converge uniformly to $0$, as for instance for the equation 
$u_t-u_{xx}=u(1-u)(u-\theta)$ with $\theta>1/2$. Of course, in such case one 
cannot talk about spreading property, and our definition of asymptotic 
set of spreading is vacous. The analysis of conditions ensuring the 
contrary, i.e., eventual invasion for all or some initial data, is adressed 
in many papers (see the brief discussion in Remark \ref{rem:invasion}) and it is 
out of the scope of the present one.

\begin{definition}\label{def:gtf} An (almost planar) {\em transition front}
in the direction $e\in S^{N-1}$ connecting $S_2$ to $S_1$ is a bounded
solution $v$ for which there exists~$X:\R\to\R$ such~that
\Fi{gtf}
\begin{cases}
 v(t,x+X(t)e)\to S_2 & \text{as }x\.e\to-\infty\\
 v(t,x+X(t)e)\to S_1 & \text{as }x\.e\to+\infty\\
\end{cases}
\qquad\text{uniformly in $t$.}
\Ff
The quantities
$$\liminf_{t\to-\infty}\frac{X(t)}t,\qquad \limsup_{t\to+\infty}\frac{X(t)}t$$
are called respectively the {\em past speed} and the {\em future speed} of the 
transition front.
\end{definition}
Observe that, even if the function $X$ associated with a front 
is not unique, the past and future speeds are. It is readily seen that a 
\PTF\ with speed~$c$, i.e. satisfying \eqref{ptf}, fulfils the Definition 
\ref{def:gtf} of transition front with $X(t)=ct$, and thus has past and 
future 
speeds equal to $c$. The existence of almost planar 
transition fronts in non-periodic media is an open question, which is 
very interesting by itself. Owing to Theorems 
\ref{thm:intermediate} and \ref{thm:super} below, answering to this question 
in some particular cases will imply the spreading results for compactly 
supported initial data.

We will consider the family of {\em limiting equations} associated with 
\eqref{future-gen}:
\Fi{le}
\partial_t u=\dv(A^*(t,x)\nabla u)+q^*(t,x)\.\nabla u+f^*(t,x,u),
\Ff
where $A^*,q^*,f^*$ satisfy, for some sequence 
$(t_n,x_n)_{n\in\N}$ with $t_n\to+\infty$ as $n\to\infty$,
$$A(t+t_n,x+x_n)\!\to\! A^*\!(t,x),\quad q(t+t_n,x+x_n)\!\to\! q^*\!(t,x),\quad
f^*(t+t_n,x+x_n,s)\!\to\! f^*\!(t,x,s)$$
locally uniformly in $(t,x,s)\in\R\times\R^N\times\R$.
Roughly speaking, the family of limiting equations is the $\omega$-limit
set of the original equation \eqref{future-gen}. 

\begin{theorem}\label{thm:intermediate}
Assume that \eqref{coeff-gen}-\eqref{f-gen} hold.
Let $\ul c:S^{N-1}\to\R$ be such that
\Fi{infc>0}
\inf_{\Sph}\ul c>0,
\Ff
and, for all $e\in S^{N-1}$, $c<\ul c(e)$, $\eta<1$ and any limiting equation 
\eqref{le} on $\R_-\times\R^N$, there is a transition front
$v$ in the direction $e$, connecting $0$ and $1$ if $f$ satisfies
\eqref{others}, or connecting some $-\e<0$ and $1$ otherwise,  
which has past speed larger than $c$ and satisfies $v(0,0)>\eta$.
Then, the set $\W$ given by 
\Fi{Wsub_general}
\W:=\{r\xi\,:\,\xi\in S^{N-1},\ \ 0\leq r\leq w(\xi)\}, \quad\text{with }\
w(\xi):=\inf_{e\.\xi>0}\frac{\ul c(e)}{e\.\xi},
\Ff
is an asymptotic subset of spreading for \eqref{future-gen}.
\end{theorem}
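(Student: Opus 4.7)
The approach is to argue by contradiction. Suppose $\W$ is not an asymptotic subset of spreading for some solution $u$ as in Definition \ref{def:W}; then there exist $\eta \in (0,1)$, a compact set $K \subset \inter(\W)$, and sequences $t_n \to +\infty$ and $y_n \in K$ (converging, along a subsequence, to some $y_\infty \in K$) such that $u(t_n, y_n t_n) \le \eta$. From the representation $\W = \bigcap_{e \in \Sph}\{y \in \R^N : y\cdot e \le \ul c(e)\}$, which is immediate from the definition of $w$ in \eqref{Wsub_general}, together with the compactness of $\Sph$ and hypothesis \eqref{infc>0}, one extracts a uniform gap
\[
y_\infty\cdot e \le \ul c(e) - 2\delta \qquad \forall e \in \Sph,
\]
for some $\delta > 0$.

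I would then pass to an entire limit: setting $u_n(t,x) := u(t+t_n, x+y_n t_n)$, parabolic Schauder estimates available under \eqref{coeff-gen} together with a diagonal extraction yield a subsequence converging locally uniformly on $\R \times \R^N$ (along with its coefficients) to a bounded entire solution $u_\infty$ of a limiting equation \eqref{le}, with $u_\infty(0,0) \le \eta$.

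The heart of the proof -- and the main obstacle -- is to contradict this upper bound by placing a transition front beneath $u_\infty$. The decisive observation, emphasized in the introduction, is that while the upper level sets of the original $u$ are merely expanding balls, those of a suitable translation limit do contain \emph{half-space portions}, provided the translation point lies near the \emph{boundary} of a level set rather than deep inside it. I would implement this through an auxiliary diagonal extraction: for each $n$, pick a point $x_n$ on the boundary of the $\eta$-superlevel set of $u(t_n,\cdot)$ located roughly on the segment from $0$ to $y_n t_n$, and take a further limit $\tilde u_\infty$ of $u(t+t_n, x+x_n)$; the divergence of the curvature radius of the $\eta$-boundary forces the past-time superlevel sets of $\tilde u_\infty$ to contain a full half-space $\{x\cdot e \le 0\}$ for a well-chosen direction $e$. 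Fixing then $c$ with $y_\infty\cdot e + \delta < c < \ul c(e) - \delta$, the theorem's hypothesis yields a transition front $v$ in direction $e$ for the limiting equation on $\R_-\times\R^N$, with past speed $> c$ and $v(0,0)$ as close to $1$ as needed. Placing $v$ under $\tilde u_\infty$ at a large negative time (possible by the half-space property) and applying the parabolic comparison principle on $[-T,0]$ propagates the inequality forward to time $0$; transferring the information back to $u$ forces spreading in direction $e$ at speed strictly greater than $y_\infty\cdot e$, contradicting $u(t_n, y_n t_n) \le \eta$.

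Several technical points must be handled: (i) the rigorous construction of $\tilde u_\infty$ with half-space superlevel sets, which requires carefully selecting $x_n$ and controlling the geometry of the level set boundary; (ii) when \eqref{others} fails and the fronts only connect $-\varepsilon$ to $1$ (rather than $0$ to $1$), the comparison at the $-\varepsilon$ state is achieved via sliding arguments together with the relative-density condition in \eqref{f-gen}; (iii) the uniformity of the limit in \eqref{W-} over compact $K \subset \inter(\W)$ follows because the gap $\delta$ above depends only on $K$, allowing the contradiction argument to be run uniformly across $y_\infty \in K$.
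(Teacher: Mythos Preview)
Your overall strategy---pass to an entire limit at a boundary point of a superlevel set, obtain a half-space lower bound for negative times, then compare with a transition front---matches the paper's (via Theorem~\ref{thm:subset}). But the sketch has a genuine gap at the decisive step. Picking $x_n$ on the boundary of $\{u(t_n,\cdot)>\eta\}$ gives information about that level set \emph{only at time $t_n$}; your claim that ``the divergence of the curvature radius forces the past-time superlevel sets of $\tilde u_\infty$ to contain a full half-space'' is unfounded, because curvature at time $t_n$ says nothing about the shape or location of $\{u(t_n-T,\cdot)>\eta\}$ for $T>0$, and comparison with a front of prescribed \emph{past} speed requires exactly that. The paper's crucial device is a different selection of the $t_n$ (not of the $x_n$): one sets $\mc{R}(t):=\sup\{r\ge0:\forall x\in r\W,\ u(t,x)>\eta\}$ and, assuming $\liminf_{t\to\infty}\mc{R}(t)/t<k<1$, takes $t_n$ to be the \emph{first} time at which $\mc{R}(t)-kt\le -n$. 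This minimality forces $\mc{R}(t)>\mc{R}(t_n)-k(t_n-t)$ for all $t\in[0,t_n)$, which is precisely the uniform lower bound on past-time level sets needed so that, after translation by the touching point $x_n\in\partial(\mc{R}(t_n)\W)$ and blow-up of the interior-ball radius, the limit satisfies $u^*(-T,x)\ge\eta$ on a moving half-space for \emph{every} $T\ge0$.

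A second gap concerns the comparison. ``Placing $v$ under $\tilde u_\infty$ at a large negative time and applying comparison on $[-T,0]$'' does not work as stated: both functions tend to $1$ as $x\cdot e\to-\infty$, and on the half-space you only know $\tilde u_\infty\ge\eta$, so there is no time at which $v\le\tilde u_\infty$ pointwise. The paper first uses Lemma~\ref{lem:invasion} (relying on the relative-density hypothesis in~\eqref{f-gen}) to upgrade the half-space bound $\ge\eta$ to a uniform limit $\to1$ as $x\cdot e\to-\infty$, and then applies the sliding-type comparison Lemma~\ref{lem:comparison} on all of $\R_-\times\R^N$, not on a finite slab; this is where the dichotomy~\eqref{v<others}/\eqref{v<mono} and hypothesis~\eqref{others} enter. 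You allude to sliding in your point~(ii), but only for the $-\e$ endpoint, whereas it is needed for the entire comparison.
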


Actually, \thm{intermediate} is in turn a
consequence of another result - \thm{subset} below - which provides a
general criterion for a given set to be an \sub. 
\thm{subset} 
is our most general
statement concerning the asymptotic subset of spreading, and it is the building
block of the whole paper. However, in the applications presented here -
spatial periodic case and temporal almost periodic case (see below) - the 
generality of the hypotheses of \thm{intermediate} is sufficient.

\begin{theorem}\label{thm:super}
Assume that \eqref{coeff-gen}-\eqref{f-gen} hold.
Let $\ol c:S^{N-1}\to\R$ be such that
\Fi{infc>>0}
\inf_{\Sph}\ol c>0,
\Ff
and, for all $e\in S^{N-1}$, $\eta<1$ 
and $R\in\R$, the equation 
\eqref{future-gen}  on $\R_+\times\R^N$ admits a transition front~$v$ in the 
direction 
$e$ connecting $0$ and $1$ with future speed less than or 
equal to $\ol c(e)$ satisfying
\Fi{v>}
\forall t\leq1,\ x\.e\leq R+\ol c(e)t,\qquad
v(t,x)\geq\eta.
\Ff
Then, the set $\W$ given by 
\Fi{W+gen}
\W:=\{r\xi\,:\,\xi\in S^{N-1},\ \ 0\leq r\leq w(\xi)\}, \quad\text{with }\
w(\xi):=\inf_{e\.\xi>0}\frac{\ol c(e)}{e\.\xi},
\Ff
is an asymptotic superset of spreading for \eqref{future-gen}.
\end{theorem}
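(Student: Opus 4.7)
My plan is to recast $\W$ as an intersection of half-spaces, reduce~\eqref{W+} to a uniform directional decay statement on $u$, and obtain that decay by comparing $u$ with a transition front supplied by the hypothesis.

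From the definition of $w$ together with $\inf_{\Sph}\ol c>0$ one checks that $p\in\W$ if and only if $p\cdot e\leq\ol c(e)$ for every $e\in\Sph$, so $\W$ is a bounded closed convex body containing a neighbourhood of~$0$. Let $C\subset\R^N$ be closed with $C\cap\W=\emptyset$. Splitting off the tail $C\cap\{|x|>R_\mathrm{big}\}$, on which $u(t,xt)\to 0$ uniformly in $t\geq1$ thanks to standard Gaussian upper bounds for the fundamental solution of~\eqref{future-gen} combined with the crude linear estimate $|f(t,x,u)|\leq L|u|$, reduces the problem to compact~$C$. By compactness one then finds $\delta>0$ and directions $e_1,\dots,e_k\in\Sph$ with
\[
C\subset\bigcup_{j=1}^k\bigl\{x\in\R^N : x\cdot e_j\geq\ol c(e_j)+\delta\bigr\}.
\]
It thus suffices to prove, for each $e\in\Sph$ and each $\delta>0$,
\[
\sup_{y\cdot e\geq(\ol c(e)+\delta)t}u(t,y)\longrightarrow 0\quad\text{as }t\to+\infty.
\]

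For this directional bound I would apply the hypothesis with the direction $e$, some $\eta\in(S,1)$ close to~$1$, and $R$ so large that $\supp u_0\subset\{x\cdot e\leq R\}$. This yields a transition front $v$ of future speed at most $\ol c(e)$ with $v(0,\cdot)\geq\eta$ on $\{x\cdot e\leq R\}$. Outside $\supp u_0$ one has $u_0=0\leq v(0,\cdot)$ for free; on $\supp u_0$ the defect $u_0-v(0,\cdot)$ is bounded by $1-\eta$. To promote this near-ordering to a pointwise bound $u(t,\cdot)\leq v(t,\cdot)+o(1)$, I would build a supersolution $V$ with $V(0,\cdot)\geq u_0$ by combining an additive exponential correction $(1-\eta)e^{-\alpha t}$, which handles the stable region $\{v\geq S\}$ by virtue of the monotonicity of $f(t,x,\cdot)$ on $[S,1]$ from~\eqref{f-gen}, with an extra compensator acting in the leading-edge region $\{v<S\}$ where $f$ may be increasing in $u$---for instance a slow back-shift of $v$ along~$e$, or a space-time cutoff at an intermediate level. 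The parabolic comparison principle then gives $u\leq V$ on $\R_+\times\R^N$.

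From Definition~\ref{def:gtf} and the future-speed bound there is $X:\R_+\to\R$ with $\limsup_{t\to+\infty}X(t)/t\leq\ol c(e)$ such that $v(t,y)\to 0$ as $y\cdot e-X(t)\to+\infty$ uniformly in~$t$. Hence whenever $y\cdot e\geq(\ol c(e)+\delta)t$ and $t$ is large, $y\cdot e-X(t)\geq(\delta/2)t\to+\infty$, so $v(t,y)\to 0$ uniformly, and therefore $u(t,y)\leq V(t,y)\to 0$ uniformly on that half-space. Substituting $y=xt$ with $x\in C$ and combining the bounds over $j=1,\dots,k$ yields~\eqref{W+}. The step I expect to be the genuine obstacle is the supersolution construction in the previous paragraph: the additive exponential correction alone is not a supersolution in the leading-edge region, so one must tune the extra compensator carefully, in the spirit of the classical Fife--McLeod technique but adapted to the non-autonomous equation~\eqref{future-gen} and to the generalised (almost planar) transition fronts of Definition~\ref{def:gtf}.
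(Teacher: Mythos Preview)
Your reduction---rewriting $\W$ as $\bigcap_{e}\{x\cdot e\leq\ol c(e)\}$, splitting off the far tail of $C$, covering the compact remainder by finitely many shifted half-spaces, and reducing \eqref{W+} to a directional decay statement---is correct and close in spirit to the paper's covering argument (which instead uses the continuity of $w$ from Proposition~\ref{pro:wC}). The gap is exactly where you flag it: the supersolution construction.

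In the generality of \eqref{coeff-gen}--\eqref{f-gen} the Fife--McLeod style correction you sketch is not available. The second condition in \eqref{f-gen} only says $f(t,x,\cdot)$ is \emph{nonincreasing} on $[S,1]$, not strictly decreasing, so the additive term $(1-\eta)e^{-\alpha t}$ need not be absorbed even in the stable region. More seriously, your proposed compensator in the leading-edge region---a ``slow back-shift of $v$ along $e$''---does not make sense here: the coefficients $A,q,f$ depend on $x$, so $v(t,x-\zeta(t)e)$ is not a solution (or sub/supersolution) of \eqref{future-gen}, and the classical Fife--McLeod machinery relies on monotonicity and derivative bounds for a genuine travelling-wave profile that a generic almost-planar transition front in the sense of Definition~\ref{def:gtf} does not possess. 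So the obstacle you identify is real and your proposal does not close it.

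The paper sidesteps the whole issue by moving the comparison time from $t=0$ to $t=1$. Since $u_0\leq1$, $u_0\not\equiv1$, the strong maximum principle gives $\eta:=\max_{x}u(1,x)<1$, and $u(1,\cdot)$ has Gaussian decay because $u_0$ is compactly supported. One then invokes the hypothesis of the theorem with \emph{this specific} $\eta$ and with $R$ to be chosen: condition \eqref{v>} yields $v(1,x)\geq\eta\geq u(1,x)$ for $x\cdot e\leq R+\ol c(e)$, while for $x\cdot e>R+\ol c(e)$ a Harnack-type estimate (the paper cites \cite[Lemma~3.1]{RR}) shows that $v(1,\cdot)$ decays at most exponentially in $x\cdot e$, hence dominates the Gaussian tail of $u(1,\cdot)$ once $R$ is large. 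This gives $v(1,\cdot)\geq u(1,\cdot)$ pointwise with no correction terms at all, and the plain comparison principle yields $u\leq v$ for $t\geq1$. From there your directional decay and covering go through unchanged.
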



If the functions $\ul c$ and $\ol c$ in Theorems \ref{thm:intermediate} and 
\ref{thm:super} coincide then one obtains the existence of the 
asymptotic set of spreading.
A typical application of \thm{intermediate} is with $\ul c(e)$ equal to the 
minimal speed among all transition fronts in the direction $e$ connecting $0$ 
and $1$ for any limiting equation. Instead, $\ol c(e)$ in \thm{super} should be 
the minimal speed only among the fronts for the original equation 
\eqref{future-gen}. In the periodic case considered in 
Section~\ref{intro:periodic}, the two quantities coincide because any limiting 
equation is simply a translation of the original one.
Moreover, in that case, we can restrict to \PTF s. This is how we derive 
\thm{Freidlin}.
However, for equations with arbitrary space-time dependence, it can happen that $\ul c<\ol c$ and that the 
asymptotic set of spreading does not exist, cf.~\cite{GGN}.

Another situation in which the above theorems entail the sharp propagation result
is when the equation does not depend on $x$ 
and therefore it may admit truly planar transition fronts. This is the case of the equation
\Fi{t-dep}
\partial_t u=\Delta u + f(t,u),\quad t>0,\ x\in\R^N.
\Ff
The existence of transition fronts for \eqref{t-dep} is derived by Shen under the assumption that 
$f(t,s)$ is {\em almost periodic} (a.p.~in the sequel) in $t$ uniformly in $s$, that is, $f$ is uniformly continuous and
from any sequence
$(t_n)_{n\in\N}$ in $\R$ one can extract a subsequence
$(t_{n_k})_{k\in\N}$ such that $f(\.+t_{n_k},\.)$ converges uniformly
in $\R^2$. 
The fronts are constructed in dimension $N=1$ and therefore they can be regarded as planar fronts in higher dimension.
Shen considers a reaction term of combustion type in \cite{Shen-comb-ap} and of bistable type in \cite{Shen-bi2} (the precise assumptions are given in Section \ref{sec:Shen} below). She proves that~\eqref{t-dep} admits a transition front, in 
the sense of Definition \ref{def:gtf}, of the form $v(t,x\.e)$, $e\in S^{N-1}$, such that
$X'(t)$ and $v(t,x\.e+X(t))$ are a.p.~in $t$ uniformly in~$x$.
Hence, being a.p., $X'$ satisfies the {\em uniform average} property:
\Fi{avspeed}
c^*:=\lim_{t\to\pm\infty}\frac1t\int_T^{T+t}X'(s)ds\quad
\text{exists uniformly in }T\in\R.
\Ff
In particular, $v$ has past and future speeds equal to $c^*$. 
\begin{corollary}\label{cor:Shen}
Assume that $f(t,s)$ is a.p.~in $t$ uniformly in $s$, uniformly Lipschitz-continuous and 
fulfils either the combustion or bistable condition (see~Section \ref{sec:Shen}).
Then $\W:=\ol B_{c^*}$ is the asymptotic set of spreading for \eqref{t-dep} in any dimension~$N$.
\end{corollary}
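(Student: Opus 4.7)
The plan is to apply Theorems \ref{thm:intermediate} and \ref{thm:super} with $\ul c(e) = \ol c(e) \equiv c^*$, a constant independent of $e$ thanks to the isotropy of \eqref{t-dep}; this will yield that $\ol B_{c^*}$ is simultaneously an asymptotic subset and an asymptotic superset of spreading, hence equals $\W$. To produce transition fronts in every direction $e \in \Sph$, I lift Shen's one-dimensional profile $\varphi$ with position $X$ by setting $v(t,x) := \varphi(t, x\.e)$. Since $\Delta v(t,x) = \partial_{yy}\varphi(t, x\.e)$, the lifted $v$ solves \eqref{t-dep} in $\R^N$ and satisfies Definition \ref{def:gtf} with the same $X$. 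By the uniform average property \eqref{avspeed}, the past and future speeds of $v$ both equal $c^*$.

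To verify the hypotheses of \thm{super}, fix $\eta < 1$, $R \in \R$ and $e \in \Sph$. The uniform almost periodicity of $(t,y)\mapsto \varphi(t, y + X(t))$ yields $M_\eta$ such that $\varphi(t,y) \geq \eta$ whenever $y \leq X(t) - M_\eta$. Because the transition front is required only on $\R_+\times\R^N$ and $X$ is continuous, $X_{\min} := \min_{t\in[0,1]} X(t)$ is finite. Translating spatially by $ze$ produces a transition front with position $X+z$ and unchanged future speed $c^*$; choosing $z$ with $X_{\min} + z \geq R + c^* + M_\eta$, one obtains, for every $t\in[0,1]$ and every $x$ with $x\.e \leq R + c^* t$, the chain $x\.e \leq R + c^* \leq X(t) + z - M_\eta$, which gives \eqref{v>}.

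To verify the hypotheses of \thm{intermediate}, I observe that any limiting equation of \eqref{t-dep} is of the form $\partial_t u = \Delta u + f^*(t,u)$ with $f^*$ in the hull of $f$; since $f$ is a.p., $f^*$ is a.p. and retains the combustion or bistable structure. Shen's construction thus applies to every such $f^*$ and produces a transition front $v^*$ whose associated $(X^*)'$ lies in the hull of $X'$, hence is a.p.\ with the same mean $c^*$; consequently the past speed of $v^*$ is $c^*$, which exceeds any $c < c^*$. The requirement $v^*(0,0) > \eta$ is arranged by spatial translation, exploiting that $\varphi^*(0,y) \to 1$ as $y \to -\infty$. The main technical obstacle is precisely this last step: verifying that Shen's construction transfers uniformly to every $f^*$ in the hull of $f$, yielding transition fronts sharing the common average speed $c^*$ given by \eqref{avspeed}. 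This is consistent with the uniform almost periodicity underlying her arguments in \cite{Shen-comb-ap,Shen-bi2}, but the proof requires careful revisiting of her construction to confirm that the mean of $(X^*)'$ is invariant across the hull.
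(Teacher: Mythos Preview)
Your overall strategy---applying Theorems \ref{thm:intermediate} and \ref{thm:super} with $\ul c=\ol c\equiv c^*$---matches the paper's, and your verification of \eqref{v>} via spatial translation is essentially the paper's argument for the superset part.

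The genuine difference lies in how you handle the hypotheses of \thm{intermediate}. You propose to re-apply Shen's existence result to each limiting nonlinearity $f^*$ in the hull of $f$, and then argue that the resulting $(X^*)'$ lies in the hull of $X'$, hence has mean $c^*$. You correctly flag this as a gap: it is not clear a priori that Shen's construction, applied anew to $f^*$, produces a front whose speed profile is a limit of translates of the original $X'$. That would require either a uniqueness statement for the fronts or a continuity argument for the construction with respect to the hull topology---neither of which you supply.

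The paper bypasses this entirely. Instead of invoking Shen's result again for $f^*$, it takes the \emph{original} front $v$ and passes to the limit along the translations $v(t+t_n, x\.e + X(t_n))$, where $(t_n)$ is the sequence defining the limiting equation. By a priori estimates these converge (up to subsequences) to a solution $v^*$ of the limiting equation. The almost periodicity of $v(t,x\.e+X(t))$ and of $X'$ then gives, directly and uniformly, functions $w$ and $c$ such that $v^*(t,x\.e+Y(t))=w(t,x\.e)$ with $w(t,\pm\infty)$ equal to $1$ and $0$ uniformly in $t$, where $Y(t)=\int_0^t c(s)\,ds$. The uniform average property \eqref{avspeed} then yields $Y(t)/t\to c^*$ with no further work. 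This route is shorter and avoids any dependence on the internals of Shen's construction.

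You also omit the case $c^*\leq 0$ (possible in the bistable setting), which the paper disposes of by observing that then no compactly supported datum can invade, so Definition \ref{def:W} is vacuously satisfied.
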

The question about the spreading speed for \eqref{t-dep} when $f$ is monostable remains open;
we cannot apply Theorems \ref{thm:intermediate}, 
\ref{thm:super} because transition fronts are not
known to exist in such case. This is only known under the stronger
Fisher-KPP condition, see \cite{Shen-mono,NR1}. 
However, the result of Corollary \ref{cor:Shen} has already been obtained in such case
in \cite[Proposition 2.6]{NR1} using a different argument from the one presented here.


\subsection{Multistable equations}\label{intro:multi}

Here we consider the autonomous equation
\Fi{homo}
\partial_t u=\Delta u + f(u),\quad t>0,\ x\in\R^N.
\Ff
We make no assumptions on the number of steady states between $0$ and $1$, nor on their stability.
We just require the following minimal hypotheses:
\Fi{hyp-homo}
	f\in C^1(\R),\qquad f(0)=f(1)=0,
\Ff
\Fi{DGM}
\begin{cases}
\text{There is a solution $\ul u$ of \eqref{homo} in dimension\!\;1 with a compactly supported,}\\
\text{continuous initial datum $0\leq \ul u_0<1$ such that $\ul u(t,x)\to1$ as $t\to+\infty$.}
\end{cases}
\Ff

\begin{theorem}\label{thm:terrace}
Under the assumptions \eqref{hyp-homo}-\eqref{DGM},
there exist some numbers $M\in\N$, $0=\theta_0<\cdots<\theta_M=1$ and $c_1>\cdots> c_M>0$
such that any bounded solution $u$ with a compactly supported
initial datum $0\leq u_0\leq1$ such that $u(t,x)\to1$ as $t\to+\infty$
locally uniformly in $x\in\R^N$ satisfies
\Fi{c>cm}
\forall c>c_m,\quad
\limsup_{t\to+\infty}\bigg(\sup_{|x|\geq  ct}u(t,x)\bigg)\leq \theta_{m-1},
\Ff
\Fi{c<cm}
\forall c<c_m,\quad
\liminf_{t\to+\infty}\bigg(\inf_{|x|\leq  ct}u(t,x)\bigg)\geq \theta_m.
\Ff
\end{theorem}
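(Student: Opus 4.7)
The strategy is to combine the one-dimensional propagating terrace result of \cite{Terrace, Terracik} with the general spreading criteria developed in this paper (Theorems \ref{thm:intermediate} and \ref{thm:super}), applied level by level.

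\textbf{Step 1 (one-dimensional terrace).} Under \eqref{DGM}, the 1D theorems of \cite{Terrace, Terracik} produce the integer $M$, the intermediate steady states $0=\theta_0<\theta_1<\cdots<\theta_M=1$ (necessarily zeros of $f$), the speeds $c_1>\cdots>c_M>0$, and monotone traveling waves $\varphi_m$ of speed $c_m$ on $\R$ satisfying $\varphi_m(-\infty)=\theta_m$ and $\varphi_m(+\infty)=\theta_{m-1}$. Since \eqref{homo} is autonomous and isotropic, $(t,x)\mapsto \varphi_m(x\.e-c_mt)$ is a planar transition front in $\R^N$ in every direction $e\in\Sph$, with past and future speed $c_m$, connecting the constant states $\theta_m$ and $\theta_{m-1}$.

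\textbf{Step 2 (superset inequality \eqref{c>cm}).} I would proceed by finite downward induction on $m$, from $m=M$ to $m=1$, establishing that for every $\e>0$ there exists $K_\e$ with $u(t,x)\leq\theta_{m-1}+\e$ on $\{|x|\geq(c_m+\e)t+K_\e\}$ for all $t$ sufficiently large. The base case $m=M$ is direct: since $u_0\leq 1=\theta_M$ and the support of $u_0$ is compact, the family $\{\varphi_M(x\.e-c_Mt-R)\}_{e\in\Sph}$ dominates $u_0$ for $R$ large (the strong maximum principle applied at a small positive time handles the points where $u_0=1$), and the comparison principle yields the claim. For the inductive step $m+1\to m$, the inductive hypothesis guarantees that at a large time $T$ the solution is bounded by $\theta_m+\e$ on an exterior region. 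I would then pass to the shifted-rescaled equation with nonlinearity $\tilde f_m(w):=(\theta_m-\theta_{m-1})^{-1}f(\theta_{m-1}+(\theta_m-\theta_{m-1})w)$, so that the transformed fronts $\tilde\varphi_m$ connect $0$ and $1$ with speed $c_m$, and apply Theorem \ref{thm:super} (or a local variant on the exterior region) to conclude.

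\textbf{Step 3 (subset inequality \eqref{c<cm}).} An analogous upward induction on $m$ uses Theorem \ref{thm:intermediate}, or rather its building block \thm{subset}, applied at each level to the same shifted-rescaled equation. The hypotheses required at the $m$-th step are: (i) a positive lower bound on the relevant speeds, here $\ul c(e)\equiv c_m$, satisfying \eqref{infc>0} thanks to Step~1 and isotropy; (ii) for every limiting equation---which, by autonomy, is just a spatial translate of \eqref{homo}---the existence of transition fronts in every direction with past speed $c_m$, furnished by $\tilde\varphi_m$; (iii) the local uniform convergence of the shifted solution to $1$ at each level, which follows from the inductive hypothesis together with the 1D terrace convergence theorem of \cite{Terrace, Terracik}.

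\textbf{Main obstacle.} The chief technical point is the shift-and-rescale reduction itself: the transformed nonlinearity $\tilde f_m$ need not literally satisfy the hypotheses \eqref{f-gen} of Theorems \ref{thm:intermediate} and \ref{thm:super}---in particular the relative density condition on $\{f>0\}$ may fail, because $f$ may have intermediate (unstable) zeros in $(\theta_{m-1},\theta_m)$. I would therefore need to isolate, from the proofs of those theorems, exactly which assumptions on $f$ are genuinely used. My expectation is that the crucial ingredients reduce to the comparison and strong maximum principles, combined with the existence of the planar fronts $\tilde\varphi_m$ of speed $c_m$ connecting the two target states, all of which are preserved by the shift-and-rescale transformation. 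A secondary technical issue is the careful bookkeeping of the $\e$-errors along the induction so that they do not accumulate across the finitely many levels.
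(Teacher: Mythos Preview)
Your Step~1 is right, but Steps~2 and~3 both take a route the paper avoids, and Step~3 in particular has a real gap.

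For the upper bound \eqref{c>cm}, the paper neither inducts nor rescales: it compares $u$ directly with the one-dimensional solution $v$ of \eqref{homo} having Heaviside datum $\1_{(-\infty,R]}$, $R$ chosen so that $\supp u_0\subset B_R$. Comparison in every direction gives $u(t,x)\leq v(t,x\.e)$ for all $e$, hence $u(t,x)\leq v(t,|x|)$, and the full terrace structure of $v$ from \cite{Terrace} yields \eqref{c>cm} for every $m$ at once. Your inductive scheme runs into the difficulty that $\varphi_m\leq\theta_m<1$ for $m<M$, so a global comparison $u\leq\varphi_m(\cdot\,-\,c_mt)$ is impossible; the ``local variant on the exterior region'' you invoke would itself be a nontrivial lemma.

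For the lower bound, the shift--rescale is the wrong tool: the solution $u$ does not live in $[\theta_{m-1},\theta_m]$, so $(u-\theta_{m-1})/(\theta_m-\theta_{m-1})$ is neither bounded by $1$ nor a solution of the rescaled equation, and your point~(iii) has no clear meaning. The paper instead runs the geometric contradiction argument of \thm{subset} directly at level $m$: assuming \eqref{c<cm} fails, one dilates balls until touching the level set $\{u=\eta\}$ with $\eta$ slightly below $\theta_m$, and extracts an entire limit $u^*$ which is front-like in some direction with speed $c<c_m$. One then compares $u^*$ from below not with the single wave $\varphi_m$ but with the one-dimensional solution having a front-like datum $v_0$ with $v_0(-\infty)\in(S_m,\theta_m)$, which by \cite{Terracik} converges to the minimal terrace connecting $\theta_m$ to $0$. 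The key missing ingredient is Proposition~\ref{w}: for each $m$ there is a compactly supported one-dimensional datum whose solution invades up to $\theta_m$. This is what makes the terrace theory of \cite{Terrace,Terracik} applicable at the intermediate target $\theta_m$, and its proof requires a separate ODE argument (Lemma~\ref{lem:q}) together with the stability of $\theta_m$ from below, none of which your induction supplies.
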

It follows in particular that, as $t\to+\infty$,
$$
\forall m\in\{1,\dots,M-1\},\ c_{m+1}<c<c'<c_m,\quad
\sup_{ct\leq |x|\leq  c't}|u(t,x)-\theta_m|\to0,
$$
$$
\forall c'<c_M<c_1<c,\quad
\inf_{|x|\leq  c't}u(t,x)\to1,\quad 
\sup_{|x|\geq  ct}u(t,x)\to0.
$$
Namely, $u$ has the following multi-tiered cake shape
far from the regions~$|x|\sim c_m t$ for large $t$:
$$u(t,x)\sim \sum_{m=1}^{M+1}\theta_m\1_{B_{c_mt}\setminus 
B_{c_{m-1}t}}(x)+\theta_M\1_{B_{c_Mt}}(x).$$
The speeds $c_1,\dots,c_M$ are provided by the {\em propagating terraces}
derived in \cite{Terrace} in dimension 1. 
The restriction on the dimension is intrinsic to the method used in \cite{Terrace}, which
relies on the zero-number principle.
We point out that \thm{terrace} is new even in
dimension $1$, because no results have been previously obtained for compactly supported initial data. 
Let us mention that it is also proved in \cite{Terrace,Terracik},
always in dimension 1, that in each region $|x|\sim c_m t$
the solution develops one or more interfaces approaching a planar wave. 
The extension of this precise convergence result to higher dimension is the object 
of the work in progress~\cite{TerraceN}.


\section{Subset of spreading}\label{sec:sub}


\subsection{The general sufficient condition}

In this subsection we derive a sufficient condition for a compact set 
$\W\subset\R^N$, which is
star-shaped with respect to the origin, to be an asymptotic subset of spreading 
for~\eqref{future-gen}. A set of this type can be expressed by
\Fi{star}
\mc{W}=\{r\xi\,:\,\xi\in S^{N-1},\ \ 0\leq r\leq w(\xi)\},\quad
\text{with }\ w\geq0\ \text{ upper semicontinuous}.
\Ff
We will assume that $\W$ fulfils the {\em uniform interior ball 
condition}, that is, that there exists $\rho>0$ such that for all $\hat 
x\in\partial\W$, there is $y\in\W$ satisfying
$$|y-\hat x|=\rho,\qquad \ol B_\rho(y)\subset\W.$$
We say that $\nu(\hat x):=(\hat x-y)/\rho$ is
an exterior unit normal at $\hat x$ (possibly not unique).

We will need two auxiliary results.
\begin{lemma}\label{lem:invasion}
Under the assumptions \eqref{coeff-gen}-\eqref{f-gen}, let $u\in 
C^{1+\delta/2,2+\delta}(\R_-\times\R^N)$ be a supersolution of the equation
\Fi{past-gen}
\partial_t u-\dv(A(t,x)\nabla u)+q(t,x)\.\nabla u=f(t,x,u),\quad t<0,\ x\in\R^N,
\Ff
for which there is $H\subset \R^N$ such that
$$\sup_{x\in H}\,\dist(x,\R^N\setminus H)=+\infty,\qquad \inf_{t<0,\;x\in 
H}u(t,x)>S,$$
where $S$ is from \eqref{f-gen}. Then, 
$$\liminf_{\dist(x,\R^N\setminus H)\to+\infty}\left(\inf_{t<0}u(t,x)\right)
\geq1.$$
\end{lemma}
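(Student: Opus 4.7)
I would argue by contradiction through a two-step compactness/translation procedure, producing a constant limit via the strong minimum principle and then contradicting the ``relatively dense positivity'' in \eqref{f-gen}. Replacing $u$ by $\min(u,1)$ (still a supersolution with the stated extension $f\equiv 0$ outside $[0,1]$, and $1$ being a stationary solution), I may assume $u\le 1$, giving the $L^\infty$-bound needed for Schauder estimates.

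Suppose the conclusion fails: there exist $\alpha>0$ and $(t_n,x_n)$ with $t_n<0$, $\dist(x_n,\R^N\setminus H)\to+\infty$, and $u(t_n,x_n)\le 1-\alpha$. Set $\sigma:=\inf_{t<0,\,x\in H}u>S$. The translates $u_n(t,x):=u(t+t_n,x+x_n)$ are supersolutions of translated equations on domains containing $(-\infty,0]\times\R^N$, satisfy $u_n\le 1$ and $u_n(0,0)\le 1-\alpha$, and for any fixed $(t,x)$ eventually $t+t_n<0$ and $x+x_n\in H$, so $u_n(t,x)\ge\sigma$. By \eqref{coeff-gen} and parabolic Schauder interior estimates, along a subsequence the coefficients converge locally uniformly and $u_n\to u_\infty$ in $C^{1+\delta/2,2+\delta}_{\text{loc}}$ to a supersolution of a limiting equation, with $\sigma\le u_\infty\le 1$ on $(-\infty,0]\times\R^N$ and $u_\infty(0,0)\le 1-\alpha$.

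Let $m:=\inf_{(-\infty,0]\times\R^N}u_\infty\in[\sigma,1-\alpha]\subset(S,1)$ and pick $(s_k,y_k)\in(-\infty,0]\times\R^N$ with $u_\infty(s_k,y_k)\to m$. Iterating the translation/extraction step yields a further limit $v_\infty\ge m$ with $v_\infty(0,0)=m$ on a domain again containing $(-\infty,0]\times\R^N$, and still a supersolution of a (twice) limiting equation. Since $f(t,x,\cdot)$ is nonincreasing on $[S,1]$ with $f(t,x,1)=0$, these properties pass to the limit and give $f^{**}(\cdot,\cdot,m)\ge 0$. Absorbing the Lipschitz nonlinearity into a bounded zeroth-order term $K$, the nonnegative function $w:=v_\infty-m$ satisfies a linear parabolic supersolution inequality $\partial_t w-\mathcal{L}w+Kw\ge 0$; since $w(0,0)=0$, the strong minimum principle forces $w\equiv 0$ on $(-\infty,0]\times\R^N$. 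Plugging $v_\infty\equiv m$ back into the nonlinear supersolution inequality then forces $f^{**}(t,x,m)\le 0$ there.

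The contradiction finally comes from \eqref{f-gen} applied at $s=m\in(S,1)$: there are $\eta>0$ and $D>0$ such that $f(\cdot,\cdot,m)\ge\eta$ on a set whose $D$-neighbourhood covers $\R^{N+1}$. Relative density is preserved under translation and under local uniform passage to the limits, so for every prescribed $(t,x)$ a point $(t_*,x_*)$ within distance $D$ satisfies $f^{**}(t_*,x_*,m)\ge\eta$; taking $(t,x)$ far enough inside $(-\infty,0]\times\R^N$ places $(t_*,x_*)$ there too, contradicting $f^{**}(\cdot,\cdot,m)\le 0$. I expect the main delicate point to be the bookkeeping of the two translations so that both limits live on a domain including $(-\infty,0]\times\R^N$; the one qualitative input one really cannot avoid, needed for the strong minimum principle on $w$, is the sign $f^{**}(\cdot,\cdot,m)\ge 0$ supplied by the monotonicity of $f$ on $[S,1]$ together with $f(\cdot,\cdot,1)=0$.
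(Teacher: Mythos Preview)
Your proof is correct and follows the same line as the paper's: translate, apply the strong maximum principle to force a constant limit, then contradict the relatively dense positivity in \eqref{f-gen}. The paper streamlines your two-step extraction into one by setting $h:=\liminf_{\dist(x,\R^N\setminus H)\to+\infty}\inf_{t<0}u(t,x)$ and choosing $(t_n,x_n)$ with $u(t_n,x_n)\to h$ directly; the very definition of $h$ as a liminf then forces $u_\infty\ge h=u_\infty(0,0)$ on all of $\R_-\times\R^N$, so the minimum is attained after a single limit and there is no need for a second extraction or for tracking a doubly-limiting $f^{**}$.

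One small technical point: the replacement $u\mapsto\min(u,1)$ destroys the $C^{1+\delta/2,2+\delta}$ regularity, and Schauder interior estimates do not apply to mere supersolutions anyway. The compactness you need comes straight from the standing hypothesis $u\in C^{1+\delta/2,2+\delta}(\R_-\times\R^N)$, which already bounds $u$ and its derivatives uniformly, so Arzel\`a--Ascoli gives convergence of the translates in $C^{1,2}_{\mathrm{loc}}$ and the limit is a classical supersolution of the limiting equation.
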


\begin{proof}
Assume by contradiction that
$$h:=\liminf_{\dist(x,\R^N\setminus H)\to+\infty}\left(\inf_{t<0}u(t,x)\right)
\in(S,1).$$
Let $(t_n)_{n\in\N}$ in $\R_-$ and $(x_n)_{n\in\R}$ in $H$ be such that
$$\dist(x_n,\R^N\setminus H)\to+\infty\quad\text{and}\quad u(t_n,x_n)\to 
h\as n\to\infty.$$
The functions $u(\.+t_n,\.+x_n)$ 
converge as $n\to\infty$ (up to subsequences) locally uniformly on 
$\R_-\times\R^N$ to a supersolution $u_\infty$ of a limiting equation 
\eqref{le}. Furthermore, 
$$u_\infty(0,0)=\min_{\R_-\times\R^N} u_\infty=h\in(S,1).$$
Notice that $f(t,x,s)\geq0$ if $s\in[S,1]$ by the first two 
conditions in \eqref{f-gen}, and then the same is 
true for $f^*$. It then follows from the parabolic strong \MP\ that 
$u_\infty=h$ in $\R_-\times\R^N$, whence $f^*(t,x,h)=0$ for 
$t\leq0$, $x\in\R^N$. Let us check that also the last property of~\eqref{f-gen} 
is inherited by $f^*$. Fix $s\in(S,1)$ and let~$E$ be the relatively dense 
set 
in $\R^{N+1}$ 
on which $f(\.,\.,s)$ has positive infimum. The fact that $E$ is relatively 
dense means that there is a compact set 
$K\subset\R^{N+1}$ such that $E\cap(K+\{(\tau,\xi)\})\neq\emptyset$, for any 
$(\tau,\xi)\in\R\times\R^N$.
Hence, for any $(\tau,\xi)\in\R\times\R^N$,
$$\max_{(t,x)\in(K+\{(\tau,\xi)\})}f^*(t,x,s)=\limn
\max_{(t,x)\in(K+\{(\tau+t_n,\xi+x_n)\})}f(t,x,s)\geq\inf_{(t,x)\in 
E}f(t,x,s)>0,$$
that is, $f^*$ fulfils the last condition in~\eqref{f-gen}. This is impossible 
because $f^*(t,x,h)=0$ for $t\leq0$, $x\in\R^N$.
\end{proof}

The second auxiliary lemma is a comparison principle. The proof relies on a 
rather standard application of the sliding method, in 
the spirit of \cite{BN92,pulsating}, and it is presented here in the appendix.
\begin{lemma}\label{lem:comparison}
Assume that \eqref{coeff-gen}-\eqref{f-gen} hold.
Let $\ul u,\ol u\in C^{1+\delta/2,2+\delta}(\R_-\times\R^N)$ be respectively a
sub and a supersolution of \eqref{past-gen} satisfying, for some $e\in S^{N-1}$,
\Fi{olu>}
\ol u>0,\qquad
\liminf_{x\.e\to-\infty}\ol u(t,x)\geq1\quad\text{uniformly in }\;t\leq0,
\Ff
$\ul u\leq1$ and there exists $\gamma>0$ such that either
\Fi{ulu<others}
\forall s>0,\ \exists L\in\R,\quad\ul u(t,x)\leq s\quad\text{for }\ t\leq0,\ 
x\.e\geq \gamma t+L
\Ff
if $f$ satisfies \eqref{others}, or
\Fi{ulu<mono}
\exists L\in\R,\quad\ul u(t,x)\leq0\quad\text{for }\ t\leq0,\ 
x\.e\geq \gamma t+L
\Ff
otherwise. Then, $\ul u(t,x)\leq\ol u(t,x)$ for 
$(t,x)\in\R_-\times\R^N$.
\end{lemma}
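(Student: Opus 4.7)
The natural plan is the \emph{sliding method} in the direction $e$. Consider the family of translates $\ol u_\tau(t,x) := \ol u(t, x-\tau e)$ for $\tau \geq 0$; as $\tau$ grows, $\ol u_\tau$ is pushed in the $+e$ direction, and the uniform limit \eqref{olu>} makes it plausible that $\ol u_\tau \geq \ul u$ for $\tau$ large. Setting
$$\tau^* := \inf\{\tau \geq 0 : \ol u_{\sigma} \geq \ul u \text{ on } \R_-\times\R^N \text{ for every } \sigma\geq\tau\},$$
the plan is to show first that $\tau^* < +\infty$, and then that $\tau^* = 0$, which is exactly the desired comparison.

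For the initialization step, I would split $\R_-\times\R^N$ along the hyperplane $\{x\cdot e = \gamma t + L\}$, where $L$ is the constant furnished by \eqref{ulu<mono}, or by \eqref{ulu<others} applied to a chosen small $s>0$. On the right half-space $\{x\cdot e\geq\gamma t+L\}$, hypothesis \eqref{ulu<mono} directly yields $\ul u\leq 0<\ol u_\tau$, while in the \eqref{others} setting one uses positivity of $\ol u$ together with the fact that $\ol u_\tau$ stays close to $1$ just to the right of the interface to deduce $\ol u_\tau\geq s\geq\ul u$. On the left half-space, $(x-\tau e)\cdot e\leq \gamma t+L-\tau\to-\infty$ uniformly as $\tau\to\infty$, so \eqref{olu>} gives $\ol u_\tau\geq 1-\e$; together with $\ul u\leq 1$ this provides domination up to an $\e$-gap. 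To close that gap, I would exploit the nonincreasing property of $f(t,x,\cdot)$ on $[S,1]$ from \eqref{f-gen}: constants in $[S,1)$ are subsolutions of \eqref{past-gen}, so $\min(\ul u, 1-\e)$ is itself a subsolution, and the sliding is then run with this truncation in place of $\ul u$.

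For the contradiction step, suppose $\tau^*>0$ and choose sequences $\tau_n\uparrow\tau^*$ and $(t_n,x_n)\in\R_-\times\R^N$ with $\ol u_{\tau_n}(t_n,x_n)<\ul u(t_n,x_n)$. After translating by $(t_n,x_n)$ and invoking the Schauder estimates guaranteed by \eqref{coeff-gen}, pass to locally uniform limits $\ol u_\infty$ and $\ul u_\infty$ of the translated sub- and supersolutions; these solve a limiting equation \eqref{le} whose $f^*$ still satisfies \eqref{f-gen}, exactly as in the proof of Lemma \ref{lem:invasion}. The difference $w(t,x):=\ol u_\infty(t, x-\tau^*e)-\ul u_\infty(t,x)$ is then a nonnegative supersolution of a linear parabolic equation with bounded coefficients (obtained from $f^*$ by the mean-value theorem) which vanishes at the origin, and the parabolic strong maximum principle forces $w\equiv 0$. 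This contradicts the asymptotic behaviour inherited by $\ol u_\infty$ and $\ul u_\infty$ from \eqref{olu>} and \eqref{ulu<mono} or \eqref{ulu<others}: on one spatial end $\ol u_\infty(t,x-\tau^*e)\to 1$, whereas on the other $\ul u_\infty$ is nonpositive or arbitrarily small.

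The main obstacle is the initialization: producing an honest pointwise lower bound on $\ol u_\tau$ in the right half-space under \eqref{others}, where $\ul u$ is only bounded above by a small $s>0$ rather than by $0$, demands genuine use of the structural hypotheses on $f$; one must also verify that the almost-touch points $(t_n,x_n)$ in the contradiction step stay in a zone where a genuine contact between $\ol u_{\tau^*}$ and $\ul u$ can occur---the limit cannot lie in the far region $\{x\cdot e\to+\infty\}$ where $\ul u_\infty$ is small, nor in $\{x\cdot e\to-\infty\}$ where $\ol u_\infty\equiv 1$---which is arranged by a careful control of $\dist((t_n,x_n),\{x\cdot e=\gamma t+L\})$.
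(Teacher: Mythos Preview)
Your sliding approach has a fundamental gap: the equation \eqref{past-gen} has $x$-dependent coefficients $A(t,x)$, $q(t,x)$, $f(t,x,\cdot)$, so the translate $\ol u_\tau(t,x)=\ol u(t,x-\tau e)$ is a supersolution of the \emph{translated} equation (coefficients $A(t,x-\tau e)$, etc.), not of \eqref{past-gen} itself. Consequently, in your contradiction step the difference $w(t,x)=\ol u_\infty(t,x-\tau^*e)-\ul u_\infty(t,x)$ is not a supersolution of any linear parabolic equation: the two functions satisfy equations with different coefficients (shifted by $\tau^* e$), and you cannot subtract them to get something to which the strong maximum principle applies. Spatial sliding works when the equation is translation-invariant in the direction $e$, or periodic with period dividing $\tau^*$; neither is assumed here.

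The paper's proof avoids this by sliding additively rather than spatially: it sets $\ol u^\e:=\ol u+\e$ and uses a \emph{time}-based first-touching argument. The point is that the monotonicity of $f(t,x,\cdot)$ on $[S,1]$ (from \eqref{f-gen}) makes $\ol u+\e$ a genuine supersolution of \eqref{past-gen} wherever $\ol u>S$, and the monotonicity on $[0,\theta]$ (from \eqref{others}) makes $\ul u-\e$ a subsolution wherever $\ul u<\theta$. The hypotheses \eqref{olu>}, \eqref{ulu<others}, \eqref{ulu<mono} guarantee that any near-contact point at the first touching time $t_\e$ lies, as $\e\to0$, either in a bounded strip $\{x\cdot e\approx\rho\}$, or in the region $\{x\cdot e\to-\infty\}$ where $\ol u^\e$ is a supersolution, or in $\{x\cdot e\to+\infty\}$ where $\ul u-\e$ is a subsolution; each case yields a contradiction via the strong maximum principle after passing to a limit along translations. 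Your truncation idea $\min(\ul u,1-\e)$ captures part of this spirit, but it does not repair the translation problem.
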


We are now in the position to derive our key result.

\begin{theorem}\label{thm:subset}
Under the assumptions \eqref{coeff-gen}-\eqref{f-gen}, let $\W\subset\R^N$ be a 
compact set, star-shaped with respect to the origin 
and satisfying the uniform interior ball condition. 
Suppose that for all $\eta,k<1$, $\hat x\in\partial\W$ and exterior unit normal 
$\nu(\hat x)$ at $\hat x$, 
every limiting equation \eqref{le}  on $\R_-\times\R^N$ admits a 
subsolution $v\in C^{1+\delta/2,2+\delta}(\R_-\times\R^N)$ satisfying 
$v\leq1$, $v(0,0)>\eta$, and, for some $c>k \hat 
x\.\nu(\hat x)$, either
\Fi{v<others}
\forall s>0,\ \exists L\in\R,\quad v(t,x)\leq s\quad\text{if }\ t\leq0,\ 
x\.\nu(\hat x)\geq ct+L
\Ff
if $f$ satisfies \eqref{others}, or
\Fi{v<mono}
\exists L\in\R,\quad
v(t,x)\leq0\quad\text{if }\ t\leq0,\
x\.\nu(\hat x)\geq ct+L
\Ff
otherwise. Then $\W$ is an asymptotic subset of spreading for~\eqref{future-gen}.
\end{theorem}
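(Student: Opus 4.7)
I argue by contradiction. If $\W$ is not an asymptotic subset of spreading, there exist $\eta^*\in(S,1)$ (one may reduce to this range using $u\to 1$ locally uniformly and continuity), a compact $K\subset\inter(\W)$, and sequences $t_n\to+\infty$, $z_n\in K$ with $u(t_n,t_n z_n)\le\eta^*$. Writing $\W=\{r\xi:\xi\in\Sph,\,0\le r\le w(\xi)\}$ and setting $\rho(t,\xi):=\sup\{r\ge 0:u(t,r\xi)>\eta^*\}$, I introduce
\[
\lambda^*:=\liminf_{t\to+\infty}\inf_{\xi\in\Sph}\frac{\rho(t,\xi)}{t\,w(\xi)}\in[0,1),
\]
and pick $t_n\to+\infty$ together with $\xi_n\to\xi^*$ realising the liminf-inf. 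Then $\hat x:=w(\xi^*)\xi^*\in\partial\W$ plays the role of the critical boundary point, $k:=\lambda^*<1$, and $y_n:=\rho(t_n,\xi_n)\xi_n/t_n\to k\hat x$ satisfies $u(t_n,t_n y_n)=\eta^*$ by continuity. The uniform interior ball condition at $\hat x$ provides $B_\rho(y_0)\subset\W$ and an exterior unit normal $\nu:=(\hat x-y_0)/\rho$.

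Next I translate: define $u_n(t,x):=u(t+t_n,\,x+t_n y_n)$ on $(-t_n,+\infty)\times\R^N$. Uniform parabolic Schauder estimates plus diagonal extraction yield, up to a subsequence, $u_n\to u_\infty$ locally uniformly together with its derivatives, where $u_\infty$ solves a limiting equation \eqref{le}, $u_\infty(0,0)=\eta^*$, and $u_\infty>0$ by the strong maximum principle.

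The crux is to show that $u_\infty(t,x)\to 1$ as $x\cdot\nu\to-\infty$, uniformly in $t\le 0$. For each fixed $\varepsilon>0$ and $T>0$, the definition of $\lambda^*$ gives $u(t_n+t,\cdot)\ge\eta^*$ on $(t_n+t)(\lambda^*-\varepsilon)\W$ for $t\in[-T,0]$ and $n$ large. Scaling the inclusion $B_\rho(y_0)\subset\W$ by $(t_n+t)(\lambda^*-\varepsilon)$ and translating by $-t_n y_n$ places a Euclidean ball of radius $\sim t_n(\lambda^*-\varepsilon)\rho$ - whose centre, using $y_n\to\lambda^*(y_0+\rho\nu)$, lies near $-t_n\lambda^*\rho\nu-t_n\varepsilon y_0$ - inside $\{u_n(t,\cdot)\ge\eta^*\}$. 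A direct geometric computation shows that any fixed $x$ lies in this ball for large $n$ provided $x\cdot\nu\le-C\varepsilon t_n+O(|x|^2/t_n)$ with an explicit constant $C>0$. A careful diagonal extraction, selecting $t_n$ along a subsequence where the $\liminf$ defining $\lambda^*$ is realised fast enough to allow $\varepsilon=\varepsilon_n\downarrow 0$ with $t_n\varepsilon_n\to 0$, then yields $u_\infty(t,x)\ge\eta^*>S$ on $\{x\cdot\nu<0\}\times\R_-$. Lemma~\ref{lem:invasion} applied to the half-space $H=\{x\cdot\nu<0\}$, for which $\sup_{x\in H}\dist(x,\R^N\setminus H)=+\infty$, upgrades this into $u_\infty(t,x)\to 1$ as $x\cdot\nu\to-\infty$ uniformly in $t\le 0$.

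The conclusion follows by comparison. Fix $\eta\in(\eta^*,1)$ and apply the hypothesis at $(\hat x,\nu,k,\eta)$ to the limiting equation solved by $u_\infty$: this yields a subsolution $v\le 1$ with $v(0,0)>\eta$, decaying according to \eqref{v<others} or \eqref{v<mono} at some speed $c>k\hat x\cdot\nu>0$. Lemma~\ref{lem:comparison}, applied with $\ul u=v$, $\ol u=u_\infty$, $e=\nu$ and $\gamma=c$, gives $v\le u_\infty$ on $\R_-\times\R^N$, so $\eta^*=u_\infty(0,0)\ge v(0,0)>\eta>\eta^*$, a contradiction. The main obstacle is the third step: converting the liminf-based definition of $\lambda^*$ into an honest half-space lower bound in the translated frame requires a delicate diagonal argument balancing the a priori unknown convergence rate $\lambda(t)\to\lambda^*$ against the spatial blow-up factor $t_n$, and the flexibility to re-select the bad sequence along a fast-enough subsequence is what makes this possible.
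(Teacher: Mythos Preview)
Your overall architecture matches the paper's, but the step you yourself flag as the crux contains a genuine gap that the diagonal argument cannot close.

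The problem is this. You want $u_\infty(t,x)\ge\eta^*$ on the fixed half-space $\{x\cdot\nu<0\}$ for all $t\le0$, and you try to obtain it from the inclusion $(t_n+t)(\lambda^*-\varepsilon)\W-\{t_n y_n\}\subset\{u_n(t,\cdot)\ge\eta^*\}$. When you compute whether a fixed point $x$ with $x\cdot\nu<0$ lies in the interior ball of this translated set, the leading term in $|x-\text{centre}|^2-(\text{radius})^2$ is of order $t_n^2\varepsilon$ with a \emph{positive} coefficient (essentially $2\lambda^*\rho\,\hat x\cdot\nu>0$), while the favourable term $2t_n\lambda^*\rho\,x\cdot\nu$ is only of order $t_n$. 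So you need $t_n\varepsilon_n\to0$. But you simultaneously need the liminf lower bound $\inf_\xi\rho(s,\xi)/(s\,w(\xi))\ge\lambda^*-\varepsilon_n$ to hold for all $s\in[t_n-T,t_n]$, which forces $t_n\ge T_{\varepsilon_n}$, where $T_\varepsilon$ is the threshold after which the liminf bound with margin $\varepsilon$ is valid. Nothing controls the growth of $T_\varepsilon$ as $\varepsilon\downarrow0$: if $T_\varepsilon$ grows faster than $1/\varepsilon$ one cannot have both $t_n\ge T_{\varepsilon_n}$ and $t_n\varepsilon_n\to0$. Re-selecting the bad sequence $(t_n)$ does not help, because $T_\varepsilon$ is a property of the function $t\mapsto\inf_\xi\rho(t,\xi)/(t\,w(\xi))$ and is fixed once $u$ is.

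The paper avoids this entirely by a different selection of the bad times. Instead of setting $k=\lambda^*$ and taking $t_n$ along a realising sequence, one fixes $k\in(\lambda^*,1)$, works with the single scalar $\mc R(t):=\sup\{r\ge0:u(t,\cdot)>\eta\text{ on }r\W\}$, and defines $t_n:=\min\{t\ge0:\mc R(t)-kt\le-n\}$. The minimality of $t_n$ gives the exact inequality
\[
\mc R(t)>\mc R(t_n)-k(t_n-t)\qquad\text{for every }0\le t<t_n,
\]
with no $\varepsilon$ and valid for \emph{all} past times. This yields $u_n(-T,\cdot)\ge\eta$ on $(\mc R(t_n)-kT)\W-\{x_n\}$, and the interior-ball geometry then shows these sets invade the \emph{moving} half-space $\{x\cdot\nu<-k(\hat x\cdot\nu)T\}$. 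Passing to the moving frame $\ol u(t,x)=u^*(t,x+k(\hat x\cdot\nu)t\,\nu)$ turns this into your fixed half-space, after which Lemma~\ref{lem:invasion} and Lemma~\ref{lem:comparison} apply exactly as you outline. The ``first hitting time'' choice of $t_n$ is precisely the missing idea.
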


\begin{proof}
First, the interior ball condition implies that $\W$ coincides with the closure 
of its interior.
Let $u$ be as in Definition \ref{def:W}. Notice that $u\leq1$ by the
comparison principle. Fix $\eta\in(0,1)$ and $t>0$. We start dilating $\W$ 
unitl it touches the level set $\{u(\.,t)=\eta\}$. Namely, we define
$$\mc{R}^\eta(t):=\sup\{r\geq0\,:\,\forall x\in r\W,\ u(t,x)>\eta\}.$$
On one hand, the above quantity is finite because it is well known that 
$u(t,x)\to0$ as $|x|\to\infty$ (with Gaussian decay, see 
e.g.~\cite{Friedman}),
on the other,
$\mc{R}^\eta(t)\to+\infty$ as $t\to+\infty$ because $u(t,x)\to1$ as 
$t\to+\infty$ locally uniformly in $x\in\R^N$.
In order to prove that $\W$ satisfies the condition \eqref{W-} of the 
asymptotic subsets of spreading it is sufficient to show that
$$
\forall\eta\in(0,1),\quad
\liminf_{t\to+\infty}\frac{\mc{R}^\eta(t)}t\geq1.
$$
Indeed, the above condition implies that, for all $\eta,\e\in(0,1)$, 
$u(t,xt)>\eta$ for $x\in(1-\e)\W$ and $t$ larger than some $t_{\eta,\e}$. 
Then, for any compact $K\subset\inter(\W)$, $\e$ can be chosen in such a way 
that 
$(1-\e)^{-1}K\subset\W$, that is, $K\subset(1-\e)\W$. It follows that, for any 
$\eta<1$, $\inf_{x\in K}u(t,xt)>\eta$ if $t>t_{\eta,\e}$, which is precisely 
condition \eqref{W-}.

Assume by way of contradiction that there exist $\eta,k\in(0,1)$ such that 
\Fi{R<kt}
\liminf_{t\to+\infty}\frac{\mc{R}^\eta(t)}t<k.
\Ff
Clearly, \eqref{R<kt} still holds if one increases $\eta$. Then, 
we can assume without loss of generality that $\eta\in(S,1)$, where $S$ is from 
\eqref{f-gen}. Let us drop for simplicity the $\eta$ in the notation 
$\mc{R}^\eta$. 
We have that $\liminf_{t\to+\infty}(\mc{R}(t)-kt)=-\infty$. We set
$$\forall n\in\N,\qquad t_n:=\inf\{t\geq0\,:\,\mc{R}(t)-kt\leq-n\}.$$
It follows from the continuity of $u$ that the function 
$\mc{R}$ is lower semicontinuous. We then deduce that the above infimum is a 
minimum, i.e.~$\mc{R}(t_n)-kt_n\leq-n<\mc{R}t-kt$ for all $0\leq t<t_n$, and 
that
$t_n\to+\infty$ as $n\to\infty$. To sum up, there holds
\Fi{R<}
\lim_{n\to\infty} t_n=+\infty,\qquad
\forall n\in\N,\ t\in[0,t_n),\quad \mc{R}(t_n)-k(t_n-t)<\mc{R}(t).
\Ff
Take $n\in\N$. By the definition of $\mc{R}$, there exists 
$x_n\in\partial(\mc{R}(t_n)\W)$ such that
$u(t_n,x_n)=\eta$. We know that $|x_n|\to\infty$ as $n\to\infty$.
Define the sequence of functions $(u_n)_{n\in\N}$ by 
$u_n(t,x):=u(t+t_n,x+x_n)$. These functions are equibounded in $C^{1,2}_{loc}$ 
by standard parabolic interior estimates (see, e.g.,~\cite{Friedman}), and 
therefore they converge (up to subsequences) locally uniformly to a solution 
$u^*$ 
of some limiting equation~\eqref{le} which satisfies $u^*(0,0)=\eta$. The 
strong maximum principle then yields~$u^*>0$.

Take $T\in[0,t_n]$ and $x\in(\mc{R}(t_n)-kT)\W$.
It follows from \eqref{R<} that $x\in \mc{R}(t_n-T)\W$, whence
$u(t_n-T,x)\geq\eta$. We then derive
\Fi{un>}
\forall T\in[0,t_n],\ x\in(\mc{R}(t_n)-kT)\W-\{x_n\},\quad
u_n(-T,x)\geq\eta.
\Ff
For $n\in\N$, call 
$$\hat x_n:=\frac{x_n}{\mc{R}(t_n)}\in\partial\W,\qquad y_n:=\hat 
x_n-\rho\nu(\hat x_n),$$ 
where $\rho$ is the radius of the uniform interior ball condition and $\nu(\hat 
x_n)$ is an associated exterior unit normal at $\hat x_n$ (and thus $y_n$ is 
the centre of the ball). The situation is depicted in Figure \ref{fig:W}.
\begin{figure}[H]
\hspace{-30pt}\vspace{-10pt}
\includegraphics[height=8cm]{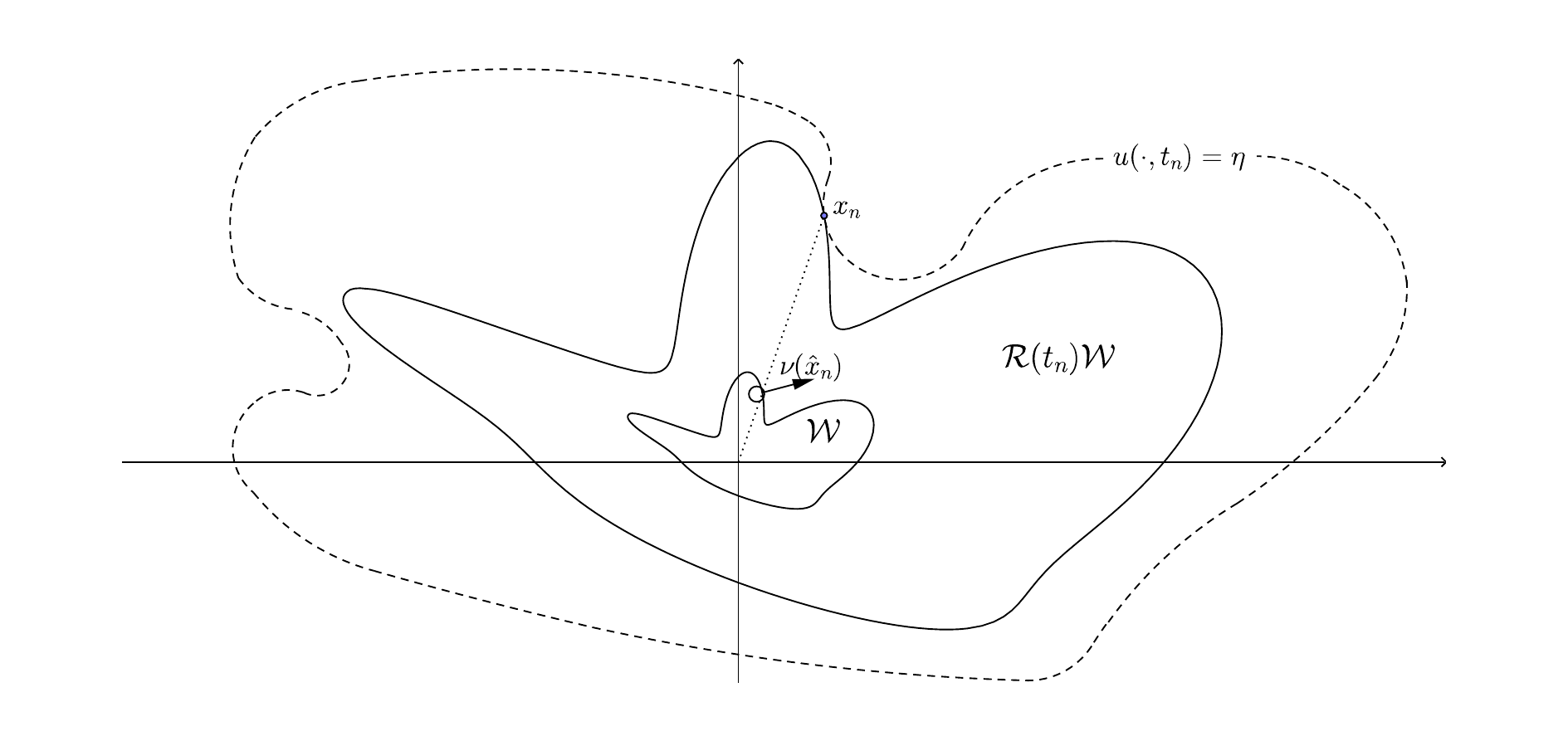}
\caption{Dialation of $\mc{W}$ until touching the level set 
$\{u(\.,t_n)=\eta\}$, at the point $x_n$.}
\label{fig:W}
\end{figure}

Let $\hat x$, 
$\nu(\hat x)$ be the limits of (subsequences of) $(\hat x_n)_{n\in\N}$, 
$(\nu(\hat x_n))_{n\in\N}$. Because $\W$ is closed, $\nu(\hat x)$ is an 
exterior unit normal at $\hat x\in\partial\W$.
We claim that, for any $T\geq0$, as $n\to\infty$, the sets 
$(\mc{R}(t_n)-kT)\W-\{x_n\}$ invade the half-space 
$$\mc{H}_T:=\{x\in\R^N\,:\,x\.\nu(\hat x)<-k(\hat x\.\nu(\hat x))T\},$$
in the sense that
\Fi{half-space}
\mc{H}_T\subset\bigcup_{M\in\N}\,\bigcap_{n\geq M}
\big((\mc{R}(t_n)-kT)\W-\{x_n\}\big)
\Ff
(see Figure \ref{fig:H}).
This property is a consequence of the fact that these sets 
satisfy the interior ball condition with radii 
$(\mc{R}(t_n)-kT)\rho$, which tends to $\infty$ as $n\to\infty$.
\begin{figure}[H]
 \centering
 \subfigure
   {\includegraphics[height=4cm]{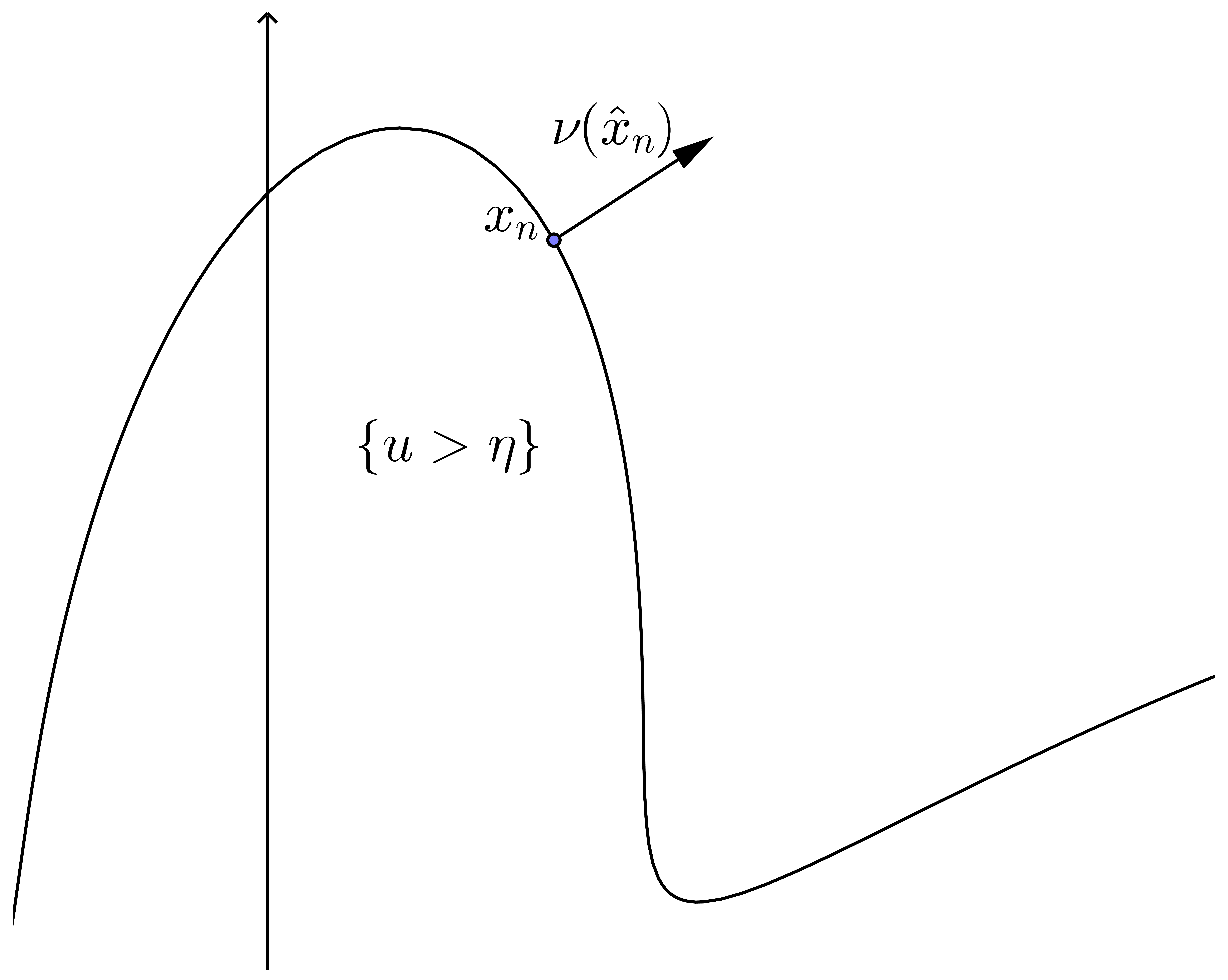}}
 \hspace{10mm}
 \subfigure
   {\includegraphics[height=4cm]{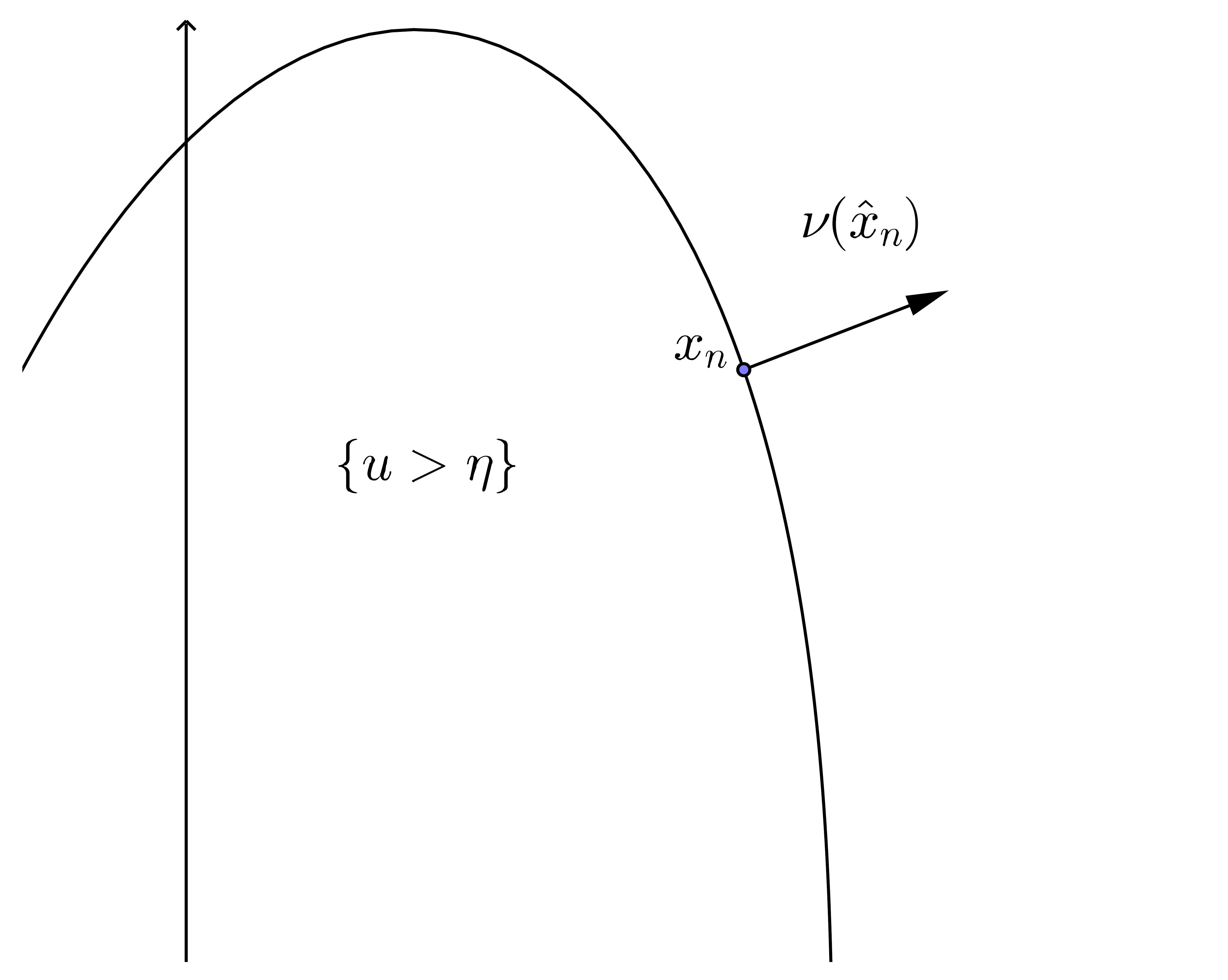}}
  \hspace{-7mm}
 \subfigure
   {\includegraphics[height=4cm]{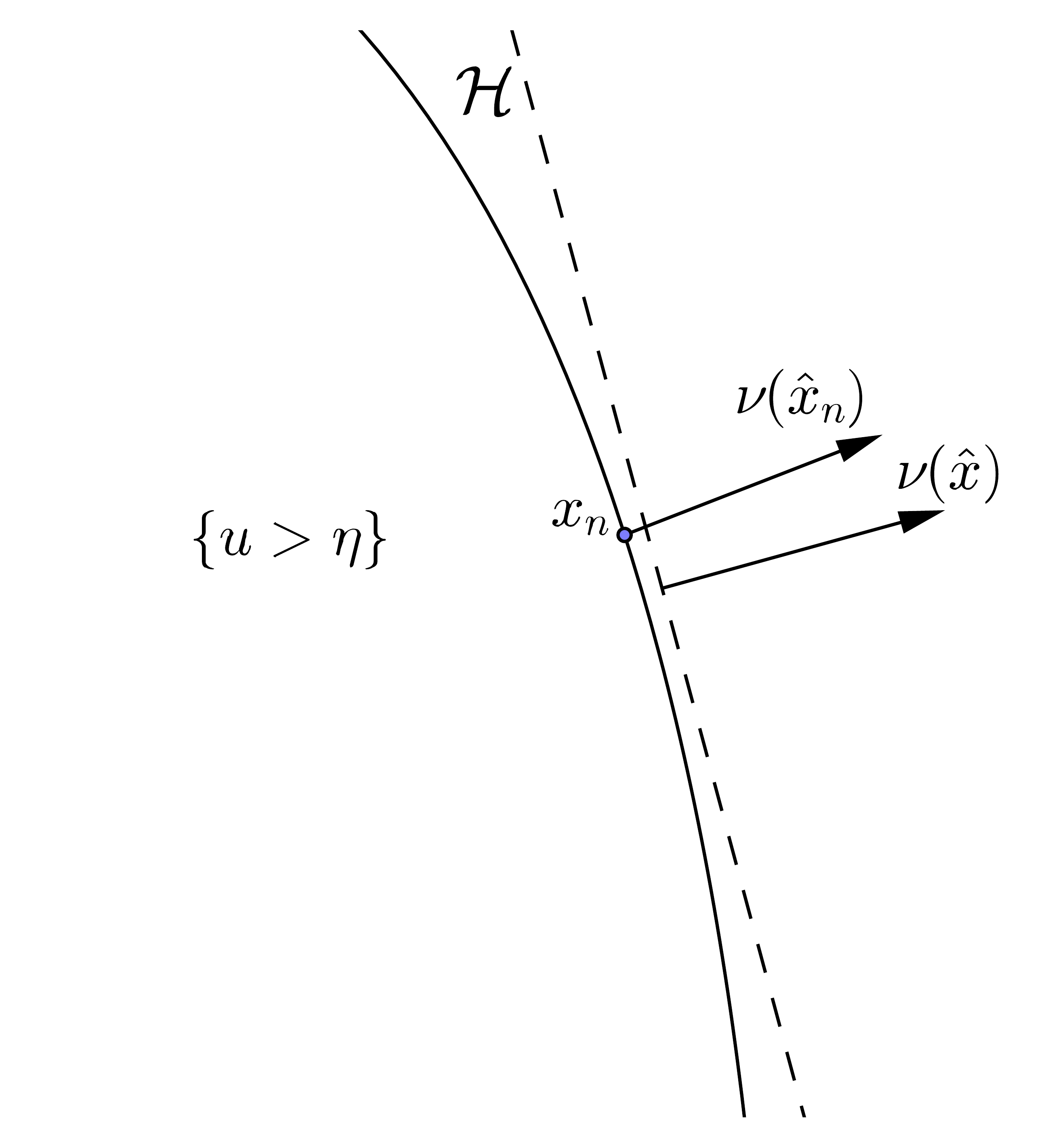}}
 \caption{Invasion of the half-space $\mc{H}$ by $\mc{R}(t_n)\W-\{x_n\}$ as 
$n\to\infty$.}
 \label{fig:H}
\end{figure}

Let us postpone for a moment the proof of \eqref{half-space} and conclude 
the proof of the theorem.
By \eqref{un>} and \eqref{half-space} we get
$$\forall T\geq 0,\ x\in \mc{H}_T,\quad u^*(-T,x)\geq\eta,$$
and, we recall, $u^*(0,0)=\eta$.
Roughly speaking, this means that the set $\{u^*\geq\eta\}$ expands in the direction $\nu(\hat x)$ with 
at most speed $k\,\hat x\.\nu(\hat x)$, which is smaller 
than the speed $c$ of the subsolution $v$ given by the hypothesis of the 
theorem.
In order to get a contradiction from this fact,
consider the function $u^*$ in the frame moving with speed $k(\hat 
x\.\nu(\hat x))$ in the direction $\nu(\hat x)$, i.e.,
$$\ol u(t,x):=u^*(t,x+\zeta t),\qquad\text{with}\quad
\zeta:=k(\hat x\.\nu(\hat x))\nu(\hat x).$$
The function $\ol u$ satisfies $\ol u(t,x)\geq\eta$ if $t\leq0$ and 
$x\.\nu(\hat x)<0$, together 
with $\ol u(0,0)=\eta$, and it is a solution of the equation
\Fi{past-moving}
\partial_t u-\dv(A^*(t,x+\zeta t)\nabla u)+[q^*(t,x+\zeta t)-\zeta]
\.\nabla u =f^*(t,x+\zeta t,\ol u),\quad t<0,\ x\in\R^N,
\Ff
The nonlinear term $f^*(t,x+\zeta t,s)$ clearly fulfils the first two 
conditions in \eqref{f-gen}. Moreover, as we have seen in the last part of the 
proof of Lemma \ref{lem:invasion}, $f^*$ inherits from $f$ the last condition 
in \eqref{f-gen}, and then the same is true for $f^*(t,x+\zeta t,s)$. 
Consequently, since $\ol u(t,x)\geq\eta>S$ 
for $t\leq0$ and $x\in H:=\{x\,:\,x\.\nu(\hat x)<0\}$, we can apply 
Lemma \ref{lem:invasion} and infer that $\ol u(t,x)\to1$ as $x\.\nu(\hat 
x)\to-\infty$ uniformly in $t\leq0$. This means that $\ol u$ satisfies 
\eqref{olu>} with $e=\nu(\hat x)$.
Let $v$ and $c>k \hat x\.\nu(\hat x)$ be as in the statement of the theorem.
The function $\ul u$ defined by 
$$\ul u(t,x):=v(t,x+\zeta t),$$ 
is a subsolution to \eqref{past-moving}. 
We want to apply Lemma \ref{lem:comparison} to $\ul u$, $\ol u$. To do this, we 
need to check that $\ul u$ satisfies \eqref{ulu<others} if the nonlinear term in 
\eqref{past-moving} fulfils \eqref{others}, or the stronger condition 
\eqref{ulu<mono} otherwise. Properties \eqref{ulu<others}, \eqref{ulu<mono} 
hold with  $e=\nu(\hat x)$ and $\gamma=c-k \hat x\.\nu(\hat x)>0$ if $v$ 
satisfies \eqref{v<others}, \eqref{v<mono} respectively. On the one 
hand, \eqref{v<others}, which is weaker than \eqref{v<mono}, always holds by 
hypothesis. On the other hand, if $f^*(t,x+\zeta t,s)$
does not fulfil \eqref{others} then neither does $f$, because \eqref{others} is 
preserved when passing to the limit of translations. Thus, in such case, $v$ 
satisfies \eqref{v<mono} by hypothesis.
We can thereby apply Lemma \ref{lem:comparison} and infer that $\ul 
u(0,0)\leq\ol u(0,0)$. 
This is a contradiction because $\ul u(0,0)=v(0,0)>\eta=\ol u(0,0)$.

To conclude the proof of the theorem, it remains to derive 
\eqref{half-space}. Take $x\in \mc{H}_T$. We compute
\[\begin{split}
\left|\frac{x+x_n}{\mc{R}(t_n)-kT}-y_n\right|
&=\frac{|x+x_n-(\mc{R}(t_n)-kT)
(\hat x_n-\rho\nu(\hat x_n))|}{\mc{R}(t_n)-kT}\\
&=\frac{|x+kT\hat 
x_n+(\mc{R}(t_n)-kT)\rho\nu(\hat x_n)|}{\mc{R}(t_n)-kT}\\
&= \left|\rho\nu(\hat x_n)+\frac{x+kT\hat x_n}
{\mc{R}(t_n)-kT}\right|.
\end{split}
\]
Calling $z_n:=(x+kT\hat x_n)/(\mc{R}(t_n)-kT)$, which tends to 0 
as $n\to\infty$, we rewrite the last term as
$$|\rho\nu(\hat x_n)+z_n|=\sqrt{\rho^2+2\rho\nu(\hat x_n)\.z_n+|z_n|^2}=
\sqrt{\rho^2+|z_n|(2\rho\nu(\hat x_n)\.z_n/|z_n|+|z_n|)}.$$
Since
$$
\limn(2\rho\nu(\hat x_n)\.z_n/|z_n|+|z_n|) = 2\rho\,
\frac{x\.\nu(\hat x)+kT\hat x\.\nu(\hat x)}{|x+kT\hat x|}<0,
$$
because $x\in \mc{H}_T$, we infer that, for sufficiently large $n$, $|\rho\nu(\hat 
x_n)+z_n|<\rho$ and thus 
$$\frac{x+x_n}{\mc{R}(t_n)-kT}\subset B_\rho(y_n)\subset\W.$$
Namely, $x+x_n\in (\mc{R}(t_n)-kT)\W$, and thus \eqref{half-space} is proved. 
\end{proof}

\begin{remark}
It follows from the proof of \thm{subset} that, for given $\hat 
x\in\partial\W$, the only limiting equations for which the 
existence of the subsolution $v$ is needed are the ones obtained by 
translations $(t_n,x_n)_{n\in\N}$ such that
$$\lim_{n\to\infty} 
t_n=+\infty,\qquad\lim_{n\to\infty}\frac{x_n}{|x_n|}=\frac{\hat x}{|\hat x|}.$$
\end{remark}


%


\subsection{Application of the general result}

We now prove \thm{intermediate}. We cannot apply
\thm{subset} directly to the set $\W$ defined by \eqref{Wsub_general}
because it may not fulfil the
uniform interior ball condition. The idea is to consider an interior smooth
approximation $\widetilde{\W}$ of $\W$, but to this end we need at least the 
function $w$ defining $\partial\W$ to be continuous. This is a
general consequence of the definition of $w$. 

\begin{proposition}\label{pro:wC}
 Let $c:\Sph\to\R$ satisfy $\inf c>0$. Then the function $w:\Sph\to\R$ defined
by 
$$w(\xi):=\inf_{e\.\xi>0}\frac{c(e)}{e\.\xi},$$
is positive and continuous.
\end{proposition}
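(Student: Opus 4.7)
My plan is to realise the set $\W := \{r\xi : \xi \in S^{N-1},\ 0 \leq r \leq w(\xi)\}$ as a closed convex body with $0$ in its interior, and then derive continuity of $w$ from the continuity of the Minkowski gauge of $\W$. Positivity is immediate: for unit vectors $e,\xi$ with $e\cdot\xi > 0$, Cauchy--Schwarz yields $e\cdot\xi \leq 1$, so $c(e)/(e\cdot\xi) \geq c(e) \geq \inf c$, and taking the infimum over $e$ gives $w(\xi) \geq \inf c > 0$. The choice $e = \xi$ also shows $w(\xi) \leq c(\xi) < \infty$, so $w$ is finite.

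The key step would be the identity
$$\W = \bigcap_{e \in S^{N-1}} \{x \in \R^N\,:\, e\cdot x \leq c(e)\}.$$
Writing $x = r\xi$ with $r = |x|$ and $\xi = x/|x|$, the inequality $r \leq w(\xi)$ amounts to $r(e\cdot\xi) \leq c(e)$ for every $e$ with $e\cdot\xi > 0$, i.e.\ $e\cdot x \leq c(e)$ for such $e$; for the remaining $e$'s the bound is automatic, since $e\cdot x \leq 0 < c(e)$ thanks to $\inf c > 0$. The right-hand side is an intersection of closed half-spaces, hence $\W$ is closed and convex; moreover $\W \supset \overline{B_{\inf c}(0)}$, so $0 \in \mathrm{int}(\W)$.

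To conclude, I would introduce the Minkowski gauge $\mu_\W(x) := \inf\{t > 0 : x \in t\W\}$. Since $\W$ is convex with $0$ in its interior, $\mu_\W$ is a finite positively homogeneous sublinear function on $\R^N$, and is therefore Lipschitz continuous. Star-shapedness of $\W$ with respect to $0$ gives the relation $\mu_\W(\xi) = 1/w(\xi)$ for $\xi \in S^{N-1}$, and $w(\xi) < \infty$ ensures $\mu_\W(\xi) > 0$; hence $w = 1/\mu_\W$ is continuous on $S^{N-1}$. The only subtle point is the half-space representation of $\W$, which relies crucially on $\inf c > 0$ in order to discard the constraint $e\cdot\xi > 0$; beyond that no obstacle is expected.
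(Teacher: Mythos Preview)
Your proof is correct and takes a genuinely different route from the paper's. The paper argues by a direct $\varepsilon$--$\delta$ estimate: it first bounds $w$ from above by $\bar c\sqrt N$ using only the values of $c$ on $\{\pm e_1,\dots,\pm e_N\}$, then for each $\xi$ picks a near-minimiser $e_\varepsilon$ of the quotient, shows that $e_\varepsilon\cdot\xi$ stays bounded below by a uniform constant $h>0$ (this is where $\inf c>0$ enters), and finally controls $w(\xi')-w(\xi)$ by plugging $e_\varepsilon$ into the infimum defining $w(\xi')$. This yields uniform continuity with an explicit modulus.

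Your approach is more structural: the half-space representation $\W=\bigcap_{e}\{x:e\cdot x\le c(e)\}$ immediately exhibits $\W$ as a closed convex set containing a ball around the origin, and continuity of $w$ then drops out of the Lipschitz continuity of the Minkowski gauge. The hypothesis $\inf c>0$ enters at the same conceptual point as in the paper (making the constraints with $e\cdot\xi\le0$ redundant), and the pointwise finiteness $w(\xi)\le c(\xi)$ is exactly what guarantees $\mu_\W>0$ on $S^{N-1}$. Your argument is shorter and has the bonus of revealing the convexity of $\W$, a fact the paper only alludes to through the ``Wulff shape'' terminology; the paper's argument, on the other hand, is entirely self-contained and avoids any appeal to convex-analysis machinery.
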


\begin{proof}
There holds the lower bound $w\geq\inf c>0$. Let us show that
$w$ is bounded from above. Consider the family
$\mc{B}:=\{\pm e_1,\pp\pm e_N\}$, where $\{e_1\pp e_N\}$ is the canonical 
basis of $\R^N$. Then, calling $\ol c:=\max_{\mc{B}}c$, we find
$$\forall\xi\in S^{N-1},\quad
w(\xi)\leq\min_{\su{e\in\mc{B}}{e\.\xi>0}}\frac{c(e)}{e\.\xi}
\leq\ol c\left(\max_{\su{e\in\mc{B}}{e\.\xi>0}}\,e\.\xi\right)^{-1}\leq \ol 
c\sqrt{N}.$$
Now, fix $\xi\in S^{N-1}$. For $\e\in(0,1)$, let $e_\e\in S^{N-1}$ be such that 
$$e_\e\.\xi>0,\qquad w(\xi)>\frac{c(e_\e)}{e_\e\.\xi}-\e.$$
Hence, $c(e_\e)/e_\e\.\xi<\ol c\sqrt{N}+1$, from which we deduce
$$c(e_\e)<\ol c\sqrt{N}+1,\qquad e_\e\.\xi>h:=
\frac{\inf c}{\ol c\sqrt{N}+1}.$$
For $\xi'\in S^{N-1}$ such that $|\xi'-\xi|<h/2$, it holds that 
$e_\e\.\xi'>h/2$, whence
$$w(\xi')-w(\xi)\leq \frac{c(e_\e)}{e_\e\.\xi'}-w(\xi)<
\frac{c(e_\e)}{e_\e\.\xi'}-\frac{c(e_\e)}{e_\e\.\xi}+\e
\leq2\,\frac{\ol c\sqrt{N}+1}{h^2}|\xi-\xi'|+\e.$$
The latter term is smaller than $2\e$ for $|\xi'-\xi|$ small enough, 
independently of $\xi,\xi'$. This shows that $w$ is (uniformly) continuous.
\end{proof}

\begin{proof}[Proof of \thm{intermediate}]
Let $w$ and $\W$ be as in \eqref{Wsub_general}.
Owing to \eqref{infc>0}, we can apply Proposition \ref{pro:wC} and 
deduce that $w$ is positive and continuous. It follows in particular that 
$\min w>0$ and that the set $\W$ coincides with the closure of its interior.
Moreover, for any $h\in(0,\min w)$, we can 
consider a smooth approximation $\t w$ of the function $w-h/2$ satisfying 
$w-h<\t w<w$.
If we show that, for any $h\in(0,\min w)$, the set 
$$
\widetilde{\mc{W}}:=\{r\xi\,:\,\xi\in S^{N-1},\ \ 0\leq r\leq \t w(\xi)\},
$$
is an \sub, the same is true for $\W$, because
if $K\Subset\inter(\W)$ then $K\Subset\inter(\widetilde{\mc{W}})$ for 
$h$ small enough.
This is achieved by showing that $\widetilde{\W}$ satisfies the hypotheses of 
\thm{subset}.


Consider $\eta,k<1$, $\hat x\in\partial\widetilde{\W}$, the (unique) exterior 
unit normal $\nu(\hat x)$ to $\widetilde{\W}$ and a limiting
equation \eqref{le}. We know that $\hat x\neq0$ and thus we can write $\hat
x=\t w(\xi)\xi$, with $\xi:=\hat x/|\hat x|\in S^{N-1}$. 
%
By hypothesis, there is a transition front $v$ in the direction $\nu(\hat x)$
for \eqref{le} on $\R_-\times\R^N$, which connects $0$ and $1$ if $f$ satisfies 
\eqref{others}, or some $-\e<0$ and~$1$ otherwise, has speed larger 
than $c:=k\ul c(\nu(\hat x))$ and satisfies $v(0,0)>\eta$.
Let $X$ be the function for which $v$ satisfies the limits in \eqref{gtf} with 
$S_1=-\e$ or $0$ and $S_2=1$. It follows from the uniformity of these limits 
and the strong maximum principle that $v<1$.
Moreover, since $v$ has speed larger than $c$, there holds that $X(t)<ct$ for 
$t$ less than some $T<0$. On the other hand, we know from \cite{HR1} that $X$ 
is locally bounded and thus there exists $K>0$ such that $X(t)<ct+K$ for all 
$t\leq0$. As a consequence, by \eqref{gtf}, $v$ satisfies \eqref{v<others} if 
$f$ fulfils \eqref{others}, or \eqref{v<mono} otherwise.
%
%
%
Finally, we deduce from the smoothness of $\t w$ that $\hat 
x\.\nu(\hat 
x)>0$, 
i.e.~$\xi\.\nu(\hat x)>0$. We can then compute
$$c=k\ul c(\nu(\hat x))\geq k\nu(\hat x)\.\xi\,\inf_{\su{e\in 
S^{N-1}}{e\.\xi>0}}\frac{\ul c(e)}{e\.\xi}=
k\nu(\hat x)\.\xi\, w(\xi)>k\t w(\xi)\,\xi\.\nu(\hat x)=k\hat 
x\.\nu(\hat x).$$
We have shown that $v$ satisfies all the requirements in \thm{subset}, whence 
$\widetilde{\W}$ is an \sub.
%
%
\end{proof}


\subsection{The periodic case}\label{sec:periodic}

In this subsection, we prove that the set $\W$ defined by 
\eqref{W} is an \sub\ for \eqref{future}. This is achieved by showing that the 
(minimal) speed $c^*$ for 
pulsating travelling fronts satisfies the hypotheses for $\ul c$ in 
\thm{intermediate}.

We recall the known results about \PTF s: they are increasing in time and their 
critical speed $c^*(e)$ is positive. It is also readily seen 
that~\eqref{ptf} yields the transition front condition \eqref{gtf} with 
$S_1\!=0$, $S_2\!=1$ and $X(t)=cte$.

The first hypothesis to check in \thm{intermediate} is $\inf c^*>0$. We derive 
it from the following result, which is of independent interest. 

\begin{proposition}\label{pro:c*LSC}
Under the assumptions of \thm{Freidlin}, the function $c^*:S^{N-1}\to\R$ is 
lower semicontinuous.
\end{proposition}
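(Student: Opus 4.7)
The plan is to prove lower semicontinuity by a compactness argument on pulsating travelling fronts. Given $e_n \to e$ in $\Sph$ along which $c^*(e_n) \to c \in [0, +\infty]$, I want to show $c \geq c^*(e)$. The case $c = +\infty$ is trivial, so assume $c < +\infty$. Let $v_n$ be a \PTF\ in direction $e_n$ with speed $c_n := c^*(e_n)$; recall that $v_n$ is strictly increasing in $t$, takes values in $(0, 1)$, and satisfies the pulsating relation \eqref{ptf} with the usual limits $1$ at $-\infty$ and $0$ at $+\infty$ in $x \cdot e_n$. Iterating \eqref{ptf} along any $z \in \Z^N$ with $z \cdot e_n > 0$ shows that $v_n(t, 0) \to 0$ as $t \to -\infty$ and $v_n(t, 0) \to 1$ as $t \to +\infty$, so I may time-shift each $v_n$ (preserving the \PTF\ property, the equation being autonomous in $t$) to arrange $v_n(0, 0) = \alpha$ for a fixed $\alpha \in (S, 1)$, where $S$ is the threshold from \eqref{f}. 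Interior parabolic estimates then extract a subsequence converging in $C^{1, 2}_{loc}(\R \times \R^N)$ to a bounded solution $v$ of \eqref{future} with $0 \leq v \leq 1$, $\partial_t v \geq 0$, and $v(0, 0) = \alpha$.

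I first show $c > 0$. If $c_n \to 0$, then for any $z \in \Z^N$ with $z \cdot e > 0$ we have $\tau_n := z \cdot e_n / c_n \to +\infty$ for $n$ large, and for every $k \in \N$ the identity $v_n(k \tau_n, x) = v_n(0, x - k z)$ combined with the inequality $v_n(0, \cdot - z) \geq v_n(0, \cdot)$ (obtained from \eqref{ptf} at $t = 0$ together with the $t$-monotonicity) contradicts the upper bound $v_n \leq 1$ after iterating in $k$ and taking $n \to \infty$. Once $c > 0$, the relation $v_n(t + z \cdot e_n / c_n, x) = v_n(t, x - z)$ passes to the limit to yield $v(t + z \cdot e / c, x) = v(t, x - z)$ for every $z \in \Z^N$. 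Consequently the limits $v^{\pm \infty}(x) := \lim_{t \to \pm \infty} v(t, x)$ exist and are $1$-periodic steady states of \eqref{future}, sandwiching $v(0, \cdot)$ from below and above.

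The crux is to identify $v^{-\infty} \equiv 0$ and $v^{+\infty} \equiv 1$; once this is done, the pulsating relation transfers these into the spatial limits $v(t, x) \to 1$ as $x \cdot e \to -\infty$ and $v(t, x) \to 0$ as $x \cdot e \to +\infty$, making $v$ a \PTF\ in direction $e$ with speed $c$. This then forces $c \geq c^*(e)$ in the monostable case (where $c^*(e)$ is the minimal admissible speed) and $c = c^*(e)$ in the combustion and bistable cases (where it is the unique admissible speed). Since $v^{+\infty}(0) \geq \alpha > S$ and $f(x, \cdot)$ is nonincreasing on $[S, 1]$ by \eqref{f}, the strong maximum principle applied to $1 - v^{+\infty}$ excludes any periodic steady state in $[S, 1]$ other than $1$, giving $v^{+\infty} \equiv 1$. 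The main obstacle is the corresponding statement for $v^{-\infty}$: in the combustion and bistable cases the intermediate constant $\theta$ is itself a periodic steady state, and ruling it out requires transferring to the limit the uniform tail decay of $v_n$ on the $0$-side (exponential decay in the bistable case, where the linearization at $0$ has principal eigenvalue $-\theta < 0$, and the analogous quantitative decay for combustion PTFs coming from the construction in \cite{pulsating}); in the monostable case, integrating the steady-state equation over $[0, 1]^N$ and using $\dv q = 0$ together with $\int q = 0$ reduces the problem to $\int_{[0,1]^N} f(x, v^{-\infty}(x))\, dx = 0$, which together with $f \geq 0$ and the uniform spatial decay of the $v_n$ forces $v^{-\infty} \equiv 0$.
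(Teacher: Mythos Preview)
Your overall compactness strategy matches the paper's, but there are two genuine gaps.

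\textbf{The argument ruling out $c=0$ does not work.} From the pulsating relation you correctly get $v_n(k\tau_n,x)=v_n(0,x-kz)$ and, by $t$-monotonicity, $v_n(0,\.-z)\geq v_n(0,\.)$. Iterating the latter only yields $v_n(0,x-kz)\geq v_n(0,x)$, i.e.\ the obvious monotonicity of the profile in the direction $-z$; there is no additive gain at each step, so nothing forces $v_n$ above~$1$. In fact $c=0$ cannot be excluded a priori (positivity of $\min c^*$ is a \emph{consequence} of the lower semicontinuity you are proving, not an input). The paper handles $c=0$ as a separate case: the normalisation together with $t$-monotonicity gives $v(t,x)\geq M$ on a half-space $\{x\.e<-\sqrt N\}$ uniformly in $t\leq0$, Lemma~\ref{lem:invasion} upgrades this to $v(t,x)\to1$ as $x\.e\to-\infty$, and then the sliding comparison Lemma~\ref{lem:comparison} against the actual \PTF\ in direction $e$ (or, in the monostable case, against an auxiliary combustion front connecting $-1$ to $1$) yields the contradiction.

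\textbf{The identification $v^{-\infty}\equiv0$ in the combustion and bistable cases is not justified.} You invoke a ``uniform tail decay of $v_n$ on the $0$-side'' inherited from the constructions in \cite{pulsating,Xin91,Xin93}, but no such decay uniform in the direction $e_n$ is established there, and the intermediate constant $\theta$ is a genuine periodic steady state that blocks a pure strong-maximum-principle argument. The paper sidesteps this entirely: once $c>0$ and $v^{+\infty}\equiv1$, the pulsating relation gives $\ol u(t,x):=v(t,x+cte)$ the property \eqref{olu>}, and Lemma~\ref{lem:comparison} applied with $\ul u$ a time-translate of the \PTF\ $\t v$ in direction $e$ shows directly that $c<c^*(e)$ is impossible. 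In other words, in the combustion/bistable cases the paper never proves that the limit $v$ is itself a \PTF; it only needs the one-sided convergence to~$1$ and the comparison lemma. Your monostable argument via integration is closer to the paper's (which uses the strong maximum principle to first show $v^-$ is constant, then $\max_x f(x,m^-)>0$ to force $m^-=0$), but the final clause ``uniform spatial decay of the $v_n$ forces $v^{-\infty}\equiv0$'' is again an unproven claim.
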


\begin{proof}
We need to show that, 
given a sequence $\seq{e}$ in $\Sph$ such that 
$$e_n\to e\in\Sph\quad\text{and}\quad c^*(e_n)\to c\in[0,+\infty)\quad\text{as 
}n\to\infty,$$ 
there holds that $c^*(e)\leq c$. Let $v_n$ be the
pulsating travelling front in the direction~$e_n$ connecting $0$ and $1$ with 
speed $c^*(e_n)$. Take $M\in(\theta,1)$ if $f$ satisfies either \eqref{igni} or 
\eqref{bi}, or set $M:=1/2$ in the case \eqref{mono}. Since $v_n(t,x)\to0$ or 
$1$ as $t\to-\infty$ or $+\infty$ locally uniformly in $x\in\R^N$, by a 
temporal translation we reduce to the case where
\Fi{vn>M}
\min_{x\in[0,1]^N}v_n(0,x)=M.
\Ff
The $v_n$ converge (up to subsequences) locally uniformly to a solution 
$0\leq v\leq 1$
which is nondecreasing in $t$ and satisfies the normalization condition 
\eqref{vn>M}.
Actually, $0<v<1$ by the parabolic strong \MP.

{\em Case $c>0$.}\\
Because the $v_n$ satisfy the first condition in \eqref{ptf} with $e=e_n$ and 
$c=c_n$, passing to the limit as $n\to\infty$ we deduce that $v$ satisfies the 
first condition in \eqref{ptf}. Then, letting $t\to\pm\infty$ in such condition 
we infer that the functions $v^\pm$ defined by $v^\pm(x):=v(\pm\infty,x)$ are 
$1$-periodic. It 
follows in particular that 
$$\exists x^\pm\in\R^N,\quad v^\pm(x^\pm)=\min_{\R^N}v^\pm=:m^\pm,\qquad
0\leq m^-\leq M\leq m^+\leq1.$$ 
We further know from parabolic estimates that the 
convergences of $v$ to $v^\pm$ as $t\to\pm\infty$ hold locally uniformly 
in $\R^N$, and that the $v^\pm$ 
are stationary solutions of~\eqref{future}. 
Since $f\geq0$ on $\R^N\times[M,1]$, we have that $f(x,v^+)\geq0$. The \SMP\ 
then yields $v^+\equiv m^+$ and thus $f(x,m^+)=0$ for all $x\in\R^N$.
We then deduce from the choice of $M$ that $m^+=1$, that is, $v^+\equiv1$. 
For $x\in\R^N$, let $z(x)\in\Z^N$ be such that $x-z(x)\in[0,1)^N$. 
By the first property in \eqref{ptf}, we can write
\Fi{vmoving}
\forall (t,x)\in\R\times\R^N,\quad
v(t,x)=v\Big(t-\frac{z(x)\.e}c,x-z(x)\Big), \quad\text{with }\ x-z(x)\in[0,1)^N.
\Ff
Whence, since $z(x)\.e\to-\infty$ as $x\.e\to-\infty$ and 
$v(+\infty,x)=1$ locally uniformly in $x$, we find that
\Fi{vto1}v(t,x)\to1\quad\text{as 
}x\.e\to-\infty\quad\text{locally uniformly in }
t\in\R.\Ff
Similarly, if $f$ satisfies the monostability hypothesis \eqref{mono}, 
we derive $m^-=0$, whence $v^-\equiv0$ by the 
strong maximum principle, and \eqref{vmoving} eventually yields 
that both limits in the last condition in \eqref{ptf} hold in such case. That 
is, $v$ is a pulsating travelling front in the direction $e$ connecting $0$ and 
$1$ 
with speed $c$, and therefore $c^*(e)\leq c$ by definition.
If instead $f$ is of either combustion or bistable type, i.e.~\eqref{igni} or 
\eqref{bi} hold, we cannot deduce $m^-=0$ and infer that $v$ connects $0$ and 
$1$.
In these cases we resort to Lemma \ref{lem:comparison}.
Set  $\ol u(t,x):=v(t,x+cte)$. For $t\in\R$, letting $z(t)\in\Z^N$ be such that 
$cte-z(t)\in[0,1)^N$ and using the fact that $v$ verifies the first 
condition in~\eqref{ptf}, we~get, for $x\in\R^N$,
$$\ol u(t,x)=v\Big(t-\frac{z(t)\.e}c,x+cte-z(t)\Big)
=v\Big(\frac{cte-z(t)}c\.e,x+cte-z(t)\Big).$$
Hence, by \eqref{vto1}, $\ol u$ satisfies $\ol u(t,x)\to1$ as $x\.e\to-\infty$ 
uniformly in $t\in\R$, 
and then in particular \eqref{olu>}.
Next, consider the pulsating travelling front~$\t v$ in 
the direction $e$ connecting $0$ and $1$ 
(with speed $c^*(e)$), translated in time in such a way that 
$\t v(0,0)>v(0,0)$, and set $\ul u(t,x):=\t v(t,x+cte)$.
If we had $c<c^*(e)$, since
$$\ul u(t,x)=\t v(t,x+c^*(e)te-(c^*(e)-c)te)$$
and $\t v$ satisfies \eqref{gtf} with $X(t)=c^*(e)t$ and $S_1=0$ (and 
$S_2=1$), condition \eqref{ulu<others} would be fulfilled with 
$\gamma=c^*(e)-c>0$.
We could then apply Lemma \ref{lem:comparison} to $\ul u$ 
and $\ol u$, which satisfy \eqref{future} for $t<0$ with $q$ replaced by $q+ce$,
and 
deduce $\t v\leq v$ in $\R_-\times\R^N$, in contradiction with $\t 
v(0,0)>v(0,0)$.
Hence, $c^*(e)\leq c$ in cases \eqref{igni}, \eqref{bi} too.

{\em Case $c=0$.}\\
The $v_n$ satisfy \eqref{vmoving} with $c=c_n$, $e=e_n$. For $x\.e<-\sqrt{N}$ 
it holds that $z(x)\.e<0$, whence, for any $t\in\R$, $t-z(x)\.e_n/c_n>0$ for 
$n$ large enough because $c_n\searrow0$. It then follows from the fact that the 
$v_n$ are increasing in time and from \eqref{vn>M} that, for $t\in\R$ and 
$x\.e<-\sqrt{N}$,
$$v(t,x)=\limn 
v_n\left(t-\frac{z(x)\.e}{c_n},x-z(x)\right)
\geq \limn v_n(0,x-z(x))\geq M.$$
%
Thus, by Lemma \ref{lem:invasion}, $v(t,x)\to1$ as 
$x\.e\to-\infty$ uniformly in $t\leq0$, and then $\ol u=v$ fulfils 
\eqref{olu>}.
If $f$ satisfies either \eqref{igni} or \eqref{bi}, we get a 
contradiction as before applying Lemma \ref{lem:comparison} with $\ol u=v$ and 
$\ul u$ 
equal to the pulsating travelling front~$\t v$ in the direction $e$ connecting 
$0$ and $1$.
Suppose that $f$ satisfies \eqref{mono}. Setting $f(x,s)=0$ for $s<0$, we have 
that $f$ is of combustion type if considered on, say, $\R^N\times[-1,1]$. 
Namely, 
it satisfies hypothesis \eqref{igni} up to an affine transformation of the 
second variable. There exists then a pulsating travelling front in the 
direction $e$ connecting $-1$ and $1$ with a speed $c'>0$.
Let $\ul u$ be this front, normalized in such a way that $\ul u(0,0)>v(0,0)$.
It is an entire solution to \eqref{future} satisfying \eqref{ulu<mono}. We
therefore 
get a contradiction applying once again Lemma \ref{lem:comparison} with such 
$\ul u$ and $\ol u=v$.
\end{proof}

\begin{proposition}\label{pro:sub}
Under the assumptions of \thm{Freidlin}, the function $w$ defined
in~\eqref{W} is positive and continuous and $\W$ is  an \sub\
for~\eqref{future}.
\end{proposition}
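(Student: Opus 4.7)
The plan is to deduce the positivity and continuity of $w$ from Propositions \ref{pro:c*LSC} and \ref{pro:wC}, and then obtain that $\W$ is an asymptotic subset of spreading by applying \thm{intermediate} with $\ul c = c^*$.

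First, by Proposition \ref{pro:c*LSC}, $c^*$ is lower semicontinuous on the compact set $S^{N-1}$, and we know from \cite{pulsating,Xin91,Xin93} that $c^*(e)>0$ for every $e\in\Sph$. A lower semicontinuous, everywhere positive function on a compact set attains a positive minimum, so $\inf_{\Sph}c^*>0$. Proposition \ref{pro:wC} applied to $c=c^*$ then yields that $w$ is positive and continuous, hence bounded above by some constant. This verifies hypothesis \eqref{infc>0} of \thm{intermediate} with $\ul c:=c^*$.

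To apply \thm{intermediate} it remains to check that, for every $e\in\Sph$, every $c<c^*(e)$, every $\eta<1$ and every limiting equation \eqref{le} on $\R_-\times\R^N$, there exists a transition front $v$ in the direction $e$ connecting $0$ and $1$ with past speed strictly larger than $c$ and with $v(0,0)>\eta$. The key observation is that \eqref{future} is autonomous in time and periodic in space, so every limiting equation has coefficients of the form $A(\cdot+x^*)$, $q(\cdot+x^*)$, $f(\cdot+x^*,\cdot)$ for some $x^*\in[0,1]^N$; it is therefore obtained from the original equation by the spatial translation $x\mapsto x-x^*$. Hence, if $v_0$ is a \PTF\ for \eqref{future} in the direction $e$ with critical speed $c^*(e)$, the function $v_0(t,\cdot-x^*)$ is a \PTF\ for the limiting equation in the direction $e$, still with speed $c^*(e)$. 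Such a front satisfies the transition front condition \eqref{gtf} with $X(t)=c^*(e)\,t$, $S_1=0$, $S_2=1$, so its past speed equals $c^*(e)>c$. Finally, since the limiting equation is autonomous in time, an additional translation in time $v(t,x):=v_0(t+\tau,x-x^*)$ remains a \PTF\ with the same speed (the pulsating relation in \eqref{ptf} is preserved), and $\tau$ can be chosen so that $v(0,0)>\eta$, because $v_0(t,0)$ takes all values in $(0,1)$ as $t$ ranges over $\R$ in view of the limits in \eqref{ptf}.

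With all hypotheses of \thm{intermediate} verified, we conclude that the set $\W$ defined in \eqref{W} is an asymptotic subset of spreading for \eqref{future}. Each step is essentially a bookkeeping exercise; the only point that requires minimal care is the identification of the limiting equations as spatial translates of the original, which is where the periodicity of $A$, $q$, $f$ is used crucially to transfer the existence of \PTF s from \eqref{future} to every limiting equation. I expect no substantial obstacle, as the heavy lifting has already been done in Propositions \ref{pro:c*LSC}, \ref{pro:wC} and \thm{intermediate}.
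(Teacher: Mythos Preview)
Your approach is the same as the paper's in broad strokes, and it is correct for the combustion \eqref{igni} and bistable \eqref{bi} cases. However, there is a genuine gap in the monostable case \eqref{mono}.

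You state that it suffices to exhibit, for every limiting equation, a transition front in the direction $e$ connecting $0$ and $1$ with past speed larger than $c$. But read the hypothesis of \thm{intermediate} carefully: the front must connect $0$ and $1$ only \emph{if $f$ satisfies \eqref{others}}; otherwise it must connect some $-\e<0$ and $1$. The monostable assumption \eqref{mono} does not in general imply \eqref{others} (indeed $\max_x f(x,s)>0$ for all $s\in(0,1)$ rules out $f(x,\cdot)$ being nonincreasing near $0$ for every $x$). Tracing back through the proof of \thm{intermediate} and \thm{subset}, the reason the stronger requirement is needed is that when \eqref{others} fails one must verify the subsolution condition \eqref{v<mono}, namely $v(t,x)\leq0$ in a moving half-space; a \PTF\ connecting $0$ and $1$ is strictly positive everywhere and can never satisfy this.

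The paper fills this gap by extending $f(x,\cdot)$ by $0$ on $[-\e,0]$, so that it becomes of combustion type on $[-\e,1]$, and then invoking the existence of a \PTF\ $v_\e$ connecting $-\e$ and $1$ with some speed $c_\e>0$. The nontrivial step is to show that $c_\e\nearrow c^*(e)$ as $\e\searrow0$, which the paper does via two applications of Lemma~\ref{lem:comparison} and a compactness argument. Once this is known, for given $c<c^*(e)$ one chooses $\e$ small enough that $c_\e>c$, and the translated $v_\e$ provides the required front. Your plan should incorporate this construction in the monostable case; the combustion and bistable cases are handled exactly as you wrote.
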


\begin{proof}
The positivity and continuity of $w$ follow from Propositions
\ref{pro:wC}, \ref{pro:c*LSC} and the fact that $c^*$ is positive.
In order to apply \thm{intermediate} with $\ul c=c^*$, it remains to check the 
hypothesis concerning the existence of the pulsating travelling front $v$. To 
this end, 
fix $e\in S^{N-1}$, $c<c^*(e)$, $\eta<1$ and consider a limiting equation 
\eqref{le} associated with \eqref{future}. By periodicity, the coefficients 
of such equation are simply translations of $A,q,f$ by the same 
$\zeta\in[0,1)^N$. We can assume without loss of generality that
$\zeta=0$.

In the case where $f$ is of combustion type \eqref{igni} or bistable type \eqref{bi}, we take $v$ equal to the pulsating 
travelling front connecting $0$ and $1$ in the direction $e$, normalized in 
such a way that $v(0,0)>\eta$.

The monostable case \eqref{mono} is more involved.
Let $v^*$ be a pulsating travelling front connecting $0$ and $1$ in the 
direction $e$ with (the minimal) speed $c^*(e)$.
For $\e>0$, the nonlinearity $f:\R^N\times[-\e,1]\to\R$ is of combustion 
type and therefore there exists a unique $c_\e>0$ for which \eqref{future} 
admits a pulsating travelling front $v_\e$ in the direction $e$ connecting $-\e$ 
and $1$. We will show that 
\Fi{approxigni}
c_\e\nearrow c^*(e)\quad\text{ as }\ \e\searrow0.
\Ff
A similar property is proved in \cite{pulsating} using some estimates on the 
first derivatives of the fronts. Let us present a direct approach based on the 
comparison result of Lemma~\ref{lem:comparison}.
Recalling that $v^*$ and $v_\e$ satisfy \eqref{gtf} with $X(t)=c^*(e)t\,e$, 
$S_1=0$, $S_2=1$ and with $X(t)=c_\e t\,e$, $S_1=-\e$, $S_2=1$ respectively, we 
see that, if we had $c_\e>c^*(e)$ for some $\e>0$, Lemma~\ref{lem:comparison} 
would apply with $q$ replaced by $q+ce$ in equation \eqref{past-gen} and
$$\ol u(t,x)=v^*(t,x+c^*(e) te),\qquad \ul 
u(t,x)=v_\e(t,x+c^*(e) te),$$ 
yielding $v^*\geq v_\e$ in 
$\R_-\times\R^N$. This is impossible because, up to a suitable temporal 
translation, we can always reduce to the case where $v^*<v_\e$ at, say, 
$(0,0)$. Hence $c_\e\leq c^*(e)$.
It is clear that the conclusion of Lemma~\ref{lem:comparison} holds true
if the conditions $\ol u>0$ in \eqref{olu>}
and $\forall s>0$ in  \eqref{ulu<others}  are replaced by $\ol u>-\e$ and 
$\forall s>-\e$. We can therefore argue as before and infer that if 
$c_\e<c_{\e'}$ for some $0<\e<\e'$, then $v_\e\geq v_{\e'}$ in $\R_-\times\R^N$, 
and this is a contradiction up to a temporal translation of $v_\e$ or $v_{\e'}$.
As a consequence, $c_\e$ decreases to some value $c_0\in(0,c^*(e)]$ 
as $\e\searrow0$. Let us normalize the $v_\e$ by $v_\e(0,0)=1/2$.
As $\e\to0$, the $v_\e$ converge (up to subsequences) locally uniformly to an 
entire solution $v_0$ of \eqref{future} satisfying
$$v_0(0,0)=1/2,\qquad 0\leq v_0\leq1,\qquad \partial_t v\geq0.$$
Moreover, $v_0$ satisfies the first condition in \eqref{ptf} with $c=c_0$.
Then, the second condition follows exactly as in the case $c>0$ of the proof of 
the Proposition \ref{pro:c*LSC}. This means that $v_0$ is a pulsating 
travelling front
connecting $0$ and $1$ in the direction $e$, which implies that $c_*(e)\leq 
c_0$ by definition and concludes the proof of \eqref{approxigni}.
Finally, by \eqref{approxigni}, we can choose $\e>0$ small enough in such a way 
that
$c_\e\in(c,c_*(e)]$, and then the associated front $v_\e$, translated in $t$ in order to have $v_\e(0,0)>\eta$, satisfies the desired properties for $v$.
This concludes the proof of the proposition.
\end{proof}


\section{Asymptotic superset of spreading}\label{sec:super}



\begin{proof}[Proof of \thm{super}]
Let $w$ and $\W$ be as in \eqref{W+gen}. Because of \eqref{infc>>0}, 
Proposition~\ref{pro:wC} implies that $w$ is positive and continuous. It 
follows in 
particular that $\W$ coincides with the closure of its interior. It remains to 
verify that \eqref{W+} holds for any $u$ as in Definition \ref{def:W}. It is 
well known (see, e.g., \cite{Friedman})
that $u(1,x)$ decays as a Gaussian as $|x|\to\infty$, 
because the initial datum $u_0$ has compact support. 
Call
$$\eta:=\max_{x\in\R^N}u(1,x),$$
which is strictly less than $1$ by the parabolic strong maximum principle.
Take $e\in\Sph$. Let $v$ be the front given by the hypothesis of the 
theorem, associated with $e$,~$\eta$ and some~$R\geq0$ to be chosen.
Because of condition \eqref{v>}, the function~$v$ decays at most exponentially 
in the direction $e$. Namely, applying \cite[Lemma~3.1]{RR} to the function 
$\phi(t,x):=v(t,x+[R-1+\ol c(e)t] e)$ we infer the existence of a constant 
$\lambda>0$, only depending on $A,q,e,\ol c(e)$, such that
$$\forall x\.e>R+\ol c(e),\qquad
v(1,x)\geq \eta e^{-\lambda(x\.e-R+1-\ol c(e))}\geq \eta e^{-\lambda(x\.e+1)}.$$ 
Since, on the other hand, $v(1,x)\geq \eta$ for $x\.e\leq R+\ol c(e)$,
it follows from the Gaussian decay of $u(1,\.)$ that, choosing $R$ large 
enough, the  front $v$ satisfies 
$v(1,\.)\geq u(1,\.)$ in the whole $\R^N$.
As a consequence of the comparison principle we thus infer that $u\leq v$ for 
all $t\geq1$, whence, since $v$ satisfies~\eqref{gtf} with $S_1=0$, and 
$\limsup_{t\to+\infty} X(t)/t\leq \ol c(e)$, we get
\Fi{uto0}
\forall c>\ol c(e),\quad \sup_{x\.e>ct}u(t,x)\to0\quad\text{as 
}t\to+\infty.
\Ff 
Using this property in different directions $e$ one easily derives 
\eqref{c>w} with $w$ as in \eqref{W+gen}. But the uniform version of 
\eqref{c>w}, property \eqref{W+}, requires some additional work and in 
particular the continuity of $w$. We proceed as follows.

Fix $\e>\!0$ and $\xi\in\Sph$. By the definition of $w$ in \eqref{W+gen}, 
there 
is $e\in\Sph$ such~that $e\.\xi>0$ and $\ol c(e)/e\.\xi<w(\xi)+\e/3$.
For $\xi'\in\Sph$ close enough to $\xi$, there holds
$$(w(\xi)+2\e/3)\xi'\.e>(w(\xi)+\e/3)\xi\.e>\ol c(e).$$
Hence, by the continuity of $w$,
$(w(\xi')+\e)\xi'\.e>\ol c(e)$ provided $\xi'$ is in a small neighbourhood 
$U_\xi\subset\Sph$ of $\xi$. We can therefore make use of \eqref{uto0} and 
derive
$$\sup_{\su{\xi'\in U_\xi}{r\geq w(\xi')+\e}}
u(t,rt\xi')\to0\quad\text{as }t\to+\infty.$$
By compactness, there is a finite covering of $\Sph$ by sets of the type 
$U_\xi$, $\xi\in\Sph$, whence the above limit actually holds taking the $\sup$ 
among all $\xi'\in\Sph$. This concludes the proof of \eqref{W+}, because if 
$C$ is a closed set such that $C\cap\W=\emptyset$, then
$C\subset\{r\xi'\,:\,\xi'\in\Sph,\ r\geq w(\xi')+\e\}$ with $\e=\dist(C,\W)$.
\end{proof}

\begin{proof}[Proof of \thm{Freidlin}]
Let $\W$ be defined by \eqref{W}. By Proposition \ref{pro:sub} we know that 
$\W$ is an \sub\ for \eqref{future}. It remains to show that it is an \super\ 
too.
This is achieved using \thm{super}, showing that the minimal speed 
for \PTF s $c^*$ fulfils the hypotheses for $\ol c$ there. 
We already know that $\min c^*>0$ because $c^*$ is positive and it is lower 
semicontinuous by Proposition \ref{pro:c*LSC}. Fix 
$e\in\Sph$ and let $v$ be the \PTF\ in the direction $e$ connecting $0$ and $1$ 
with speed $c^*(e)$. We know from \eqref{ptf} that $v$ satisfies the transition 
front condition 
\eqref{gtf} with $S_1=0$, $S_2=1$ and $X(t)=c^*(e)t\,e$. Hence, for 
any $\eta<1$, there exists $L\in\R$ such that
$$\forall t\in\R,\ x\.e<L,\quad v(t,x+c^*(e)te)>\eta.$$
For given $R>1$, let $z\in\Z^N$ be such that $z\.e<L-R$. Hence, the 
translation $v^z$ of $v$ defined by $v^z(t,x):=v(t,x+z)$, which is still a 
\PTF\ for \eqref{future} with speed $c^*(e)$, satisfies
$$\forall t\in\R,\ x\.e-c^*(e)t\leq R,\quad
v^z(t,x)=v(t,x+z)>\eta,$$
because $(x+z-c^*(e)te)\.e\leq R+z\.e<L$. It follows that $v^z$ fulfils 
\eqref{v>}. We can therefore apply \thm{super} and conclude the proof.
\end{proof}


\section{Almost periodic, time-dependent equations}\label{sec:Shen}

In this section we deduce Corollary \ref{cor:Shen} from Theorems \ref{thm:intermediate}, \ref{thm:super}.
Here are the assumptions under which Shen derives the existence of fronts in
\cite{Shen-comb-ap} and \cite{Shen-bi2} respectively.
\begin{description}
\item[{ \normalfont\em Combustion\,: }]
$\exists \theta\in(0,1)$, $f(t,s)=0$ for $s\leq\theta$ and $s=1$,  $f(t,s)>0$ for $s\in(\theta,1)$.\\
	$f$ is of class $C^1$ with respect to $s\in[\theta,1]$ and there satisfies: $\inf_t\partial_s f(t,\theta)>0$,
	$\sup_t\partial_s f(t,1)<0$.
\item[{ \normalfont\em Bistable\,: }]
the equation $\vt'(t)=f(t,\vt(t))$ in $\R$ admits an a.p.~solution $0<\theta(t)<1$, and any other solution
satisfies $\vt(+\infty)=0$ if $\vt(0)<\theta(0)$ and $\vt(+\infty)=1$ if $\vt(0)>\theta(0)$.\\
$f\in C^2$ and its derivatives up to order 2 are a.p.~in $t$ uniformly in $s$.\\
$\sup_t\partial_s f(t,0)<0,\quad\sup_t\partial_s f(t,1)<0,\quad
\inf_t\partial_s f(t,\theta(t))>0.$
\end{description}

\begin{proof}[Proof of Corollary \ref{cor:Shen}]
For $e\in S^{N-1}$, let $v=v(t,x\.e)$ be the planar front provided by \cite{Shen-comb-ap,Shen-bi2} and~$X$ be the 
associated function in Definition \ref{def:gtf}. The functions
$X'(t)$ and $v(t,x\.e+X(t))$ are a.p.~in $t$ uniformly in~$x$, and $X'$ has uniform average $c^*$ in the sense of 
\eqref{avspeed}.
We want to show that the hypotheses of Theorems \ref{thm:intermediate}, \ref{thm:super} are fulfilled with $\ul c$ and $\ol c$ constantly equal to t $c^*$.
The front $v(t,x\.e)$ is a transition front in the direction $e$ with
future (and past) speed equal to $c^*$. Moreover, because of the space-invariance of the equation \eqref{t-dep}, we can translate $v$ in such a way that it fulfils~\eqref{v>} for any given $\eta<1$ and $R\in\R$.
Then, if $c^*>0$, \thm{super} implies that $\ol B_{c^*}$ is an \super.
Observe that if $c^*\leq0$ (which is possible in the bistable case), the same comparison argument as in the proof of \thm{super} implies that no solution with compactly supported datum can converge to $1$ as $t\to+\infty$, and thus 
the definition of asymptotic set of spreading is vacuously satisfied by any set.

It remains to show that $\ul c\equiv c^*$ satisfies the hypotheses of 
\thm{intermediate}. This is a consequence of the fact that, by the almost periodicity,
the limit of translations of a front preserves the average speed.
Consider indeed a limiting equation associated with~\eqref{t-dep}. By the almost periodicity of $f$, this equation is of the form $\partial_t u-\Delta u = f^*(t,u)$,
with $f^*$ obtained as the uniform limit of time-translations of $f$ by some diverging sequence $\seq{t}$.
By a priori estimates, the translations of the front $v(t+t_n,x\.e+X(t_n))$ converge (up to subsequences) locally uniformly to a solution $v^*(t,x\.e)$ of the limiting equation. On the other hand, the almost periodicity implies the existence
of $w$ and $c$ 
such that, as $n\to\infty$ (up to subsequences), there holds 
$$v(t+t_n,x\.e+X(t+t_n))\to w(t,x\.e),\qquad X'(t+t_n)\to c(t),$$
{\em uniformly} in $t\in\R$ and $x\in\R^N$, with $w(t,-\infty)=1$ and $w(t,+\infty)=0$ uniformly in $t\in\R$. 
Next, calling $Y(t):=\int_0^t c(s)ds$ we find that
\[\begin{split}
v^*(t,x\.e+Y(t))&=\limn v(t+t_n,x\.e+Y(t)+X(t_n))\\
&=\limn v(t+t_n,x\.e+\int_0^t\big(c(s)-X'(s+t_n)\big)ds+X(t+t_n))\\
&=w(t,x\.e),\end{split}\]
from which we deduce that $v^*$ is a transition front in the sense of Definition \ref{def:gtf} with $X=Y$. Finally, 
\eqref{avspeed} yields
$$\lim_{t\to\pm\infty}\frac{Y(t)}t=\lim_{t\to\pm\infty}\frac1t\int_0^t c(s)ds=
\lim_{t\to\pm\infty}\limn\frac1t\int_{t_n}^{t_n+t} X'(s)ds=c^*,$$
whence $v^*$ has past speed $c^*$.
It follows that, up to a suitable spatial translation, $v^*$ satisfies 
the hypotheses of \thm{intermediate} with $\ul c(e)=c^*$. 
\end{proof}


\section{Proof of the Metatheorem}\label{sec:meta}

In this section we prove the Metatheorem stated in the introduction.
The hypotheses required on the operator will be pointed out during the proof, marked by~``$\bullet$''.

Consider an equation 
\Fi{meta}
\mc{P}u=0,\quad t>0,\ x\in\R^N.
\Ff

\begin{itemize}
	\item For any $e\in S^{N-1}$, solutions with front-like initial data \eqref{f-l}
	admit the same \ass\ $c^*(e)$ in the direction $e$, and $\inf_{e}c^*(e)>0$.
\end{itemize}
Consider the Wulff shape of the function $w$ given by the Freidlin-G\"artner formula:
$$\W:=\{r\xi\,:\,\xi\in S^{N-1},\ \ 0\leq r\leq w(\xi)\},\qquad\
\text{with}\quad w(\xi):=\inf_{e\.\xi>0}\frac{c^*(e)}{e\.\xi}.$$
It follows from Proposition \ref{pro:wC} that $w$ is positive and continuous, whence
$\W$ coincides with the closure of its interior.
Let $u$ be a solution of~\eqref{meta} with a compactly supported initial datum $0\leq u_0\leq1$ such that $u(t,x)\to1$ as $t\to+\infty$
locally uniformly in $x\in\R^N$. 
\begin{itemize}
	\item The operator $\mc{P}$ satisfies the comparison principle.
\end{itemize}
The fact that $\W$ is an asymptotic superset of spreading is shown 
by comparing $u$ with solutions with front-like initial data\,\footnote{\ 
	The argument here is simpler because we are assuming $u_0<1$.}
and using the continuity of $w$, exactly as in Section \ref{sec:super}. 

Assume by contradiction that $\W$ is not an \sub. Recall that the definition \eqref{W-}
of \sub\ involves compact subsets of~$\W$. Thus, up to slightly shrinking $\W$ as in the proof of \thm{intermediate}, it is not restrictive to assume that $\W$ is smooth. 
\begin{itemize}
	\item $\mc{P}$ is space-time periodic: $(\mc{P}u)\circ\mc{T}_{Z}=\mc{P}(u\circ\mc{T}_{Z})$ for all 
	$Z\in\Z^{N+1}$, where $\mc{T}_Zu(t,x):=u((t,x)+Z)$.
	\item Sequences of solutions of \eqref{meta} satisfy a-priori estimates allowing to pass to the limit in the equation (up to subsequences).
\end{itemize}
Using a-priori estimates and proceeding exactly as in the proof of \thm{subset},
for any given $\eta<1$, we find a sequence of translations of $u$ converging to a function $u^*$ which is an {\em entire} solution of some limit equation $\mc{P}^*u^*=0$ and satisfies, in addition,
\Fi{u*f-l}
u^*(0,0)=\eta,\qquad
\forall T\geq 0,\ x\in \mc{H}_T, \quad u^*(-T,x)\geq\eta,
\Ff
where $\mc{H}_T$ is the half-space
$$\mc{H}_T:=\{x\in\R^N\,:\,x\.\nu(\hat x)<-k(\hat x\.\nu(\hat x))T\},$$
for some $k\in(0,1)$, $\hat x\in\partial\W$. 
By the periodicity of $\mc{P}$, the limit operator $\mc{P}^*$ is simply the translation of $\mc{P}$
by some $(\bar t,\bar x)\in[0,1)^{N+1}$, namely, $u^*$ is a solution of the equation
$\mc{P}(u^*\circ\mc{T}_{(-\bar t,-\bar x)})=0$.
Call $e:=\nu(\hat x)$ and, writing $\hat x=w(\xi)\xi$ with $\xi=\hat x/|\hat x|\in S^{N-1}$, 
$$c:=k\hat x\.e=k w(\xi)\xi\.e\leq kc^*(e)<c^*(e).$$ 
Let $\seq{z}$ in $\Z^N$ be such that 
$z_n-c(\bar t+n)e\in[0,1)^N$.
We then consider  the sequence of functions $\seq{u^*}$ defined by 
$u^*_n(t,x):=u^*(t-\bar t-n,x-\bar x-z_n)$. 
They are entire solutions of \eqref{meta}.
In order to rewrite the second condition in \eqref{u*f-l} in terms of $u^*_n$ we observe that, for $x\.e<\bar x\.e-\sqrt N$, there holds 
$$(x-\bar x-z_n)\.e<-z_n\.e-\sqrt N<-c(\bar t+n).$$
Hence, \eqref{u*f-l} rewrites
\Fi{u*nf-l}
u^*_n(\bar t+n,\bar x+z_n)=\eta,\qquad
\forall x\.e<\bar x\.e-\sqrt N, \quad u^*_n(0,x)\geq\eta.
\Ff
We claim that, taking $\eta\in(\max u_0,1)$, the $\seq{u^*}$ are {\em uniformly} front-like in the direction $e$, in the following sense:
$$\liminf_{x\.e\to-\infty}u^*_n(0,x)\geq 1\quad\text{ uniformly in }n\in\N.$$
Let us prove the claim.
Since $u$ invades, for any $\e>0$, there exists $m\in\N$ such that $u(m,0)>1-\e$.
 Let $R>0$ be such that $u_0(x)=0$ for $x\.e\geq R$. Then, 
 for  $\zeta\.e\geq R-\bar x\.e+\sqrt N$, since on one hand $u_0(x+\zeta)=0$ if
 $x\.e\geq\bar x\.e-\sqrt N$, and on the other $u^*_n(0,x)\geq\eta>\max u_0$ 
 if $x\.e<\bar x\.e-\sqrt N$ by~\eqref{u*nf-l}, we derive
 $$\forall n\in\N,\ x\in\R^N,\quad u_0(x+\zeta)\leq u^*_{n+m}(0,x).$$
Hence, recalling that the $\seq{u^*}$ are solutions of
\eqref{meta}, the comparison principle yields
$$\forall n\in\N,\ \zeta\.e\geq R-\bar x\.e+\sqrt N,\quad u^*_{n+m}(m,-\zeta)
\geq u(m,0)>1-\e.$$
Since $u^*_{n+m}(m,-\zeta)=u^*_n(0,-\zeta-z_{n+m}+z_n)$ and
$|z_n-z_{n+m}|<\sqrt N$, we deduce
$$\forall n\in\N,\ x\.e\leq-R+\bar x\.e-2\sqrt N,\quad u^*_n(0,x)>1-\e.$$
The claim is thereby proven. 
Then, using the claim, we can find a continuous function $\chi:\R\to[0,1]$ such that $\chi(-\infty)=1$, 
$\chi=0$ in $\R_+$ and
$$\forall n\in\N,\ x\in\R^N,\quad
u^*_n(0,x)\geq\chi(x\.e).$$
Owing again to the comparison principle, the solution $v$ of \eqref{meta} with initial datum $\chi(x\.e)$ satisfies
$v(t,x)\leq u^*_n(t,x)$ for all $n\in\N$, $t>0$, $x\in\R^N$.
We now use the hypothesis about solutions with front-like initial data.
Namely,  since  $\chi(x\.e)$ is a front-like initial datum in the sense of \eqref{f-l},
we know that $v$ has \ass\ $c^*(e)$ in the direction $e$. Thus, because $c\in[0,c^*(e))$, we have that $v(t,x+cte)\to1$ as $t\to+\infty$, 
locally uniformly in $x\in\R^N$. Consequently,
$$u^*_n(\bar t+n,\bar x+z_n)\geq 
v(\bar t+n,\bar x+z_n)\to 1\quad\text{as }n\to\infty,$$
because $\bar x$ and $z_n-c(\bar t+n)e$ belong to $[0,1)^N$.
This contradicts the first condition in~\eqref{u*nf-l}. The proof of the Metatheorem
is thereby complete.


\section{Multilevel speeds of propagation}\label{sec:terrace}

This section is dedicated to the proof of \thm{terrace}. Observe preliminarily 
that the hypothesis \eqref{DGM} yields
\Fi{f>0}
\exists S\in(0,1),\quad f>0\quad\text{in }(S,1),
\Ff
because otherwise there would exist constant supersolutions of \eqref{homo}
arbitrarily close to $1$, preventing any solution smaller than $1$ from invading.

The quantities in \thm{terrace} come from the one-dimensional result of \cite{Terrace}.
This asserts that, under the hypotheses \eqref{hyp-homo}-\eqref{DGM},
the solution $v$ of \eqref{homo} in dimension $N=1$ with initial datum 
$v_0=\1_{(-\infty,0]}$ converges to the minimal {\em propagating terrace} 
connecting $1$ to $0$.
As a consequence, there exist $\t M\in\N$, $0=\t \theta_0<\cdots<\t \theta_{\t M}=1$ and 
$\t c_1\geq\cdots\geq \t c_M>0$ 
such that
$$
\forall c>\t c_m,\quad
\limsup_{t\to+\infty}\bigg(\sup_{x\geq  ct}v(t,x)\bigg)\leq \t \theta_{m-1},
\qquad
\forall c<\t c_m,\quad
\liminf_{t\to+\infty}\bigg(\inf_{x\leq  ct}v(t,x)\bigg)\geq \t \theta_m.
$$
Moreover, for any $m=1,\dots,\t M$, there exists a planar wave connecting 
$\t\theta_m$ to $\t\theta_{m-1}$ with speed $\t c_m$, that is, a strictly 
decreasing solution 
of the type $\phi(x-\t c_m t)$ with $\phi(-\infty)=\t\theta_m$,
$\phi(+\infty)=\t\theta_{m-1}$. We actually need an extension to more general
initial data, provided by \cite[Theorems~1.1, 2.14(v)]{Terracik}. Namely,
letting $S$ be from \eqref{f>0}, the result holds true for $v_0$ satisfying
\Fi{f-lcik}
v_0(-\infty)\in(S,1],\qquad v_0=0\ \text{ in }\R_+,\qquad v_0\ \text{ nonincreasing}.
\Ff
Then, the sequences $(\theta_m)_{m=1,\dots,M}$, $(c_m)_{m=1,\dots,M}$ in \thm{terrace} are obtained 
removing from $(\t\theta_m)_{m=1,\dots,\t M}$, $(\t c_m)_{m=1,\dots,\t M}$
all the elements $\t \theta_m$, $\t c_m$ such that $\t c_m=\t c_n$ for some $n>m$.
Notice that, in such way, $(c_m)_{m=1,\dots,M}$ is strictly decreasing and 
$v$ satisfies \eqref{c>cm}-\eqref{c<cm} for $x\geq0$.
%
%
%
%

Now, in order make the arguments of the proof of the Metatheorem work we just 
need the above $1$-dimensional spreading result to hold true with $1$ replaced by any 
of the levels $\theta_m$. Namely, we need the analogue of hypothesis
\eqref{DGM}.
\begin{proposition}\label{w}
For any $m\in\{1,\dots,M\}$, there exists a solution $w$ of \eqref{homo}
in dimension $N\!=\!1$, having a compactly supported, continuous
 initial datum $0\!\leq\!w_0\!<\!\theta_m$, such that $w(t,x)\to\theta_m$ as $t\to+\infty$.
\end{proposition}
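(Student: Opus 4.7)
My proof plan is to construct $w_0$ as a symmetric, compactly supported, subthreshold truncation of the one-dimensional terrace solution, and to obtain the convergence $w \to \theta_m$ by bounding $w$ above by $\theta_m$ (via the constant supersolution) and below via the extended monotone-data terrace theorem of \cite{Terracik} applied to a suitable auxiliary datum on the half-line.

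Let $v$ denote the solution of \eqref{homo} in $N=1$ starting from $\1_{(-\infty,0]}$. By the one-dimensional terrace result of \cite{Terrace, Terracik}, $v(t,ct) \to \theta_m$ as $t\to+\infty$ for any $c \in (c_{m+1}, c_m)$ (with the convention $c_{M+1} := 0$); moreover, $v(t,\cdot)$ develops a plateau close to level $\theta_m$ over an interval of length growing like $(c_m - c_{m+1})t$. Fixing $c_\ast \in (c_{m+1}, c_m)$, $\delta > 0$ small, and $T$ large, I define
\[
w_0(x) := \Bigl(\min\bigl(v(T, x + c_\ast T),\, v(T, -x + c_\ast T)\bigr) - \delta\Bigr)_+ .
\]
By monotonicity and limits of $v(T,\cdot)$, the function $w_0$ is continuous, symmetric about $0$, nonincreasing on $[0,+\infty)$, compactly supported, and, for $T$ large, satisfies $0 \leq w_0 \leq v(T,c_\ast T) - \delta < \theta_m$.

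The upper bound $w \leq \theta_m$ on the associated solution is immediate by comparison with the constant (stationary) solution $\theta_m$, using $f(\theta_m)=0$. For the lower bound, the plan is to exploit the symmetry and monotonicity of $w_0$: by the parabolic strong maximum principle applied to $w(t,x)-w(t,-x)$ and to $\partial_x w$, the solution $w(t,\cdot)$ remains symmetric about $0$ and nonincreasing on $[0,+\infty)$. Extending $w_0$ monotonically to the left by $\hat w_0(x) := w_0(\max(x,0))$ yields a nonincreasing front-like datum falling in the scope of \cite[Theorems~1.1,~2.14(v)]{Terracik}. To ensure that the terrace of $\hat w$ caps at $\theta_m$ rather than climbing to $1$, I would replace $f$ by a $C^1$ modification $\tilde f$ that agrees with $f$ on $[0,\theta_m]$ and is nonpositive above $\theta_m$; since both $w$ and $\hat w$ are bounded by $\theta_m$, their dynamics are unaffected. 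Then the terrace of $\hat w$ relative to $\tilde f$ has $\theta_m$ as its top level, so $\hat w(t,x)\to\theta_m$ locally uniformly, and a reflection-plus-comparison argument using the symmetry of $w$ transfers this limit to $w$ itself.

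The hard part is to verify that the extended monotone terrace theorem of \cite{Terracik} applies to $\tilde f$ with the truncated-at-$\theta_m$ terrace, and in particular that an analog of \eqref{DGM} is available for $\tilde f$ at level $\theta_m$. This is bootstrapped from the existence of the planar wave $\phi_m$ connecting $\theta_m$ to $\theta_{m-1}$ with positive speed $c_m$ (which remains a wave for $\tilde f$), together with the fact that the near-$\theta_m$ plateau in the constructed $w_0$ can be made arbitrarily wide by increasing $T$; the symmetrized, truncated $w_0$ above then itself plays the role of the required $\tilde f$-invader, closing the bootstrap.
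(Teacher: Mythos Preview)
Your argument contains a genuine circularity. To invoke the monotone-data terrace theorem of \cite{Terracik} for $\hat w$ with the modified reaction $\tilde f$, you need the analogue of hypothesis \eqref{DGM} at level $\theta_m$: the existence of a compactly supported datum below $\theta_m$ whose evolution converges to $\theta_m$. Since $\tilde f=f$ on $[0,\theta_m]$ and any such solution stays below $\theta_m$ by comparison, this hypothesis is \emph{equivalent} to Proposition~\ref{w} itself. Your closing sentence, that ``$w_0$ itself plays the role of the required $\tilde f$-invader'', simply assumes the conclusion. The two ingredients you invoke to break the loop---the wave $\phi_m$ and an arbitrarily wide plateau in $w_0$---do not suffice when $m>1$: the profile $\phi_m$ connects $\theta_m$ to $\theta_{m-1}>0$, so no translate of $\phi_m$ can sit below a compactly supported $w_0$ (which vanishes outside a bounded interval), and you produce no other lower barrier tending to $\theta_m$.

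There is a secondary problem with the ``reflection-plus-comparison'' step: it points the wrong way. Since $\hat w_0\geq w_0$, comparison gives $\hat w\geq w$ on $\R$; and on the half-line $[0,\infty)$, where the symmetric $w$ solves the Neumann problem, the monotone $\hat w$ satisfies $\partial_x\hat w(t,0)\leq0$, which makes it a \emph{supersolution} of that Neumann problem, again yielding $\hat w\geq w$. So even if $\hat w\to\theta_m$ were established, it would only reproduce the upper bound you already have from the constant $\theta_m$.

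The paper bypasses both issues by building, for each $m<M$, a compactly supported \emph{stationary subsolution} $w_0$ with $\max w_0\in(S_m,\theta_m)$, via an ODE comparison (Lemma~\ref{lem:q}). One launches the solution $q$ of $q''+f(q)=0$ with the same Cauchy data as the wave profile connecting $\theta_m$ to the next level down; the lemma forces $q$ to overshoot below the foot of that wave on the right and to stay below $\theta_m$ on the left. Iterating the comparison through the successive wave profiles of the terrace cascades $q$ down to $0$ at some finite point, and symmetrising about its maximum yields a compactly supported subsolution. The evolution from this $w_0$ is then increasing in $t$, and a short sliding argument pins the limit at the constant $\theta_m$. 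This direct construction is precisely the missing idea in your bootstrap.
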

The first ingredient to prove Proposition \ref{w} is the stability of the $\theta_m$ from below:
\Fi{thetam}
\forall m\in\{1,\dots,M\},\quad\exists S_m\in (\theta_{m-1},\theta_m),\quad
f>0\quad\text{in }\ (S_m,\theta_m).
  \Ff
This property in the case $m=M$ is just \eqref{f>0}, whereas the other
cases are provided by \cite[Lemma~4.3]{Terrace}.   
The second ingredient is the following ODE~result.

\begin{lemma}\label{lem:q}
Let $\phi$ be a bounded, strictly decreasing solution of
$\phi''+c\phi'+f(\phi)=0$ in $\R$, with $c>0$,
and let $q$ be a solution of $q''+f(q)=0$ in $\R$ satisfying $q(0)=\phi(0)$ and 
$q'(0)\leq\phi'(0)$ (resp. $\phi'(0)\leq q'(0)<0$). Then 
$$\inf_{\R_+}q<\phi(+\infty),\qquad 
\text{(resp. }\sup_{\R_-}q<\phi(-\infty)\text{)}.$$
\end{lemma}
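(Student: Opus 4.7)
The plan is to exploit the Hamiltonian structure of the two ODEs. Set $F(u):=\int_0^u f(r)\,dr$ and consider the energies
\[
E_\phi(s):=\tfrac{1}{2}(\phi'(s))^2+F(\phi(s)),\qquad E_q(s):=\tfrac{1}{2}(q'(s))^2+F(q(s)).
\]
A direct computation gives $E_\phi'(s)=-c(\phi'(s))^2<0$ and $E_q'(s)\equiv 0$, so $E_\phi$ is strictly decreasing on $\R$ while $E_q$ is a constant which I will still denote by $E_q$. Since $\phi$ is bounded and strictly monotone, standard ODE arguments give $\phi'(\pm\infty)=0$ and $\phi(\pm\infty)\in f^{-1}(0)$, so $E_\phi(\pm\infty)=F(\phi(\pm\infty))$; combined with the strict monotonicity of $E_\phi$, this yields $F(\phi(+\infty))<E_\phi(s)<F(\phi(-\infty))$ for every $s\in\R$.

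For the main case $q'(0)\le\phi'(0)<0$, squaring yields $(q'(0))^2\ge(\phi'(0))^2$, hence $E_q\ge E_\phi(0)>F(\phi(+\infty))$. The crucial observation is that $F$ stays strictly below $E_q$ on the whole interval $[\phi(+\infty),\phi(0)]$: every $u$ in this interval is either $\phi(+\infty)$ itself (for which $F(u)=E_\phi(+\infty)<E_\phi(0)\le E_q$) or equals $\phi(s_u)$ for some finite $s_u\ge 0$, in which case $\phi'(s_u)<0$ gives
\[
F(u)=E_\phi(s_u)-\tfrac{1}{2}(\phi'(s_u))^2<E_\phi(s_u)\le E_\phi(0)\le E_q.
\]
Continuity of $F$ on this compact interval then produces $\eta:=E_q-\max_{[\phi(+\infty),\phi(0)]}F>0$.

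Energy conservation for $q$ reads $|q'(s)|^2=2(E_q-F(q(s)))\ge 2\eta$ as long as $q(s)\in[\phi(+\infty),\phi(0)]$. Together with $q(0)=\phi(0)$ and $q'(0)<0$, this forces $q$ to decrease strictly, at rate at least $\sqrt{2\eta}$, on the maximal forward interval where it stays in $[\phi(+\infty),\phi(0)]$; in particular $q$ reaches the value $\phi(+\infty)$ in finite time and, since its velocity is still at most $-\sqrt{2\eta}$ there, passes strictly below it, yielding $\inf_{\R_+}q<\phi(+\infty)$.

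The `resp.'\ case is handled by time-reversal: $\tilde q(\tau):=q(-\tau)$ solves the same autonomous ODE, has $\tilde q(0)=\phi(0)$ and $\tilde q'(0)>0$, and conserves the same $E_q$. Now $\phi'(0)\le q'(0)<0$ flips the inequality to $E_q\le E_\phi(0)<F(\phi(-\infty))$, so the intermediate value theorem delivers a smallest $q_+\in(\phi(0),\phi(-\infty))$ with $F(q_+)=E_q$; the minimality of $q_+$ forces $f(q_+)\ge 0$. Energy conservation then confines $\tilde q$ to $[\phi(0),q_+]$ for all $\tau\ge 0$, either by reflection at $q_+$ if $f(q_+)>0$ or by asymptotic approach to the equilibrium $(q_+,0)$ if $f(q_+)=0$; hence $\sup_{\R_-}q\le q_+<\phi(-\infty)$. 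The one delicate point of the whole argument is exactly the strict gap $F<E_q$ on the relevant interval, which is precisely what the existence of the traveling wave $\phi$ guarantees: $\phi$ must cross every such level with strictly nonzero velocity, pinning $F(u)$ below $E_\phi(s_u)$, and hence below $E_q$.
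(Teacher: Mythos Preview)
Your argument is correct and is a genuinely different route from the paper's. The paper works in the phase plane: it parametrises the velocities by the value of the solution, setting $\Phi(u):=\phi'(\phi^{-1}(u))$ and $Q(u):=q'(q^{-1}(u))$, and derives the identity
\[
Q^2(u)-\Phi^2(u)=(q'(0))^2-(\phi'(0))^2+2c\int_{\eta}^u\Phi,
\]
from which both cases follow by looking at the limiting behaviour of this quantity as $u$ approaches the relevant endpoint. Your proof instead exploits the Hamiltonian structure directly via the energies $E_\phi$ and $E_q$, using that $E_\phi$ is strictly decreasing while $E_q$ is conserved. The two are of course related---your inequality $F(u)<E_q$ on $[\phi(+\infty),\phi(0)]$ is what the paper's identity encodes after integrating $\Phi$---but your packaging is more transparent and avoids inverting $q$ on a maximal monotonicity interval. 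The paper's approach, on the other hand, handles both cases with a single computation, whereas you argue them separately; in the `resp.' case the paper deduces a contradiction from the unboundedness of $\phi'$, while you locate the turning level $q_+$ explicitly.

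One small imprecision: in the `resp.' case you write that energy conservation ``confines $\tilde q$ to $[\phi(0),q_+]$ for all $\tau\ge 0$''. This is not literally true---after reflection at $q_+$ (when $f(q_+)>0$), by the time-symmetry of the autonomous equation $\tilde q$ retraces its path, returns to $\phi(0)$ at time $2\tau_*$ with negative velocity, and then drops below $\phi(0)$. What \emph{is} true, and what you actually need, is that $\tilde q\le q_+$ for all $\tau\ge 0$: the level set $\{(\tilde q')^2=2(E_q-F(\tilde q))\}$ prevents $\tilde q$ from ever exceeding $q_+$, since any crossing would require $\tilde q'=0$ there, at which point the solution either reflects ($f(q_+)>0$) or coincides with the equilibrium and cannot have started at $\phi(0)$ ($f(q_+)=0$). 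So the conclusion $\sup_{\R_-}q\le q_+<\phi(-\infty)$ stands; just tighten the wording.
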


\begin{proof}
Observe preliminarily that $\phi'$ is a nonpositive solution of $(\phi')''+c(\phi')'+f(\phi)'\phi'=0$,
thus it cannot vanish anywhere because otherwise it would be identically equal to $0$.
Direct computation reveals that the function $\Phi(u):=\phi'(\phi^{-1}(u))$	solves the equation
$\Phi'\Phi+c\Phi+f(u)=0$ for $u\in(\phi(-\infty),\phi(+\infty))$.

Let $q$ satisfy one of the two hypotheses of the lemma. In both case we have that $q'(0)<0$, because $\phi'(0)<0$.
Let $(x_1,x_2)$ be the largest interval  (possibly unbounded) containing $0$ in which $q'<0$. Then call $\eta:=\phi(0)=q(0)$ and 
$$\alpha:=\max\{q(x_2),\phi(+\infty)\}<\eta<\beta:=\min\{q(x_1),\phi(-\infty)\}.$$
The function $Q(u):=q'(q^{-1}(u))$ is a solution of $Q'Q+f(u)=0$ for $u\in(\alpha,\beta)$.
Subtracting the equations for $Q$ and $\Phi$ and integrating between $\eta$ and $u\in(\alpha,\beta)$~yields
\Fi{Q}
Q^2(u)-\Phi^2(u)=Q^2(\eta)-\Phi^2(\eta)+2c\int_{\eta}^u\Phi=
(q'(0))^2-(\phi'(0))^2+2c\int_{\eta}^u\Phi.
\Ff
Suppose that $q'(0)\leq\phi'(0)$. Then, by \eqref{Q},
$$\liminf_{u\to\alpha^+}Q^2(u)\geq 2c\int_{\eta}^\alpha\Phi>0.$$
If $q(x_2)\geq\phi(+\infty)$ then $\alpha=q(x_2)$ and the above inequality rewrites
$$\liminf_{x\to x_2}(q'(x))^2>0,$$
which contradicts the definition of $x_2$. As a consequence, 
$\inf_{\R_+}q\leq q(x_2)<\phi(+\infty)$.

Consider now the case $\phi'(0)\leq q'(0)<0$. Using again \eqref{Q} we get 
$$\liminf_{u\to\beta^-} \Phi^2(u)\geq\liminf_{u\to\beta^-}Q^2(u)-
2c\int_{\eta}^\beta\Phi\geq-2c\int_{\eta}^\beta\Phi>0.$$
If $q(x_1)\geq\phi(-\infty)$ then $\beta=\phi(-\infty)$ and therefore we have that
$$\liminf_{x\to -\infty}(\phi'(x))^2>0,$$
which is impossible because $\phi$ is bounded.
This shows that $q(x_1)<\phi(-\infty)$. If $x_1=-\infty$ we are done. Otherwise 
we necessarily have that $q'(x_1)=0$. It follows from the uniqueness for the Cauchy problem
that $q(x_1+\.)$ is even and then $q$ has a strict local maximum point at $x_1$.
If $q$ does not have other stationary points then $x_1$ is a global maximum point.
Otherwise $q$ is even with respect to the other stationary point too
and therefore it is periodic, whence $x_1$ is again a global maximum point.
We have thereby shown that $\sup_{\R_-}q=q(x_1)<\phi(-\infty)$.
\end{proof}

\begin{proof}[Proof of Proposition \ref{w}]
If $m=M$ the statement reduces to the hypothesis \eqref{DGM}.
Let $m<M$. Using the same notation as in the beginning of this section, we have that 
$\theta_m=\t \theta_j$ for some $j<\t M$.
Consider the profile $\phi$ of the planar wave connecting 
$\t\theta_j$ to $\t\theta_{j-1}$ translated in such a way that $\phi(0)\in
(S_m,\theta_m)$, where $S_m$ is from \eqref{thetam}.
Then let $q$ be the solution of $q''+f(q)=0$ in $\R$ with $q(0)=\phi(0)$
$q'(0)=\phi'(0)$. It follows from Lemma \ref{lem:q} that $\sup_{\R_-}q<\theta_m$ and 
$\inf_{\R_+}q<\t\theta_{j-1}$. From $\sup_{\R_-}q<\theta_m$ and $f>0$ in 
$(S_m,\theta_m)$ one readily sees that $q$ cannot be decreasing in the whole $\R_-$,
and then there exists $x_1<0$ such that $q'(x_1)=0$. Namely, $q(x_1+\.)$ is even.
In the case $j=1$, $\inf_{\R_+}q<\t\theta_{j-1}$ means that $q$ vanishes at some positive point. 
If $j>1$, it implies that $q$ intersects at some $x^j>0$ a suitable translation of the 
profile $\t\phi$ of the wave connecting $\t\theta_{j-1}$ to $\t\theta_{j-2}$ . 
Then, again by Lemma \ref{lem:q}, $\sup_{(x^j,+\infty)}q<\t\theta_{j-2}$.
We can repeat this argument until we find that $q$ vanishes at some positive point.
Let $x_2$ be the smallest of such points. Then define
$$
w_0(x)=\begin{cases}
q(x) & \text{for }x\in(2x_1-x_2,x_2)\\
0 & \text{otherwise}.
\end{cases}$$
This is a generalised subsolution of $q''+f(q)=0$. Hence, the solution $w$  of \eqref{homo} with $N=1$
emerging from $w_0$ is strictly increasing in $t$. As $t\to+\infty$, 
it converges locally uniformly to some limit $\ol w(x)$ satisfying
$w_0<\ol w\leq\theta_m$ and $\ol w''+f(\ol w)=0$. 
We show that $\ol w\equiv\theta_m$ using a variant of the sliding method.
First, there exists $h_0>0$ such that
$\ol w(x)\geq w_0(x\pm h)$ for all $h\in[0,h_0]$.
Then, by comparison, the evolutions by \eqref{homo} of the initial data $\ol w(x)$ and
$w_0(x\pm h)$ remain ordered, that is, $\ol w(x)\leq w(t,x\pm h)$. Letting $t\to+\infty$ we 
eventually derive $\ol w(x)\geq \ol w(x\pm h)$ for all $h\in[0,h_0]$, which
 means that $\ol w$ is constant. We deduce that $f(\ol w)=0$ from which, recalling that
$S_m<w_0(0)<\ol w\leq\theta_m$ and that $f>0$ in $(S_m,\theta_m)$,
we eventually get~$\ol w\equiv\theta_m$.
\end{proof}

\begin{proof}[Proof of \thm{terrace}]
Let $u$ be a bounded solution with a compactly supported
initial datum $0\leq u_0\leq 1$ such that $u(t,x)\to1$ as $t\to+\infty$
locally uniformly in $x\in\R^N$.

The upper bound \eqref{c>cm} is a direct consequence of the $1$-dimensional result of \cite{Terrace}
reclaimed at the beginning of the section. Namely, take $R>0$ such that $\supp u_0\subset B_R$. 
Then, the solution $v$ of \eqref{homo} in dimension 1 with initial
datum $\1_{(-\infty,R]}$ satisfies
$$
\forall c>c_m,\quad
\limsup_{t\to+\infty}\bigg(\sup_{|x|\geq  ct}v(t,|x|)\bigg)\leq \theta_{m-1}.
$$
Moreover, by comparison, $u(t,x)\leq v(t,x\.e)$ for all $e\in S^{N-1}$,
whence $u(t,x)\leq v(t,|x|)$. It follows that $u$ satisfies \eqref{c>cm}.

%
%
%
Let us prove the lower bound.
Assume by contradiction that \eqref{c<cm} does not hold for some $m$.
Then, proceeding exactly as in the proof of \thm{subset}, for any
$\eta<\theta_m$ close enough to $\theta_m$,  we find a
sequence of translations of $u$ converging to a function $u^*$ which is an 
entire solution of the same equation \eqref{homo} and satisfies in addition
$$u^*(0,0)=\eta,\qquad
\forall T\geq 0,\ x\.e<-cT, \quad u^*(-T,x)\geq\eta,$$
for some $e\in S^{N-1}$ and $c<c_m$. 
It is not restrictive to assume that $\eta>S_m$, where $S_m$ is given by \eqref{thetam}.
%
Define the family of functions $(u^*_T)_{T\geq0}$ by
$u^*_T(t,x):=u^*(t-T,x-cTe)$. They satisfy $u^*_T(0,x)\geq \eta$ for $x\.e<0$.
Consider a continuous, nonincreasing function 
$v_0:\R\to\R$ such that $v_0(-\infty)\in(S_m,\theta_m)$, $v_0=0$ in $\R_+$  and
$$\forall T\geq0,\ x\in\R^N,\quad
u^*_T(0,x)\geq v_0(x\.e).$$
Let $v$ be the solution of~\eqref{homo} in dimension $N=1$ with initial datum $v_0$.
From one hand, the comparison principle yields $v(t,x\.e)\leq u^*_T(t,x)$ for all $T\geq0$, $t>0$, $x\in\R^N$.
From the other, $v_0$ fulfils the hypothesis \eqref{f-lcik} 
with $(S,1]$ replaced by $(S_m,\theta_m)$. Thus, owing to Proposition \ref{w}, 
we are in the hypotheses of \cite[Theorems~1.1, 2.14(v)]{Terracik} and 
therefore we know that the solution $v$
converges to the minimal propagating terrace connecting $\theta_m$ to~$0$.
Thanks to \cite[Theorem 1.10(ii)]{Terrace}, this terrace has the same
levels~$\t \theta_j$ and speeds $\t c_j$ as the minimal terrace connecting $1$ to $0$ 
up to the level~$\theta_m$.
In particular, because $c<c_m$, we deduce that
$$\liminf_{t\to+\infty}v(t,ct)\geq\theta_m.$$
Consequently, observing that $u^*(0,0)=u^*_T(T,cTe)\geq v(T,cT)$, 
we eventually infer that $u^*(0,0)\geq\theta_m$,
which is a contradiction because $u^*(0,0)=\eta<\theta_m$.
\end{proof}


\appendix
 
\setcounter{section}{1}
\setcounter{theorem}{0}
\section*{Appendix}

\begin{proof}[Proof of Lemma~\ref{lem:comparison}]
We consider the perturbations $(\ol u^\e)_{\e>0}$ of $\ol u$ defined by 
$\ol u^\e(t,x):=\ol u(t,x)+\e$.
By hypothesis, for $\e>0$, there exists $T_\e\leq0$ such that
$\ol u^\e(t,x)>\ul u(t,x)$ for all $t\leq T_\e$, $x\in\R^N$.
Assume by contradiction that there is $\e_0>0$ such that
$$\forall\e\in(0,\e_0),\quad\exists t\in(T_\e,0],x\in\R^N,\quad \ol u^\e(t,x)
<\ul u(t,x),$$
otherwise the lemma is proved by letting $\e\to0$.
For $\e\in(0,\e_0)$, let $t_\e\in[T_\e,0)$ be the infimum of $t$ for which 
$u^\e(t,x)<\ul u(t,x)$ for 
some $x\in\R^N$. 
Thus, $\ol u^\e\geq\ul u$ if $t\leq t_\e$ and, by the uniform continuity of 
$\ul u$ and $\ol u$, $\inf_{x\in\R^N}(\ol u^\e-\ul u)(t_\e,x)=0$.
The hypotheses on $\ul u$ and $\ol u$ imply the existence of some 
$\rho_\e\in\R$ such that
\Fi{rho}
\inf_{x\.e=\rho_\e}(\ol u^\e-\ul u)(t_\e,x)=0.
\Ff
We distinguishing three possible situations.

{\em Case 1) \  $(\rho_\e)_{\e\in(0,\e_0)}$ is bounded.}\\
Let $(x_\e)_{\e\in(0,\e_0)}$ be such that 
$$x_\e\.e=\rho_\e,\qquad\ol u^\e(t_\e,x_\e)-\ul u(t_\e,x_\e)<\e.$$
The functions $\ul u^\e(\.+t_\e,\.+x_\e)$ and $\ol u(\.+t_\e,\.+x_\e)$ converge 
(up to 
subsequences) locally uniformly, as $\e\to0$, respectively to a subsolution 
$u_*$ and a supersolution $u^*$ of a limiting equation \eqref{le} on 
$\R_-\times\R^N$ satisfying 
$$\ol u^*(0,0)=\ul u^*(0,0),\qquad \forall 
t\leq0,\ x\in\R^N,\quad \ol u^*(t,x)\geq\ul u^*(t,x),$$
where the last inequality holds because $\ol u^\e\geq\ul u$ if $t\leq t_\e$.
The strong comparison principle then yields $\ol u^*=\ul u^*$ in 
$\R_-\times\R^N$. But the boundedness of $x_\e\.e=\rho_\e$ for 
$\e\in(0,\e_0)$ implies 
on one hand that $\liminf_{x\.e\to-\infty}\ol u^*(t,x)\geq1$ uniformly in 
$t\leq0$, 
by~\eqref{olu>}, and on the other that
$$\forall x\in\R^N,\quad\limsup_{t\to-\infty}\ul u^*(t,x)\leq0,$$
by \eqref{ulu<mono} or \eqref{ulu<others}. This case is thereby ruled out.

{\em Case 2) \  $\inf_{\e\in(0,\e_0)}\rho_\e=-\infty$.}\\
Let $S$ be from \eqref{f-gen}, and take $\e\in(0,\e_0)$ such that $-\rho_\e$ is 
large enough to have
$$\inf_{\su{t<0}{x\.e\leq\rho_\e+1}}\ol u(t,x)>S.$$
It follows from the second condition in \eqref{f-gen} that $\ol u^\e$ is a 
supersolution of \eqref{past-gen} for 
$x\in\O:=\{x\,:\,x\.e<\rho_\e+1\}$.
By \eqref{rho}, there is a sequence $\seq{y}$ in $\R^N$ 
such that
$$y_n\.e=0,\qquad \limn(\ol u^\e-\ul u)(t_\e,y_n+\rho_\e e)=0.$$
Passing to the limit on a subsequence of spatial 
translations by $(y_n)_{n\in\N}$ of $\ul u$ and $\ol u^\e$, we end up with a
subsolution $\ul u_\infty$ and a supersolution $\ol u^\e_\infty$ to some 
limiting equation~\eqref{le} in $\R_-\times\O$ satisfying
$$\ol u^\e_\infty(t_\e,\rho_\e e)=\ul u_\infty(t_\e,\rho_\e e),\qquad \forall 
t\leq t_\e,\ x\in\R^N,\quad
\ol u^\e_\infty(t,x)\geq\ul u_\infty(t,x).$$
It then follows from the strong comparison principle that $\ol
u^\e_\infty=\ul u_\infty$ for $t\leq t_\e$, $x\in\O$, which is
impossible because, by \eqref{olu>}, $\ol u^\e_\infty(t,x)>1$ if
$-x\.e$ is large enough, while $\ul u_\infty\leq1$.

{\em Case 3) \  $\sup_{\e\in(0,\e_0)}\rho_\e=+\infty$.}\\
This case is ruled out when $\ul u$ satisfies \eqref{ulu<mono}  because, in 
such case, \eqref{rho} yields $\rho_\e<\gamma t_\e+L<L$. Then, suppose that 
$f$ satisfies \eqref{others} and that $\ul u$ satisfies \eqref{ulu<others}.
By the latter, there is
$\e\in(0,\e_0)$ for which $\rho_\e$ is sufficiently 
large to have 
$$\ul u(t,x)\leq\theta\quad\text{for  }\ t\leq0,\ 
x\in\O:=\{x\,:\,x\.e>\rho_\e-1\},$$
where $\theta$ is from assumption \eqref{others}. It follows from that 
assumption that the function $\ul 
u^\e:=\ul u-\e$ is a subsolution of \eqref{past-gen} for $x\in\O$.
Moreover, $\ol u\geq\ul u^\e$ if $t\leq t_\e$ and, by~\eqref{rho},
$\inf_{x\.e=\rho_\e}(\ol u-\ul u_\e)(t_\e,x)=0$.
Arguing as in the case 2, one finds that the limits $\ol 
u_\infty$, $\ul u^\e_\infty$ of some sequences of 
translations of $\ol u$, $\ul u^\e$ by vectors orthogonal to $e$ coincide for
$t\leq t_\e$, which is impossible because $\ul u^\e<0$ if $x\.e$ is large 
enough.
\end{proof}



\begin{thebibliography}{1}


\bibitem{AW}
Aronson, D.~G.; Weinberger, H.~F. Multidimensional nonlinear diffusion arising
  in population genetics. \emph{Adv. in Math.} \textbf{30} (1978), no.~1,
  33--76.
 
\bibitem{BSShom}
G.~Barles, H.~M. Soner, and P.~E. Souganidis.
\newblock Front propagation and phase field theory.
\newblock {\em SIAM J. Control Optim.}, 31(2):439--469, 1993.

\bibitem{BShom}
G.~Barles and P.~E. Souganidis.
\newblock A new approach to front propagation problems: theory and
  applications.
\newblock {\em Arch. Rational Mech. Anal.}, 141(3):237--296, 1998.

\bibitem{pulsating}
Berestycki, H.; Hamel, F. Front propagation in periodic excitable media.
  \emph{Comm. Pure Appl. Math.} \textbf{55} (2002), no.~8, 949--1032.

\bibitem{BHNadin}
Berestycki, H.; Hamel, F.; Nadin, G. Asymptotic spreading in heterogeneous
  diffusive excitable media. \emph{J. Funct. Anal.} \textbf{255} (2008), no.~9,
  2146--2189.

\bibitem{BHN}
Berestycki, H.; Hamel, F.; Nadirashvili, N. The speed of propagation for {KPP}
  type problems. {I}. {P}eriodic framework. \emph{J. Eur. Math. Soc. (JEMS)}
  \textbf{7} (2005), no.~2, 173--213.

\bibitem{BHRoques1}
Berestycki, H.; Hamel, F.; Roques, L. Analysis of the periodically fragmented
  environment model. {I}. {S}pecies persistence. \emph{J. Math. Biol.}
  \textbf{51} (2005), no.~1, 75--113.

\bibitem{BHR}
Berestycki, H.; Hamel, F.; Rossi, L. Liouville-type results for semilinear
  elliptic equations in unbounded domains. \emph{Ann. Mat. Pura Appl. (4)}
  \textbf{186} (2007), no.~3, 469--507.

\bibitem{BNuni}
Berestycki, H.; Nadin, G. Spreading speeds for one-dimensional monostable
  reaction-diffusion equations. \emph{J. Math. Phys.} \textbf{53} (2012),
  no.~11, 115\,619, 23.

\bibitem{BNmultiD}
Berestycki, H.; Nadin, G, Asymptotic spreading for general 
heterogeneous Fisher-KPP type equations. \emph{In preparation.}

\bibitem{BN92}
Berestycki, H.; Nirenberg, L. Travelling fronts in cylinders. \emph{Ann. Inst.
  H. Poincar{\'e} Anal. Non Lin{\'e}aire} \textbf{9} (1992), no.~5, 497--572.

\bibitem{TerraceN}
Du, Y.; Matano, H. \emph{In preparation}

\bibitem{Terrace}
Ducrot, A.; Giletti, T.; Matano, H. Existence and convergence to a propagating
terrace in one-dimensional reaction-diffusion equations. \emph{Trans. Amer.
	Math. Soc.} \textbf{366} (2014), no.~10, 5541--5566.

\bibitem{ES}
Evans, L.~C.; Souganidis, P.~E. A {PDE} approach to geometric optics for
  certain semilinear parabolic equations. \emph{Indiana Univ. Math. J.}
  \textbf{38} (1989), no.~1, 141--172.

\bibitem{Freidlin}
Freidlin, M.~I. On wavefront propagation in periodic media. in \emph{Stochastic
  analysis and applications}, \emph{Adv. Probab. Related Topics}, vol.~7, pp.
  147--166, Dekker, New York, 1984.

\bibitem{Friedman}
Friedman, A. \emph{Partial differential equations of parabolic type},
  Prentice-Hall, Inc., Englewood Cliffs, N.J., 1964.

\bibitem{GGN}
Garnier, J.; Giletti, T.; Nadin, G. Maximal and minimal spreading speeds for
  reaction diffusion equations in nonperiodic slowly varying media. \emph{J.
  Dynam. Differential Equations} \textbf{24} (2012), no.~3, 521--538.

\bibitem{Gartner}
G{\"a}rtner, J. Location of wave fronts for the multidimensional {KPP} equation
and {B}rownian first exit densities. \emph{Math. Nachr.} \textbf{105} (1982),
317--351.

\bibitem{FG}
G\"artner, J.; Fre{\u\i}dlin, M.~I. The propagation of concentration waves in
  periodic and random media. \emph{Dokl. Akad. Nauk SSSR} \textbf{249} (1979),
  no.~3, 521--525.

\bibitem{HR1} Hamel, F.; Rossi, L. Transition fronts for the Fisher-KPP 
equation. \emph{To appear on Trans. Amer. Math. Soc.} 
(preprint: http://arxiv.org/abs/1404.2821).

%

\bibitem{Nadin}
Nadin, G. Traveling fronts in space-time periodic media. \emph{J. Math. Pures
  Appl. (9)} \textbf{92} (2009), no.~3, 232--262.

\bibitem{NR1}
Nadin, G.; Rossi, L. Propagation phenomena for time heterogeneous {KPP}
reaction-diffusion equations. \emph{J. Math. Pures Appl. (9)} \textbf{98}
(2012), no.~6, 633--653.

\bibitem{NX09}
Nolen, J.; Xin, J. Asymptotic spreading of {KPP} reactive fronts in
  incompressible space-time random flows. \emph{Ann. Inst. H. Poincar\'e Anal.
  Non Lin\'eaire} \textbf{26} (2009), no.~3, 815--839.

\bibitem{Terracik} Pol{\'a}{\v{c}}ik, P. Propagating terraces and the dynamics of front-like solutions 
of reaction-diffusion equations on $\R$. \emph{Preprint}  (2016)
(http://www.math.umn.edu/~polacik/Publications/conv-prop-terr.pdf).

\bibitem{RR}
Rossi, L.; Ryzhik, L. Transition waves for a class of space-time dependent
  monostable equations. \emph{Commun. Math. Sci.} \textbf{12} (2014), no.~5,
  879--900.

\bibitem{Shen-bi2}
Shen, W. Travelling waves in time almost periodic structures governed by
bistable nonlinearities. {II}. {E}xistence. \emph{J. Differential Equations}
\textbf{159} (1999), no.~1, 55--101.

\bibitem{Shen-mono}
Shen, W. Existence, uniqueness, and stability of generalized traveling waves in
time dependent monostable equations. \emph{J. Dynam. Differential Equations}
\textbf{23} (2011), no.~1, 1--44.

\bibitem{Shen-comb-ap}
Shen, W.; Zhongwei, S. Stability, uniqueness and recurrence of generalized
traveling waves in time heterogeneous media of ignition type. \emph{To appear
	on Trans. Amer. Math. Soc.}  (2016).

\bibitem{Uchi-bi}
Uchiyama, K. Asymptotic behavior of solutions of reaction-diffusion equations
  with varying drift coefficients. \emph{Arch. Rational Mech. Anal.}
  \textbf{90} (1985), no.~4, 291--311.
  
\bibitem{W02}
Weinberger, H.~F. On spreading speeds and traveling waves for growth and
  migration models in a periodic habitat. \emph{J. Math. Biol.} \textbf{45}
  (2002), no.~6, 511--548.

\bibitem{Xin91}
Xin, X. Existence and stability of traveling waves in periodic media governed
  by a bistable nonlinearity. \emph{J. Dynam. Differential Equations}
  \textbf{3} (1991), no.~4, 541--573.

\bibitem{Xin93}
Xin, J.~X. Existence and nonexistence of traveling waves and reaction-diffusion
  front propagation in periodic media. \emph{J. Statist. Phys.} \textbf{73}
  (1993), no. 5-6, 893--926.

\bibitem{Xin00}
Xin, J. Front propagation in heterogeneous media. \emph{SIAM Rev.} \textbf{42}
  (2000), no.~2, 161--230.

\end{thebibliography}
\end{document}